\newtheorem{theorem}{Theorem}
\newtheorem{lemma}{Lemma}
\newtheorem{proposition}{Proposition}
\newtheorem{assumption}{Assumption}
\newtheorem{remark}{Remark}
\journal{Journal of Computational and Applied Mathematics}
\begin{document}

\begin{frontmatter}

\title{Asynchronous Stochastic Block Projection Algorithm for Solving Linear Systems under Predefined Communication Patterns\tnoteref{fund}}

\author[sdust]{Yanchen~Yin}
\author[sdust]{Yongli~Wang\corref{cor1}}

\cortext[cor1]{Corresponding author. Email: \texttt{wangyongli@sdkd.net.cn}}

\tnotetext[fund]{This work was supported by the National Natural Science Foundation of China Joint Fund Key Project under Grant U22B2049.}

\address[sdust]{College of Mathematics and Systems Science,\\
                Shandong University of Science and Technology,\\
                Qingdao 266590, China}

\begin{abstract}
This paper proposes an event-triggered asynchronous distributed randomized block Kaczmarz projection (ER-AD-RBKP) algorithm for efficiently solving large-scale linear systems in resource-constrained and communication-unstable environments. The algorithm enables each agent to update its local state estimate independently and engage in communication only when specific triggering conditions are satisfied, thereby significantly reducing communication overhead. At each iteration, agents perform projections using randomly selected partial local data blocks to lower per-iteration computational costs and enhance scalability. By defining events that ensure strong connectivity in the communication graph, we derive the sufficient conditions for global convergence under a probabilistic framework, proving that the algorithm converges exponentially in expectation as long as no extreme events (e.g., permanent agent disconnection) occur. Besides, for inconsistent systems, auxiliary variables are incorporated to transform the problem into an equivalent consistent formulation, and theoretical error bounds are derived. Moreover, we implement the ER-AD-RBKP algorithm in an asynchronous communication environment built on ROS2, a distributed middleware framework for real-time robotic systems. We evaluate the algorithm under various settings, including varying numbers of agents, neighborhood sizes, communication intervals, and failure scenarios such as communication disruptions and processing faults. Experimental results demonstrate the robust performance of the proposed algorithm in terms of computational efficiency, communication cost, and system resilience, highlighting its strong potential for practical applicability in real-world distributed systems. 
\end{abstract}

\begin{keyword}
Asynchronous Distributed \sep Random Block Projection \sep Event-Triggered Communication \sep Communication Overhead
\end{keyword}

\end{frontmatter}


\section{introduction}
Distributed computing technologies have attracted growing interest, with researchers developing distributed algorithms to solve large-scale computational problems. These algorithms have been applied in various fields, including optimization~\cite{Yang2019A}, deep learning~\cite{10.1145/3320060, 10.1145/3363554}, robotic systems~\cite{halsted2021surveydistributedoptimizationmethods}, and power systems~\cite{7990560}. Among these applications, solving linear systems of the form $Ax = b$ remains one of the most fundamental and widely studied problems. Traditional methods such as the Jacobi method, Gauss-Seidel method, Conjugate Gradient (CG), and Multigrid method often perform poorly on large-scale systems. This has prompted researchers to explore distributed approaches for tackling such problems.

A commonly adopted strategy in decentralized distributed settings is row-wise partitioning. Given a matrix $A \in \mathbb{R}^{m \times n}$ and a vector $b \in \mathbb{R}^m$, the global system $Ax = b$ is decomposed into $N$ local subproblems $A_i x = b_i$, each handled by an individual agent. Each agent $i$ holds a local dataset $\begin{pmatrix}A_i & b_i\end{pmatrix}$, with the aggregated data expressed as
\[
\begin{pmatrix}A & b\end{pmatrix} = \begin{pmatrix}A_1 & b_1 \\ \cdots & \cdots \\ A_N & b_N\end{pmatrix}.
\]
Each agent maintains an $n$-dimensional local state vector $x_i$ as its estimate of the global solution. Through information exchange with neighboring agents—according to a predefined communication topology—each agent integrates received states with its own and updates $x_i$ via a customized projection-based iterative scheme based on local data. The goal is to ensure that all agents’ local state vectors converge to a solution of the global system $Ax = b$.

Numerous studies have adopted the row-partitioned strategy to address the distributed solution of large-scale linear systems. A series of works by Liu and Mou systematically established the theoretical foundation of projection-based distributed algorithms for linear systems, covering convergence under fixed communication graphs~\cite{Mou2013A,Mou2013B,Liu2013An}, necessary and sufficient conditions for exponential convergence and time-varying graph analysis~\cite{Liu2014Stability,Mou2015A,Mou2015B}, and unified continuous-discrete modeling~\cite{Liu2016A}. Subsequent research extended this framework in multiple directions.For example, Wang et al. improved the framework in terms of initialization and parameter design, discussing convergence to the minimum-norm solution~\cite{Wang2016A,Wang2017Improvement,Wang2017A}; Hu et al. utilized historical iteration information to achieve acceleration~\cite{Hu2020Accelerated}; Gade et al. proposed a finite-time algorithm from a privacy perspective~\cite{Gade2020A}; Yi et al. extended the approach to stochastic communication graphs~\cite{Yi2020Distributed}; Zhu et al. investigated Byzantine fault tolerance\cite{Zhu2023A}; while Alaviani et al. introduced randomized Krasnoselskii–Mann iteration for adaptation to stochastic networks~\cite{Alaviani2018A,9144422}. Different from projection-based distributed algorithms, non-projection-based continuous or discrete gradient-type methods derive convergence and rate results through Lyapunov analysis~\cite{Yang2015A,8028633,Huang2021Distributed}, with extensions to bilevel networks, sparse information scenarios~\cite{Huang2024Distributed,Wang2020Scalable}, and others.

The vast majority of the aforementioned works are based on synchronous distributed algorithms. However, in the distributed solving of large-scale linear systems, communication processes involve transmitting substantial data among agents. To maintain synchronization, agents often must wait for communication to complete, resulting in significant idle time due to communication latency. To reduce communication overhead,~\cite{7943430,7587861,Yin2020Securely,Liu2020Communication-Efficient} employed partial data transmission to minimize waiting time, while~\cite{Wang2018Communication-efficient} proposed a low-communication-cost algorithm based on gradient descent and approximate Newton methods for linear systems with Laplacian sparse structures, achieving faster convergence. Nevertheless, these works still rely on synchronous distributed algorithms.

To address the communication latency in synchronous algorithms, researchers have begun investigating asynchronous distributed algorithms for solving large-scale linear systems and achieved notable progress. Asynchronous methods allow processors to proceed without waiting for updates from others, reducing idle time and enhancing scalability~\cite{10.1145/2814566, Liu2014AnAP}. Some algorithms have been developed for general symmetric positive-definite matrices~\cite{10.1145/2814566} and consistent linear systems~\cite{Liu2018Asynchronous, SAHU20231}. Randomized approaches, such as asynchronous block Kaczmarz and coordinate descent methods, have been proven to achieve linear convergence rates with near-linear speedup as the number of processors increases~\cite{Liu2014AnAP, doi:10.1137/19M1251643}. Compared to their synchronous counterparts, accelerated versions of these asynchronous algorithms attain optimal complexity and faster convergence ~\cite{hannah2018a2bcdasynchronousacceleratedblock, Lee2013EfficientAC}. These advancements have improved solvers for various linear algebra problems, including symmetric diagonally dominant systems and overdetermined equations~\cite{Lee2013EfficientAC}. Overall, asynchronous algorithms demonstrate significant potential for efficient distributed solutions to large-scale linear systems.

Despite this, existing asynchronous distributed algorithms still face two key challenges in practical applications. First, when the problem dimension n is large, the full-data projection operations required in each iteration lead to a computational complexity of $O(n^2)$, making it difficult to meet high-frequency update demands even with preprocessing.
Second, many methods adopt an “iterate-then-broadcast” mechanism, generating redundant information exchange that increases network load in large-scale systems, potentially undermining the parallel advantages of asynchrony.

To address these issues, this paper proposes an event-triggered asynchronous distributed randomized block Kaczmarz projection algorithm. The key contributions are as follows:

 Firstly, a Randomized block projection strategy is introduced, where each agent computes its updates using only partial local data per iteration, significantly improving computational efficiency and making the algorithm more suitable for industrial devices with limited computing power. Through rigorous theoretical analysis, the exponential convergence of the algorithm is proved under appropriate conditions for consistent systems.
  
 Secondly, an event-triggered communication mechanism is proposed and agents exchange information only when specific conditions are met, alleviating the system burden caused by high-frequency communication and enhancing overall communication efficiency. 
  
 Thirdly, for inconsistent systems, the problem is reformulated as a consistent augmented system by introducing auxiliary variables. This transformation introduces no additional communication or computational overhead and preserves the convergence properties of the original asynchronous framework, ensuring robustness in broader problem settings. 
  
Furthermore,  a theoretical analysis is conducted on the approximation error between the solution of the augmented system and the true least-squares solution. The quantitative upper bound is derived as a function of the system condition number and the design parameter. With appropriate parameter tuning, the error can be tightly controlled, and experimental results validate the theoretical estimates.
  
 Finally, the algorithm is deployed asynchronously on the ROS2 platform and extensively tested under various configurations, including varying agent numbers, neighbor sizes, communication intervals, and node failure scenarios. The numerical results confirm its practicality for real-world deployment in distributed environments, demonstrating robust and scalable performance across key metrics. 
  

In this paper, all vectors are assumed to be column vectors unless otherwise specified. Let $n$ and $m$ be arbitrary integers. We denote $\mathbb{R}^n$ as the $n$-dimensional Euclidean space, and $\mathbb{R}^{m \times n}$ as the space of real $m \times n$ matrices. Unless stated otherwise, $\|\cdot\|_2$ and $\|\cdot\|_\infty$ denote the standard vector 2-norm and infinity norm, respectively, or their corresponding matrix operator norms. The symbol $\otimes$ denotes the Kronecker product.

For a matrix $A \in \mathbb{R}^{m \times n}$,  $A^T$ denotes its transpose, and $A^\dagger$ the Moore–Penrose pseudoinverse of $A$. Let $\{\alpha_1^T, \cdots, \alpha_m^T\}$ be the set of row vectors of $A$, and define the row index set as $\mathbf{m} = \{1, 2, \cdots, m\}$. The row space of $A$ is defined as $\text{Row}(A) = \text{span}\{\alpha_1, \cdots, \alpha_m\}$, and its orthogonal complement is denoted by $\text{Row}_\perp(A)$, which corresponds to the null space of $A$ under the standard inner product.

For distributed processing, we partition $A$ into $N$ row blocks with $A_i = \begin{bmatrix} \alpha_{m_{i-1}+1}, \ \cdots, \ \alpha_{m_i} \end{bmatrix}^T$, $i=1,2,\dots,N$. The corresponding row index subset is defined as $\mathbf{m}_i = \{m_{i-1}+1, \cdots, m_i\}$, such that $\overset{N}{\bigcup\limits_{i=1}} \mathbf{m}_i = \mathbf{m}$.

The remainder of the paper is organized as follows. Section I introduces the problem. Section II describes the proposed algorithm. Section III presents the main theoretical results. Section IV reports numerical evaluations. All proofs are provided in the Appendix.

\section{algorithm}

This section first presents the asynchronous distributed randomized block projection algorithms for both consistent and inconsistent linear systems, followed by a discussion on communication cost and communication strategies in asynchronous settings.

We begin by introducing the basic notation. Let $\{x_i(t_{i,k})\}_{k=0}^{+\infty}$ denote the solution estimate sequence of agent $i$, where $t_{i,k}$ is the time at which agent $i$ completes its $k$-th local iteration. Here, $x_i(t_{i,k})$ is the corresponding solution estimate, and $x_i(t_{i,0})$ is the initial estimate. To capture asynchronous inter-agent information exchange, the solution estimate of agent $i$ is modeled as a piecewise constant function of time:
\[
x_i(t) = x_i(t_{i,k}), \quad \text{for } t \in \left[t_{i,k}, t_{i,k+1}\right).
\]

\subsection{The Consistent Case}

According to~\cite{Liu2018Asynchronous}, when the linear system $Ax = b$ is consistent, the asynchronous distributed projection algorithm can be represented as a discrete-time system with bounded random delays:
\begin{equation}
\begin{aligned}
& A_i x_i(t_{i,0}) = b_i,\\
& w_i(t_{i,k}) = \sum_{j \in \mathcal{N}_i(t_{i,k})} x_j\left(t_{i,k} - \delta_{j,i}(t_{i,k})\right),\\
& x_i(t_{i,k+1}) = x_i(t_{i,k}) - P_i\left(x_i(t_{i,k}) - \frac{1}{d_i(t_{i,k})} w_i(t_{i,k})\right),
\end{aligned}
\label{eq:initial-iteration}
\end{equation}
where $P_i$ denotes the projection onto the null space of $A_i$, i.e., $P_i = I_n - A_i^\dagger A_i$, as defined in~\cite{Liu2018Asynchronous} and commonly used in projection-based iterative methods. The set $\mathcal{N}_i(t_{i,k})$ represents the neighborhood of agent $i$ at time $t_{i,k}$; agent $j$ belongs to this set if agent $i$ uses its information at iteration $k+1$.

The delay $\delta_{j,i}(t_{i,k})$ accounts for the time lag in receiving agent $j$’s information. If agent $i$ uses $x_j(t_{j,h_{j,i}^k})$ from agent $j$’s $h_{j,i}^k$-th iteration, then $\delta_{j,i}(t_{i,k}) = t_{i,k} - t_{j,h_{j,i}^k}$. The term $d_i(t_{i,k}) = |\mathcal{N}_i(t_{i,k})|$ denotes the number of neighbors of agent $i$ at time $t_{i,k}$.

Note that each agent includes its own estimate in the neighborhood set, i.e., $i \in \mathcal{N}_i(t_{i,k})$ and $t_{i,h_{i,i}^k} = t_{i,k}$, which implies $\delta_{i,i}(t_{i,k}) = 0$. If agent $j$ has transmitted multiple updates to agent $i$ before iteration $k+1$, agent $i$ selects only the most recent estimate received from agent $j$.

When the problem dimension $n$ is large, the per-iteration computational complexity remains $O(n^2)$ even if the projection operator $P_i$ is precomputed in advance. To remove the initialization constraint on $x_i(t_{i,0})$, we reformulate the update into a block Kaczmarz-style iteration based on $A_i$:
\begin{equation}
x_i(t_{i,k+1}) = w_i(t_{i,k}) + A_i^\dagger \left( b_i - A_i w_i(t_{i,k}) \right).
\label{eq:modified-iteration}
\end{equation}
With $A_i^\dagger$ precomputed, the per-iteration computational cost for agent $i$ is reduced to $O(m_i \, n)$. To further improve efficiency, a smaller $m_i$ is desired. However, a large total row dimension $m$ then necessitates a larger number of agents $N$, which may introduce heavier communication loads during distributed deployment.

To address this issue at the algorithmic level, we adopt a randomized block Kaczmarz approach. Specifically, at each iteration, only a randomly selected subset of rows from $\big(A_i, \ b_i\big)$ is used for computation. Let $\left( A_i(t_{i,k}),\ b_i(t_{i,k}) \right)$ denote the selected sub-block at iteration $k$. The update rule in~\eqref{eq:modified-iteration} is thus modified as:
\begin{equation}
x_i(t_{i,k+1}) = w_i(t_{i,k}) + A_i^\dagger(t_{i,k}) \left( b_i(t_{i,k}) - A_i(t_{i,k}) w_i(t_{i,k}) \right).
\label{eq:consistent-update}
\end{equation}
Let $m_i(t_{i,k})$ denote the row size of the random sub-block. This update reduces the computational complexity to $O\left(m_i(t_{i,k}) \, n\right)$, significantly lowering both per-iteration computation and the scale of communication. The method is especially suitable for scenarios where $A_i$ is generated dynamically or accessed incrementally. The convergence of~\eqref{eq:consistent-update} will be formally established in \textbf{Theorem 2}.

To facilitate theoretical analysis, the iteration in~\eqref{eq:consistent-update} can be reformulated as:
\begin{equation}
x_i(t_{i,k+1}) = P_i(t_{i,k}) \left( \frac{1}{d_i(t_{i,k})} w_i(t_{i,k}) \right) + A_i^\dagger(t_{i,k}) b_i(t_{i,k}),
\label{eq:consistent-theory-form}
\end{equation}
where $P_i(t_{i,k})=I_n - A_i^\dagger(t_{i,k}) A_i(t_{i,k})$.

\subsection{The Inconsistent Case}

Let $x^* = A^\dagger b$ denote the exact least-squares solution to the inconsistent system $Ax=b$. Starting from the update rule in~\eqref{eq:consistent-update}, we derive:
\begin{equation}
\begin{aligned}
& x_i(t_{i,k+1}) - x^* =  \frac{1}{d_i(t_{i,k})}w_i(t_{i,k}) - x^*  - A_i^\dagger(t_{i,k}) A_i(t_{i,k})\left( \frac{1}{d_i(t_{i,k})}w_i(t_{i,k}) - x^* \right) \\
& + A_i^\dagger(t_{i,k}) b_i(t_{i,k}) - A_i^\dagger(t_{i,k}) A_i(t_{i,k}) x^*  = P_i(t_{i,k}) \left( \frac{1}{d_i(t_{i,k})}w_i(t_{i,k}) - x^* \right) + \epsilon_i(t_{i,k}),
\end{aligned}
\label{eq:system-dynamics}
\end{equation}
where  $\epsilon_i(t_{i,k}) = A_i^\dagger(t_{i,k})\left( b_i(t_{i,k}) - A_i(t_{i,k}) x^* \right)$ is defined as the disturbance term.

If the system $Ax = b$ is consistent, then $\epsilon_i(t_{i,k}) = 0$, and \eqref{eq:system-dynamics} reduces to a time-varying discrete-time linear system with delay. In the Appendix, we show that this system can be equivalently reformulated as a delay-free linear system, enabling convergence analysis via the spectral radius of the corresponding state transition matrix.

However, if $Ax = b$ is inconsistent, then there exists at least one $\epsilon_i(t_{i,k}) \ne 0$, which introduces a persistent disturbance into~\eqref{eq:system-dynamics}. Consequently, the delay-free reformulation derived from~\eqref{eq:consistent-update} becomes a disturbed linear system, leading to oscillatory behavior and degraded convergence performance.

To eliminate the disturbance term $\epsilon_i(t_{i,k})$, we can reformulate the original system $Ax = b$, by introducing an auxiliary vector $y \in \mathbb{R}^m$, as:  
\begin{equation}
\begin{pmatrix} A & \Lambda \end{pmatrix}
\begin{pmatrix} x \\  \\ y \end{pmatrix} = b,
\label{eq:augmented-system}
\end{equation}
where $\Lambda = \lambda I_m$ is a tunable scaling matrix with design parameter $\lambda > 0$.

For any index subset $\mathcal{J} \subseteq \mathbf{m}$, we define $E_{\mathcal{J}} \in \mathbb{R}^{m_i(t_{i,k}) \times m}$ as a row selector matrix that has the identity submatrix $I_{\mathcal{J}}$ in the rows indexed by $\mathcal{J}$ and zeros elsewhere. Let $A’ = \begin{pmatrix} A & \lambda I_m \end{pmatrix}$, the distributed algorithm is then tasked with solving the consistent augmented system in~\eqref{eq:augmented-system}.

In this case, each agent $i$ is assigned an augmented local matrix $A_i’ = \begin{pmatrix} A_i & \lambda E_{\mathbf{m}_i} \end{pmatrix}$ with $b_i \in \mathbb{R}^{m_i}$ and the iteration framework follows the same structure as in~\eqref{eq:consistent-update}. At the $k$-th iteration, agent $i$ randomly selects a sub-block $\left( A’_i(t_{i,k}),\ b_i(t_{i,k}) \right)$, where $A’_i(t_{i,k}) \in \mathbb{R}^{m_i(t_{i,k}) \times (n+m)}$.

Let $w_i^z(t_{i,k}) = \frac{1}{d_i(t_{i,k})} \sum_{j \in \mathcal{N}_i(t_{i,k})} z_j\left( t_{i,k} - \delta_{j,i}(t_{i,k}) \right)$ denote the average of the neighbor state vectors, where $z_i(t) \in \mathbb{R}^{n+m}$ represents the augmented state. The asynchronous randomized block update for the augmented system is then given by:
\begin{equation}
z_i(t_{i,k+1}) = w_i^z(t_{i,k}) + {A_i’}^\dagger(t_{i,k}) \left( b_i(t_{i,k}) - A_i’(t_{i,k}) w_i^z(t_{i,k}) \right).
\label{eq:augmented-kaczmarz}
\end{equation}

Equation~\eqref{eq:augmented-kaczmarz} can be further simplified. Let $A_i’(t_{i,k}) = \left( A_i(t_{i,k}),\ \lambda E_{\mathcal{J}(t_{i,k})} \right)$, where $\mathcal{J}(t_{i,k}) \subseteq \mathbf{m}_i$ is the row index subset selected by agent $i$ at iteration $k+1$. The definition of $A_i(t_{i,k})$ remains consistent with that in the consistent case.

Let $z_i(t_{i,k}) = \begin{pmatrix} x_i(t_{i,k})^\top,\ y_i(t_{i,k})^\top \end{pmatrix}^\top$ denote the augmented state, and $y_{i,\mathcal{J}(t_{i,k})}(t_{i,k})$ represent the components of $y_i(t_{i,k})$ corresponding to indices in $\mathcal{J}(t_{i,k})$. Define the following quantities:
\[
\begin{aligned}
r_i(t_{i,k}) &= b_i(t_{i,k}) - A_i(t_{i,k}) w_i(t_{i,k}) - \lambda\, y_{i,\mathcal{J}(t_{i,k})}(t_{i,k}), \\
\alpha_i(t_{i,k}) &= \left( A_i(t_{i,k}) A_i(t_{i,k})^{\top} + \lambda^2 I_{m_i(t_{i,k})} \right)^{-1} r_i(t_{i,k}).
\end{aligned}
\]
Since ${A_i’}^\dagger(t_{i,k}) = A_i’(t_{i,k})^\top \left( A_i’(t_{i,k}) A_i’(t_{i,k})^\top \right)^{-1}$ and $A_i’(t_{i,k}) = \left( A_i(t_{i,k}),\ \lambda E_{\mathcal{J}(t_{i,k})} \right)$, the structure of the augmentation implies that each update involves only the components of $y_i$ indexed by $\mathbf{m}_i$.

In other words, even if all agents synchronize and aggregate their $y_i$ components during communication—as is done for $w_i(t_{i,k})$—the actual computation at each agent only requires the local subset $y_{i,\mathcal{J}(t_{i,k})}$. Therefore, $y_i$ need not be communicated across agents. In practice, the portions of $y_i$ outside of $\mathbf{m}_i$ are never involved in agent $i$’s updates. Consequently, agent $i$ only needs to maintain a local vector $y_i$ of length $m_i$, storing the relevant subset of the global variable $y$.

Based on the above discussion, Equation~\eqref{eq:augmented-kaczmarz} can be simplified into the following explicit iteration involving only $x_i(t_{i,k})$ and $y_i(t_{i,k})$:
\begin{equation}
\begin{aligned}
&r_i(t_{i,k}) = b_i(t_{i,k}) - A_i(t_{i,k}) w_i(t_{i,k}) - \lambda y_{i,\mathcal{J}(t_{i,k})}(t_{i,k}), \\
&\alpha_i(t_{i,k}) = \left( A_i(t_{i,k}) A_i(t_{i,k})^\top + \lambda^2 I_{m_i(t_{i,k})} \right)^{-1} r_i(t_{i,k}), \\
&x_i(t_{i,k+1}) = w_i(t_{i,k}) + A_i(t_{i,k})^\top \alpha_i(t_{i,k}), \\
&y_{i,\mathcal{J}(t_{i,k})}(t_{i,k+1}) = y_{i,\mathcal{J}(t_{i,k})}(t_{i,k}) + \lambda\alpha_i(t_{i,k}).
\end{aligned}
\label{eq:inconsistent-update-explicit}
\end{equation}

If the matrices $\left( A_i(t_{i,k}) A_i(t_{i,k})^\top + \lambda^2 I_{m_i(t_{i,k})} \right)^{-1}$ are precomputed and stored, the per-iteration computational complexity remains $O(m_i(t_{i,k}) n)$, without incurring any additional communication overhead.

Since the augmented system in~\eqref{eq:augmented-system} is consistent, the convergence guarantees established for~\eqref{eq:consistent-update} can directly extend to the explicit form in ~\eqref{eq:inconsistent-update-explicit}. The convergence result and a quantitative error bound for~\eqref{eq:inconsistent-update-explicit} will be given in  \textbf{Theorem 3}, where the approximation gap between the solution $\tilde{x}^*$ obtained from the augmented system and the true least-squares solution $x^*$  is discussed. 
\subsection{The Communication Strategy}

We now proceeds to discuss the communication challenges in distributed algorithms. Within distributed algorithmic frameworks, inter-agent communication is critical to ensure convergence toward the solution of the target equation.In synchronous parallel algorithms, agents must wait  until all communication is completed before proceeding to the next iteration. As the system scales, communication delays often exceed the computation time per agent, signiffcantly hindering overall efffciency. 

Asynchronous distributed algorithms break this limitation by allowing agents to proceed independently.However, empirical studies reveal that large-scale asynchronous implementations still necessitate substantial information exchange for convergence. While intensive communication guarantees algorithmic convergence, excessive communication overhead imposes significant system burdens, adversely affecting computational performance while creating unnecessary resource expenditure. For instance, Reference \cite{Liu2018Asynchronous} employs a strategy where agents broadcast updates to neighbors after every iteration. Yet such dense communication often fail to be utilized effectively by neighboring agents.

This inefficiency becomes more pronounced when communication lacks a proper control mechanism. During iteration, agent $i$ may receive multiple updates from agent $j$. As illustrated in Fig.~\ref{fig:1}, if each iteration instance of an agent is represented as a node, then node $\mathbf{A}$ aggregates information from others and itself and forwards the result to subsequent nodes $\mathbf{B}$ and $\mathbf{C}$. Node $\mathbf{B}$ further integrates data from both $\mathbf{A}$ and itself, then sends the result to $\mathbf{C}$. Clearly, the update from $\mathbf{B} \to \mathbf{C}$ carries richer information than that from $\mathbf{A} \to \mathbf{C}$, rendering the latter redundant.

Without a carefully designed communication strategy, such redundant updates may consume significant bandwidth and processing capacity, thereby degrading computational efficiency. This issue is especially prominent in large-scale distributed systems, where an efficient communication rule is essential to mitigate unnecessary transmission and reduce system load.

\begin{figure}[H]
\centering
\includegraphics[width=0.3\textwidth, height=6cm]{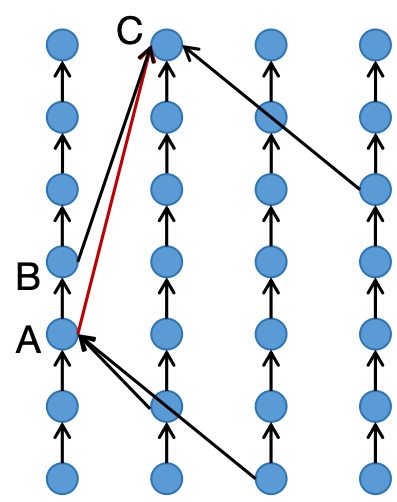}
\caption{Illustration of inter-node communication relationships.}
\label{fig:1}
\end{figure}

Building on this observation, we propose a communication strategy based on a predefined event set. Let $\mathcal{N}(t)$ denote the set of agents that initiate communication at time $t$, and let $T_k$ be the $k$-th time at which $\mathcal{N}(t) \neq \emptyset$. We define the set of communication event times as $\mathcal{T}_S = \{ T_k \}_{k=1}^{+\infty}$, where communication is triggered at each $T_k$. By designing the structure of $\mathcal{T}_S$, we achieve effective regulation of the communication process.

This event-triggered communication mechanism can be analyzed using techniques from~\cite{Liu2018Asynchronous}. We first introduce the following assumption:

\begin{assumption} There exist positive constants $\overline{T}_i$ and $T_i$ such that for all $k \geq 1$ and for all inter-agent communication delays $\Delta t$, the following inequalities hold:
\begin{equation}
\begin{aligned}
\overline{T}_i \geq &\ t_{i,k} - t_{i,k-1} \geq T_i, \\
&\Delta t \leq \Delta T.
\end{aligned}
\label{eq:communication-assumption}
\end{equation}
\end{assumption}

This assumption is reasonable: the lower bound $T_i$ reflects the minimal update frequency that each agent can achieve in practice, while the upper bound $\overline{T}_i$ guarantees that no agent becomes indefinitely inactive. From ~\eqref{eq:communication-assumption}, it follows that all communication delays $\delta_{j,i}(t_{i,k})$ are uniformly bounded.

Let $\overline{T} = \max\limits_i \left\{ \overline{T}_i \right\}$. If agent $i$ receives information from agent $j$ at time $\tau$, the delay can be expressed as:
\[\delta_{j,i}(t_{i,k}) = t_{i,k} - \tau + \tau - t_{j,h_{j,i}^k}.\]
Since $\tau \in [t_{i,k-1}, t_{i,k})$, it follows that $t_{i,k} - \tau \leq \overline{T}$. Furthermore, by \textbf{Assumption 1}, we have $\tau - t_{j,h_{j,i}^k} \leq \Delta T$. Hence, we obtain the following uniform bound on communication delay:
\[\delta_{j,i}(t_{i,k}) \leq \Delta T + \overline{T}.\]

Based on \textbf{Assumption 1}, a basic design principle for constructing the event-triggered communication set $\mathcal{T}_S$ can be obtained.

\begin{theorem} Let $\Delta T_{\text{broadcast}} = \min\limits_{k \geq 1} (T_k - T_{k-1})$ denote the minimal time interval between consecutive communication events. If
\begin{equation}
\Delta T_{\text{broadcast}} \geq 2\Delta T + \overline{T},
\label{eq:communication-design}
\end{equation}
then the redundant communication scenario described in Fig.~\ref{fig:1} will not occur.
\end{theorem}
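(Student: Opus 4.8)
The plan is to first turn the informal redundancy of Fig.~\ref{fig:1} into a precise temporal condition, and then to show that the spacing hypothesis \eqref{eq:communication-design} makes that condition unrealizable under any admissible schedule of delays and local iterations. Concretely, I would say that the redundant pattern occurs when there exist a broadcasting agent (node $\mathbf{A}$), an intermediate relay (node $\mathbf{B}$), and a receiver (node $\mathbf{C}$) such that one and the same broadcast of $\mathbf{A}$ is delivered to $\mathbf{C}$ both directly along $\mathbf{A}\to\mathbf{C}$ and along the two-hop path $\mathbf{A}\to\mathbf{B}\to\mathbf{C}$, with the stale direct copy still being $\mathbf{C}$'s most recent message from $\mathbf{A}$ at the iteration in which $\mathbf{C}$ also consumes $\mathbf{B}$'s relayed copy. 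The goal is then to compare the arrival instants of these two copies and to rule out their simultaneous validity.

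First I would bound the latency of each path using \textbf{Assumption 1} and the delay estimates already derived above. Suppose $\mathbf{A}$ broadcasts at a communication event $T_p$. The direct copy reaches $\mathbf{C}$ after a delay of at most $\Delta T$, hence no later than $T_p+\Delta T$. For the relayed copy, $\mathbf{B}$ first receives $\mathbf{A}$'s state within $\Delta T$; since consecutive local updates of any agent are at most $\overline{T}$ apart, $\mathbf{B}$ completes at least one local iteration folding in that state within an additional $\overline{T}$; and the resulting rebroadcast, which can only be emitted at a communication event, reaches $\mathbf{C}$ after one more hop of at most $\Delta T$. The total latency of the relayed copy is therefore at most $2\Delta T+\overline{T}$, which is exactly the right-hand side of \eqref{eq:communication-design}. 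This decomposition---one hop, one absorption, one hop---is what the three terms of the bound encode.

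The crux is then the event-discreteness argument. Because rebroadcasts occur only at the times $T_k$ and consecutive events are separated by at least $\Delta T_{\text{broadcast}}$, the hypothesis $\Delta T_{\text{broadcast}}\ge 2\Delta T+\overline{T}$ forces $\mathbf{B}$'s relay of $\mathbf{A}$'s state to be emitted no earlier than the next event $T_{p+1}\ge T_p+\Delta T_{\text{broadcast}}$, by which time $\mathbf{A}$ itself has had the opportunity to broadcast a fresh state. I would argue that the refreshed direct message from $\mathbf{A}$ reaches $\mathbf{C}$ no later than $\mathbf{B}$'s relay of the stale state, so that at every iteration of $\mathbf{C}$ the current message attributed to $\mathbf{A}$ is strictly newer than whatever $\mathbf{B}$ relays; consequently the stale direct copy is never simultaneously valid with its relayed counterpart, and the pattern of Fig.~\ref{fig:1} cannot arise. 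The main obstacle is precisely this ordering step: one must verify, over all admissible choices of the per-hop delays (each within $\Delta T$) and the local-iteration instants (gaps within $\overline{T}$), that no interleaving lets the relayed stale copy and an un-superseded direct copy land in the same aggregation window of $\mathbf{C}$. Handling that worst-case interleaving, together with making the informal notion of ``carries richer information'' rigorous, is where the real work lies; the latency bookkeeping of the preceding paragraph is routine by comparison.
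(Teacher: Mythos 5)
Your latency bookkeeping coincides exactly with the paper's: the bound $\Delta T + \overline{T} + \Delta T$ (one hop, one local absorption, one hop) is precisely the decomposition the paper uses, via $\tau - T_s \le \Delta T$ and $t_{i,k} - \tau \le t_{i,k} - t_{j,h_{j,i}^k} \le \overline{T} + \Delta T$. Where you diverge is in the concluding step, and that is where the gap lies. The paper does not compare arrival orders of a ``fresh'' direct copy against a ``stale'' relayed copy; it simply observes that the entire cascade triggered at $T_s$ --- broadcast, receipt by $j$, $j$'s onward transmission, and consumption by $i$ --- completes within $2\Delta T + \overline{T} \le T_{s+1}-T_s$, so cascades from distinct events are temporally disjoint and the relayed (richer) and original versions of the same information can never coexist in one aggregation window. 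Your proposed ordering argument (``the refreshed direct message from $\mathbf{A}$ reaches $\mathbf{C}$ no later than $\mathbf{B}$'s relay of the stale state'') does not follow from the hypotheses: all delays are only upper-bounded, so $\mathbf{A}$'s fresh broadcast at $T_{p+1}$ may take up to $\Delta T$ to arrive while $\mathbf{B}$'s simultaneous relay arrives instantly, and no interleaving lemma is available to exclude this. You acknowledge this yourself (``where the real work lies''), which means the central step of your proof is announced but not carried out.

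There is also an internal inconsistency between your second and third paragraphs. In the second, the relayed copy reaches $\mathbf{C}$ within $2\Delta T + \overline{T}$ of $T_p$, which presupposes that $\mathbf{B}$ forwards within the same cascade; in the third, you assert that $\mathbf{B}$'s relay ``can only be emitted at a communication event'' and hence no earlier than $T_{p+1}$, under which the latency bound of the second paragraph is vacuous. You should pick one model of when relays occur --- the paper's is the former --- and then the conclusion is immediate from the cascade bound alone, with no need for the freshness-ordering machinery.
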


\begin{proof}
Assume that a system-wide broadcast is initiated at time $T_s$. Let $\tau$ be the time when agent $j$ receives the broadcast and subsequently transmits its local information to agent $i$. The delay from $T_s$ to the time $t_{i,k}$ when agent $i$ utilizes the received message can be expressed as
\[\Delta t = t_{i,k} - \tau + \tau - T_s.\]
Since $\tau \in \left[t_{j,h_{j,i}^k},\ t_{j,h_{j,i}^k+1}\right)$, it follows that $$t_{i,k} - \tau \leq t_{i,k} - t_{j,h_{j,i}^k} \leq \overline{T} + \Delta T.$$ Meanwhile, from \textbf{Assumption 1}, we have $\tau - T_s \leq \Delta T$. Therefore,
\[\Delta t \leq \overline{T} + 2\Delta T \leq T_{s+1} - T_s.\]
This implies that the entire communication cascade initiated at $T_s$ concludes before the next broadcast event at $T_{s+1}$. In other words, communication processes triggered by distinct events in $\mathcal{T}_S$ are temporally isolated, eliminating logical interference or data conflicts between successive communication rounds.
\end{proof}

\textbf{Theorem 1} demonstrates that if the communication event sequence $\mathcal{T}_S = \{T_k\}_{k=1}^{+\infty}$ satisfies the condition in~\eqref{eq:communication-design}, the redundant update scenario shown in Fig.~\ref{fig:1} can be avoided. This ensures a clear and efficient communication process, minimizing resource waste. However, if $\Delta T_{\text{broadcast}}$ becomes too large, insufficient communication may slow down algorithmic convergence. This implies that the choice of $\Delta T_{\text{broadcast}}$ requires careful tuning to balance communication efficiency and convergence speed.

\section{Main Results}

In this section, we present the convergence results for the update rules in~\eqref{eq:consistent-update} and~\eqref{eq:inconsistent-update-explicit}. We begin by introducing some key definitions and analytical tools.

\subsection{Communication Graph}

Let $\mathcal{T}_i = \{ t_{i,k} \}_{k=1}^{+\infty}$ denote the iteration time sequence for agent $i$, and define the global time set as $\mathcal{T} = \bigcup\limits_{i=1}^{N} \mathcal{T}_i$. By sorting all elements in $\mathcal{T}$ in ascending order, we obtain a global time axis.

For any time $\tau \in \mathcal{T}$, we define the communication graph at time $\tau$ as $\mathcal{G}(\tau) = \left( \mathcal{N}, \mathcal{V}(\tau) \right)$, where the node set is $\mathcal{N} = \{1, 2, \dots, N\}$, and the edge set is $\mathcal{V}(\tau) = \{ (j, i) \mid j \in \mathcal{N}_i(\tau) \}$. By definition, each node in $\mathcal{G}(\tau)$ includes a self-loop.

We denote the set of communication graphs over time $\mathcal{T}$ as $\mathbf{G} = { \mathcal{G}(\tau) }_{\tau \in \mathcal{T}}$. Different choices of the event-triggered set $\mathcal{T}_S$ will result in different sequences of communication graphs $\mathbf{G}$. A central question is: under what conditions on $\mathbf{G}$ can we ensure algorithm convergence?

To account for the cumulative influence of communication over time, we introduce a graph composition operation. Let $\mathcal{A}(\mathcal{G}(\tau))$ denote the adjacency matrix of $\mathcal{G}(\tau)$. For any two graphs $\mathcal{G}_1$ and $\mathcal{G}_2$ with adjacency matrices $\mathcal{A}(\mathcal{G}_1)$ and $\mathcal{A}(\mathcal{G}_2)$, respectively, their composition is defined as:
\[\mathcal{G}_1 \circ \mathcal{G}_2 \triangleq \text{a graph with adjacency matrix } \mathcal{A}(\mathcal{G}_1)\mathcal{A}(\mathcal{G}_2).\]
This graph product will be used in subsequent analysis to characterize the connectivity of the system over consecutive time intervals.

\subsection{Persistent Communication Events}

In this part, we theoretically establish that as long as the communication graph sequence $\mathbf{G}$—generated under the designed communication scheme—satisfies a joint strong connectivity condition, the algorithm will converge regardless of the specific choice of the event-triggering sequence $\mathcal{T}_S$.

To formalize this condition, we define two events.
\begin{equation}
\begin{aligned}
\mathcal{C}(l) = \Big\{ \text{For all } \tau_1 \in \mathcal{T}, & \text{ and } \Delta l \geq l, \text{ the composed graph } \mathcal{G}(\tau_{\Delta l}) \circ \cdots \circ \mathcal{G}(\tau_2) \circ \mathcal{G}(\tau_1) \\
& \text{ is a strongly connected directed graph} \Big\},
\end{aligned}
\label{eq:connectivity-event}
\end{equation}
and the complementary event:
\begin{equation}
\begin{aligned}
\mathcal{C}(\infty) = \Big\{ \text{For all } \tau_1 \in \mathcal{T},& \text{and}\  l \geq 1, \text{the composed graph } \mathcal{G}(\tau_{\Delta l}) \circ \cdots \circ \mathcal{G}(\tau_2) \circ \mathcal{G}(\tau_1) \\
& \text{is not a strongly connected directed graph} \Big\},
\end{aligned}
\label{eq:communication-failure}
\end{equation}
where $l \geq 1$ and $\tau_1, \dots, \tau_{\Delta l}$ are consecutive time points in $\mathcal{T}$.

The event $\mathcal{C}(\infty)$ indicates a pathological scenario in which persistent communication failure prevents certain agents from ever exchanging information with others. This extreme case corresponds to complete communication breakdown across the system.

\subsection{Convergence Results}

We now establish that the iteration~\eqref{eq:consistent-update} converges under a mild connectivity condition, regardless of the structure of the matrix $A$.

\begin{theorem}
    If the probability of event $\mathcal{C}(\infty)$ occurring is zero, then the algorithm defined by~\eqref{eq:consistent-update} converges almost surely to a solution of the linear system at an exponential rate.
\end{theorem}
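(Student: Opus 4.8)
The plan is to pass to the error variable, reduce the randomized delay-coupled iteration to a delay-free random linear recursion, exhibit a norm in which every step is non-expansive, and then upgrade non-expansiveness to a strict contraction on the windows where the composed communication graph is strongly connected. First I would fix an arbitrary solution $x^*$ of $Ax=b$ and set $e_i(t_{i,k}) = x_i(t_{i,k}) - x^*$. Consistency gives $A_i(t_{i,k}) x^* = b_i(t_{i,k})$ for every random sub-block, so $A_i^\dagger(t_{i,k}) b_i(t_{i,k}) = A_i^\dagger(t_{i,k}) A_i(t_{i,k}) x^*$, and substituting into the theoretical form~\eqref{eq:consistent-theory-form} makes the disturbance term cancel and yields the clean recursion
\[
e_i(t_{i,k+1}) = P_i(t_{i,k}) \left( \frac{1}{d_i(t_{i,k})} \sum_{j \in \mathcal{N}_i(t_{i,k})} e_j\bigl(t_{i,k} - \delta_{j,i}(t_{i,k})\bigr) \right).
\]
Thus each step is an averaging of delayed neighbor errors followed by an orthogonal projection, and every solution of $Ax=b$ is a fixed point.

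Next, following the delay-free construction referenced in the Appendix, I would stack the errors over one delay window into an extended state $\hat{e}(k)$ and write $\hat{e}(k+1) = \hat{M}(k)\,\hat{e}(k)$, where $\hat{M}(k)$ factors as a row-stochastic averaging matrix encoding $\mathcal{G}(\tau)$ composed with the block-diagonal projection $\mathrm{diag}(P_i(t_{i,k}))$. Working in the mixed norm $\|\hat{e}\| = \max_i \|e_i\|_2$, I would observe that each $P_i(t_{i,k})$ is an orthogonal projection and each averaging row is a convex combination, so every $\hat{M}(k)$ is non-expansive; hence $\|\hat{e}(k)\|$ is monotonically non-increasing and the iterates remain bounded. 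A short computation identifies the fixed-point set as exactly the consensus--nullspace subspace $\{(v,\dots,v) : v \in \mathrm{null}(A)\}$, which is precisely the set of configurations in which all agents agree on a common solution of $Ax=b$.

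The main obstacle is turning non-expansiveness into a \emph{quantitative} strict contraction on the complement of this fixed-point subspace, and here I would separate the two ways $\hat{e}$ can fail to lie in it: inter-agent disagreement, and a common component lying in $\mathrm{Row}(A)$. On a window where $\mathcal{G}(\tau_{\Delta l}) \circ \cdots \circ \mathcal{G}(\tau_1)$ is strongly connected, the composed averaging strictly shrinks the disagreement, while the projections $P_i(t_{i,k})$ applied to the randomly selected sub-blocks strictly shrink the $\mathrm{Row}(A)$ component—provided those sub-blocks collectively cover all rows of $A$, which under \textbf{Assumption 1} (every agent active on bounded intervals, each block drawn with positive probability) occurs with positive probability $p>0$ on each window. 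The delicate point is extracting a contraction factor $\rho_0<1$ that is \emph{uniform} over the finitely many admissible graph topologies and over the block-selection randomness; this is where I expect to lean on the paracontraction property of orthogonal projections together with the randomized-Kaczmarz mechanism. Combining a deterministic contraction by $\rho_0$ on the favorable event (strong connectivity \emph{and} full coverage) with non-expansiveness otherwise gives an expected per-window contraction $\mathbb{E}\bigl[\|\hat{e}((k{+}1)L)\|^2 \mid \mathcal{F}_{kL}\bigr] \le \rho\,\|\hat{e}(kL)\|^2$ with $\rho = p\rho_0 + (1-p) < 1$, using equivalence of the max-norm and Euclidean norm in finite dimension to pass between the two.

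Finally I would chain these contractions. Since $\Pr(\mathcal{C}(\infty)) = 0$, the permanent-disconnection pathology is excluded with probability one, so strongly connected windows of bounded length $L$ recur almost surely; iterating the per-window bound yields $\mathbb{E}[\|\hat{e}(kL)\|^2] \le \rho^k \|\hat{e}(0)\|^2$. For any $\beta \in (\rho,1)$, Markov's inequality gives $\Pr(\|\hat{e}(kL)\|^2 > \beta^k) \le (\rho/\beta)^k \|\hat{e}(0)\|^2$, which is summable, so by the Borel--Cantelli lemma $\|\hat{e}(kL)\|^2 \le \beta^k$ eventually, almost surely. The monotonicity of $\|\hat{e}(k)\|$ established above bridges the gap between checkpoints, so $\hat{e}(k)$—and therefore every $x_i(t_{i,k})$—converges almost surely at an exponential rate; because a.s. every block is eventually selected, the common limit lies in $x^* + \mathrm{null}(A)$ and is a genuine solution of $Ax=b$.
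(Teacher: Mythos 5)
Your overall architecture tracks the paper's proof closely --- error recursion with the disturbance cancelled by consistency, delay-augmented state, the mixed norm $\max_i\|e_i\|_2$, non-expansiveness of every step, strict contraction on windows where connectivity and block coverage both hold, and a Borel--Cantelli / Markov argument to upgrade the expected geometric decay to almost-sure exponential convergence. However, there is a genuine flaw in the quantitative step. You correctly identify the fixed-point set of the error dynamics as $\{(v,\dots,v): v\in\mathrm{null}(A)\}$ and correctly observe at the end that the limit lies in $x^*+\mathrm{null}(A)$, yet your per-window bound $\mathbb{E}\bigl[\|\hat{e}((k{+}1)L)\|^2 \mid \mathcal{F}_{kL}\bigr]\le \rho\,\|\hat{e}(kL)\|^2$ with $\rho<1$ is stated for the \emph{full} error norm, and iterating it forces $\hat{e}\to 0$. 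When $A$ is rank-deficient this is false: the $\mathrm{null}(A)$ component of the error is untouched by every projection ($P_{\mathcal{I}}y=y$ for $y\in\mathrm{null}(A)$) and is only averaged toward a consensus value $x_0$ that is generically nonzero, so $\|\hat{e}(k)\|\not\to 0$. The paper avoids this by the exact orthogonal splitting $\mathbb{R}^n=\mathrm{Row}(A)\oplus\mathrm{Row}_\perp(A)$, under which the dynamics decouple into a projected consensus system on $[\mathrm{Row}(A)]^{dN}$ (which does contract to zero and carries the whole contraction argument) and a plain delayed consensus system on $[\mathrm{Row}_\perp(A)]^{dN}$ (which converges exponentially to $x_0\otimes\mathbf{1}$ by standard stochastic-matrix results). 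You gesture at this separation ("inter-agent disagreement" versus "a common component lying in $\mathrm{Row}(A)$") but then collapse it in the inequality; the contraction must be applied to $\mathbf{e}_1$ only, or equivalently to the distance to the fixed-point subspace.

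The second gap is that the uniform factor $\rho_0<1$ on favorable windows is asserted rather than established, and this is where essentially all of the paper's technical work lives. Invoking "paracontraction of orthogonal projections" gives you $\|\cdot\|\le 1$ for each factor, but strictness of the \emph{composed, delay-interleaved, asynchronous} product requires showing that some term of the resulting projection polynomial in every block row of $M_A$ uses index sets that cover a maximal linearly independent subset of the rows of $A$ (the paper's Lemma~1 and Proposition~1, generalized to rank-deficient $A$), and that $l$-strong connectivity of the composed delayed graphs together with full block coverage actually produces such a term in every row --- this is the content of the delayed-graph path construction (Lemmas~2--7 and Proposition~2, with the partition into identity walks and inter-node paths). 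Without that covering/completeness argument the step from "strongly connected window plus full coverage" to "deterministic contraction by $\rho_0$" does not follow; uniformity over the finitely many configurations is then easy, as you say, but only once strictness is in hand.
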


\textbf{Theorem 2} states that as long as the communication network retains sufficient long-term connectivity—i.e., the event $\mathcal{C}(\infty)$ does not occur—the algorithm guarantees exponential convergence. Conversely, persistent communication failure will prevent convergence.

We next present the convergence result for the inconsistent case governed by~\eqref{eq:inconsistent-update-explicit}.

\begin{theorem}
    If the probability of event $\mathcal{C}(\infty)$ occurring is zero, then the algorithm~\eqref{eq:inconsistent-update-explicit} converges almost surely at an exponential rate to an approximation $\tilde{x}^*$ of the least-squares solution $x^*$ of the inconsistent system $Ax = b$. Let $\sigma_{\min}$ denote the smallest singular value of $A$, then the relative error satisfies:
    \[\frac{\|x^* - \tilde{x}^*\|_2}{\|x^*\|_2} \leq \frac{1}{\left( \frac{\sigma_{\min}}{\lambda} \right)^2 + 1}.\]
\end{theorem}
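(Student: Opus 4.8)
The plan is to reduce the inconsistent problem to the consistent theory of \textbf{Theorem 2} applied to the augmented system, and then use a singular value analysis to identify the limit and estimate its distance from $x^*$.

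First, observe that the augmented matrix $A' = \begin{pmatrix} A & \lambda I_m \end{pmatrix}$ has full row rank $m$, because its trailing block $\lambda I_m$ is nonsingular; hence \eqref{eq:augmented-system} is consistent for every $b$. By the derivation in Section 2.2, the explicit iteration \eqref{eq:inconsistent-update-explicit} is exactly the randomized block Kaczmarz update \eqref{eq:consistent-update} applied to $A'z = b$. Therefore \textbf{Theorem 2} applies directly: when $\mathcal{C}(\infty)$ has probability zero, the augmented states $z_i$ converge almost surely and exponentially to a common solution $\tilde z^* = \begin{pmatrix} (\tilde{x}^*)^\top & (\tilde{y}^*)^\top \end{pmatrix}^\top$ of $A'z = b$, whose first block is the desired $\tilde{x}^*$. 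This settles the convergence claim, and it remains to identify $\tilde{x}^*$ and bound $\|x^* - \tilde{x}^*\|_2$.

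Next I would pin down \emph{which} solution is reached by tracking row-space invariance. Writing the update as in \eqref{eq:consistent-theory-form}, the data term satisfies ${A_i'}^\dagger(t_{i,k}) b_i(t_{i,k}) \in \mathrm{Row}(A_i'(t_{i,k})) \subseteq \mathrm{Row}(A')$, and the projector $P_i(t_{i,k}) = I - {A_i'}^\dagger(t_{i,k}) A_i'(t_{i,k})$ maps $\mathrm{Row}(A')$ into itself; since each $w_i^z$ is an average of neighbor states, $\mathrm{Row}(A')$ is invariant under the entire asynchronous recursion. Initializing every agent in $\mathrm{Row}(A')$ (e.g.\ at the origin) then forces $\tilde z^* \in \mathrm{Row}(A')$, and the unique row-space solution of $A'z = b$ is the minimum-norm solution $\tilde z^* = {A'}^\dagger b$. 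Using ${A'}^\dagger = {A'}^\top (A' {A'}^\top)^{-1}$ together with $A' {A'}^\top = A A^\top + \lambda^2 I_m$, its first block is the Tikhonov (ridge) solution
\[\tilde{x}^* = A^\top \left( A A^\top + \lambda^2 I_m \right)^{-1} b = \left( A^\top A + \lambda^2 I_n \right)^{-1} A^\top b.\]

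Finally, the bound follows from a mode-by-mode comparison. Let $A = U \Sigma V^\top$ be a singular value decomposition with nonzero singular values $\sigma_i$, and set $\hat b = U^\top b$. Along each right singular vector the components of $x^* = A^\dagger b$ and of $\tilde{x}^*$ are $\hat b_i / \sigma_i$ and $\sigma_i \hat b_i / (\sigma_i^2 + \lambda^2)$, so their difference is $\hat b_i \lambda^2 / [\sigma_i(\sigma_i^2 + \lambda^2)]$. Setting $c_i = \hat b_i^2 / \sigma_i^2 \ge 0$ yields
\[\frac{\|x^* - \tilde{x}^*\|_2^2}{\|x^*\|_2^2} = \frac{\sum_i c_i \left( \frac{\lambda^2}{\sigma_i^2 + \lambda^2} \right)^2}{\sum_i c_i},\]
a weighted average of the quantities $\bigl( \lambda^2 / (\sigma_i^2 + \lambda^2) \bigr)^2$; each is largest at the smallest nonzero singular value $\sigma_{\min}$, so the ratio is at most $\bigl( \lambda^2 / (\sigma_{\min}^2 + \lambda^2) \bigr)^2$, and taking square roots gives the claimed bound. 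The main obstacle is the second step: \textbf{Theorem 2} guarantees only \emph{some} solution, so the stated error estimate is meaningful only after $\tilde{x}^*$ is identified as the ridge solution; making the row-space invariance and initialization argument rigorous in the asynchronous, delayed, randomly-blocked regime is the delicate part, whereas the closing SVD estimate is then routine.
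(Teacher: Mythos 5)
Your proposal is correct and follows essentially the same route as the paper: invoke \textbf{Theorem 2} on the consistent augmented system $A'z=b$, use zero initialization to identify the limit as the minimum-norm solution $(A')^\dagger b$ whose first block is the ridge solution, and then compare $\Sigma^\dagger$ with $\widetilde{\Sigma}^\dagger$ via the SVD of $A$, where the diagonal gap factor $\lambda^2/(\sigma_i^2+\lambda^2)$ is maximized at $\sigma_{\min}$. Your explicit row-space invariance argument and the weighted-average form of the final estimate are just slightly more detailed renderings of the same steps the paper carries out.
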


The proofs of \textbf{Theorem 2} and \textbf{Theorem 3} are provided in the Appendix.

\section{Experiments}
\subsection{Experimental Design}

To evaluate the practical effectiveness of the proposed \emph{asynchronous distributed projection algorithm}, we first outline a unified \textbf{experimental design strategy} prior to presenting detailed results. All experiments follow a \textbf{controlled variable methodology}: we first define a set of \emph{baseline configurations}, and then perform comparative analysis along four distinct and non-overlapping dimensions:

\textbf{(i) Scalability:} We examine convergence performance and communication cost as the number of agents $\mathrm{N}$ increases under varying problem sizes $(m, n)$, to assess how the algorithm scales with system parallelism.

\textbf{(ii) Communication–Computation Trade-off:} With fixed problem size and number of agents, we vary the number of neighbors $N_n$ and the communication triggering interval $\Delta t$ to explore the trade-off between local computation and communication overhead.

\textbf{(iii) Robustness:} We inject node failure disturbances into a fixed configuration to evaluate the fault tolerance of the algorithm and quantify the performance degradation incurred.

\textbf{(iv) $\lambda$ Parameter Sensitivity:} We solve the same problem instance under different choices of the regularization parameter $\lambda$ to investigate its impact on convergence and accuracy.

\subsection{Experimental Details}
\paragraph{Parameters}

Except for the independent variables under investigation, all other hyperparameters are held constant across experiments. The notations and descriptions of all parameters are summarized in Table~\ref{tab:baseline}. Each agent utilizes a non-blocking publish–subscribe mechanism implemented via \textbf{ROS2} Topics. A unified convergence criterion is adopted: \emph{“terminate globally when any node satisfies the tolerance condition.”}

Each experiment is repeated five times. The following core performance metrics are recorded and averaged to mitigate the variance introduced by asynchronous execution: number of iterations ($\bar{k}$), computation time ($\bar{t}_{\mathrm{cmp}}$), total number of communications ($\bar{c}$), cumulative communication time ($\bar{t}_{\mathrm{comm}}$), and overall wall-clock time ($T$).

\begin{table}[H]
\centering
\small
\caption{Baseline Experimental Parameters and Evaluation Metrics}
\label{tab:baseline}
\begin{tabular}{c|lll}
\toprule
~ & \multicolumn{1}{c}{\textbf{Category}} & \multicolumn{1}{c}{\textbf{Symbol}} & \multicolumn{1}{c}{\textbf{Description}} \\
\midrule
~ & Sparsity          & $\mathrm{density}$     & Ratio of nonzero elements in matrix $A$ (set to $0.05$) \\
~ & Projection block size & $\mathrm{N}_{\mathrm{R}}$  & Number of rows used per projection step (set to $50$) \\
~ & Error tolerance   & $\mathrm{tol}$         & Convergence threshold $|x_i - x^\star| \le \mathrm{tol}$ (set to $10^{-3}$) \\
~ & Max iteration cap & $\bar{k}_{\mathrm{max}}$ & Used in~\eqref{eq:inconsistent-update-explicit} for maximum iterations per node (set to $5000$) \\
\textbf{Experimental} & Number of nodes   & $\mathrm{N}$           & Independent variable, adjusted based on problem size \\
\textbf{Parameters} & Number of neighbors & $\mathrm{N}_{\mathrm{n}}$   & Number of adjacent communication peers (default equals node count) \\
~ & Communication interval & $\Delta t$        & Integer controlling communication frequency per unit time (default: $25$) \\
~ & Failure ratio     & $\rho$                 & Proportion of failed nodes (default: $0$) \\
~ & Failure duration  & $\xi$                  & Max downtime per node in seconds (default: $0$) \\
~ & Inconsistency noise & $\sigma$             & Artificial noise added for~\eqref{eq:inconsistent-update-explicit}-related experiments (set to $1$) \\
\midrule
~ & Avg. iteration count & $\bar{k}_{\mathrm{iter}}$ & Mean number of iterations to reach tolerance \\
~ & Avg. computation time & $\bar{t}_{\mathrm{cmp}}$ & Mean total computation time per node (sec) \\
~ & Avg. communication count & $\bar{c}$        & Average number of topic-based message transmissions \\
\textbf{Evaluation} & Avg. communication time & $\bar{t}_{\mathrm{comm}}$ & Total accumulated time spent in communication (sec) \\
\textbf{Metrics} & Wall-clock time    & $T$                    & Total elapsed system runtime (sec) \\
~ & Avg. failure time  & $\bar{t}_{\mathrm{stop}}$ & Average total downtime for nodes with simulated failures (sec) \\
~ & Avg. failure count & $\bar{k}_{\mathrm{stop}}$ & Average number of failure events per affected node \\
~ & Termination error  & $\mathrm{e}_{\mathrm{stop}}$ & For~\eqref{eq:inconsistent-update-explicit}, minimum 2-norm error at convergence \\
\bottomrule
\end{tabular}
\end{table}

\paragraph{Data Generation and Preprocessing}

The linear systems used in the experiments are automatically generated and cached by a unified data management module, \texttt{DataManager}, ensuring data reusability and comparability across experiment groups. Given a problem size $(m, n)$, the goal is to construct a sparse linear system $Ax = b$, where the coefficient matrix $A \in \mathbb{R}^{m \times n}$ is a randomly generated sparse matrix with fixed sparsity.

The generation process is as follows: we use \texttt{scipy.sparse.random} to generate $A$ with a sparsity level of $\mathrm{density} = 0.05$, and convert it into a NumPy array for computational efficiency. A target solution vector $x^\star \in \mathbb{R}^n$ is sampled from a standard normal distribution, and the noiseless right-hand side is computed via $b = Ax^\star$. To simulate system inconsistency, a zero-mean Gaussian noise term with variance $\sigma^2$ can be added: $b = Ax^\star + \sigma \cdot \eta$, where $\eta \sim \mathcal{N}(0, I)$.

To facilitate convergence analysis and error evaluation, the minimum-norm solution $x^*$ is precomputed using the Moore–Penrose pseudoinverse via \texttt{pinv}$(A)$. The generated data are evenly partitioned across $N$ agents and saved as \texttt{A\_i.npy} and \texttt{b\_i.npy} files, which are loaded and distributed by the controller at experiment initialization.

\paragraph{Pascal-Triangle-Based Communication Topology}

To construct a well-structured and analytically tractable communication graph, we adopt a \textbf{Pascal-triangle-based priority linking strategy} to define inter-agent connectivity. Nodes are first indexed according to a Pascal triangle hierarchy, where the $r$-th row contains $r{+}1$ nodes. This results in a layered structure.

Each node is then connected to its “parent nodes” in the row above—specifically, the left parent $(r{-}1, c{-}1)$ and right parent $(r{-}1, c)$—as well as its “child nodes” in the row below: $(r{+}1, c)$ and $(r{+}1, c{+}1)$, and its immediate left and right neighbors within the same row. If the number of neighbors is still below the predefined threshold $N_n$, additional edges are added randomly to preserve connectivity while capping the maximum degree at $N_n$.

This hybrid strategy combines structural regularity with controlled randomness and offers the following benefits:
(1) Naturally constructed “parent–child” paths promote efficient state propagation;
(2) Balanced degree distribution prevents communication bottlenecks;
(3) Clear and analyzable topology facilitates visualization and theoretical interpretation.

\paragraph{Communication Events Design}

To evaluate the impact of event-triggered communication on performance, we define a simple rule for triggering communication events in the experiments. Specifically, each agent $i$ communicates with its neighbors at iteration steps satisfying:
\[\big\{t_{i,k} \ \big| \ \text{mod}(k, \Delta t) = 0, \, k > 0\big\},\]
i.e., agent $i$ broadcasts to neighbors every $\Delta t$ iterations. All other parameters are held constant across experiments, with only $\Delta t$ varying. The number of agents is fixed to a minimal configuration $\text{N}_{\min}$ to isolate the communication frequency effect.

\paragraph{Hardware and Software Environment}

All experiments were conducted on the AutoDL cloud server platform, with each node equipped with dual 64-core AMD EPYC 9654 vCPUs. The host operating system was Ubuntu 22.04 LTS. The software environment included the ROS2 Humble middleware for communication, Python 3.11, and the numerical computation libraries NumPy 2.2 and SciPy 1.15. This infrastructure provides sufficient concurrency and network bandwidth to support stable, large-scale asynchronous experiments with multiple agents.

\paragraph{Remarks on Asynchronous Randomness}

In asynchronous distributed systems, node-level computation and communication are influenced by numerous stochastic factors, including OS-level thread scheduling, network packet queuing, and the underlying NUMA memory architecture. As a result, even with identical configurations, repeated runs may yield slight variations in performance.

To mitigate bias introduced by such randomness, we adopt the following experimental protocols:
(1) The random seeds for both NumPy and ROS initializations are fixed to ensure reproducible generation of $A$, $b$, and the communication topology;
(2) Each experiment is repeated five times, and results are averaged across runs;
(3) Performance is evaluated using statistical metrics—such as iteration count, communication time, and wall-clock duration—rather than relying on single-run measurements.

Hence, the conclusions presented in this paper should be interpreted as \emph{statistical trends}, rather than exact pointwise characterizations of system performance.

\subsection{Experimental Results}

\paragraph{Effect of Varying Number of Nodes}

According to the proof of \textbf{Theorem 1}, if the projection matrix constructed from the data held by a single agent already spans the entire column space, then additional information from other agents becomes redundant, and inter-agent communication no longer contributes to error reduction. Based on this observation, we design the number of agents $\mathrm{N}$ for a given problem size $(m, n)$ as follows:
\[\text{N}(\theta_1) = \left\lceil \frac{m}{n \, \theta_1} \right\rceil, \quad \theta_1 \in \left\{0.2 + 0.075\,i \mid i = 1, \dots, 8 \right\},\]
where $\theta_1$ represents the proportion of the global data allocated to each agent. This ensures that, for any configuration, each agent holds approximately $\theta_1$ fraction of the total data. Results are summarized in Table~\ref{tab:节点数}.

\begin{table}[H]
\centering
\footnotesize
\caption{Performance Metrics under Varying Agent Counts across Different Problem Sizes}
\begin{subtable}[t]{0.48\textwidth}
\centering
\begin{tabular}{ccccccc}
\toprule
\textbf{(m,n)} & $\mathrm{N}$ & $\bar{k}_{\mathrm{iter}}$ & $\bar{t}_{\mathrm{comm}}$ & $\bar{c}$ & $\bar{t}_{\mathrm{comm}}$ & $T$ \\
\midrule
~ & \textbf{13.00} & \textbf{2,000.38} & \textbf{10.07} & \textbf{953.08} & \textbf{6.12} & \textbf{16.18} \\
~ & 14.00 & 2,065.14 & 11.96 & 1,066.21 & 7.38 & 19.34 \\
~ & 16.00 & 2,130.56 & 14.69 & 1,268.75 & 9.25 & 23.94 \\
\multirow{2}{*}{\textbf{30000$\times$3000}} & 19.00 & 2,389.84 & 17.53 & 1,697.74 & 10.93 & 28.47 \\
~ & 22.00 & 2,527.41 & 23.15 & 2,103.32 & 15.01 & 38.17 \\
~ & 27.00 & 2,837.70 & 21.02 & 2,917.78 & 10.80 & 31.82 \\
~ & 35.00 & 7,350.83 & 72.09 & 7,517.60 & 14.26 & 86.35 \\
~ & 50.00 & 12,218.90 & 174.17 & 9,740.64 & 16.75 & 190.92 \\
\midrule
~ & 13.00 & 2,624.69 & 13.35 & \textbf{1,253.54} & 8.25 & 21.60 \\
~ & 14.00 & 2,688.57 & 15.70 & 1,391.57 & 9.75 & 25.45 \\
~ & 16.00 & 2,854.81 & 19.05 & 1,701.25 & 12.74 & 31.79 \\
\multirow{2}{*}{\textbf{40000$\times$4000}} & \textbf{19.00} & \textbf{2,583.84} & \textbf{8.25} & 1,848.84 & \textbf{6.17} & \textbf{14.42} \\
~ & 22.00 & 3,339.50 & 28.66 & 2,780.77 & 20.92 & 49.58 \\
~ & 27.00 & 4,268.48 & 44.27 & 4,332.48 & 25.23 & 69.50 \\
~ & 35.00 & 3,819.83 & 21.16 & 4,646.97 & 6.72 & 27.87 \\
~ & 50.00 & 4,536.54 & 49.50 & 6,019.08 & 15.32 & 64.82 \\
\midrule
~ & 21.00 & \textbf{2,007.90} & 18.31 & \textbf{1,582.14} & 11.96 & 30.27 \\
~ & 24.00 & 2,220.75 & 21.06 & 2,016.38 & 13.32 & 34.38 \\
~ & 27.00 & 2,606.63 & 27.12 & 2,660.07 & 13.91 & 41.03 \\
\multirow{2}{*}{\textbf{50000$\times$3000}} & 31.00 & 5,869.65 & 62.42 & 4,476.39 & 14.47 & 76.89 \\
~ & 37.00 & 14,556.59 & 169.43 & 10,034.11 & 21.84 & 191.27 \\
~ & \textbf{45.00} & 2,365.38 & \textbf{8.26} & 4,095.02 & \textbf{4.14} & \textbf{12.40} \\
~ & 59.00 & 3,732.63 & 29.20 & 6,061.05 & 5.12 & 34.32 \\
~ & 84.00 & 4,633.73 & 38.15 & 12,195.23 & 10.33 & 48.48 \\
\bottomrule
\end{tabular}
\caption{Set A}
\end{subtable}
\hfill
\begin{subtable}[t]{0.48\textwidth}
\centering
\begin{tabular}{ccccccc}
\toprule
\textbf{(m,n)} & $\mathrm{N}$ & $\bar{k}_{\mathrm{iter}}$ & $\bar{t}_{\mathrm{comm}}$ & $\bar{c}$ & $\bar{t}_{\mathrm{comm}}$ & $T$ \\
\midrule
~ & 13.00 & 2,979.62 & 11.79 & \textbf{1,423.15} & 7.19 & 18.99 \\
~ & 14.00 & 3,321.71 & 22.29 & 1,720.50 & 14.47 & 36.76 \\
~ & \textbf{16.00} & \textbf{2,968.69} & \textbf{5.79} & 1,772.06 & \textbf{4.50} & \textbf{10.28} \\
\multirow{2}{*}{\textbf{50000$\times$5000}} & 19.00 & 3,656.84 & 29.44 & 2,610.05 & 21.82 & 51.26 \\
~ & 22.00 & 4,122.23 & 38.41 & 3,444.77 & 28.89 & 67.30 \\
~ & 27.00 & 4,635.41 & 49.45 & 4,764.96 & 32.65 & 82.10 \\
~ & 35.00 & 4,962.66 & 48.23 & 4,486.71 & 20.54 & 68.77 \\
~ & 50.00 & 5,402.64 & 28.76 & 9,136.68 & 10.83 & 39.59 \\
\midrule
~ & 19.00 & \textbf{2,526.58} & \textbf{15.12} & \textbf{1,786.74} & \textbf{9.93} & 25.05 \\
~ & 21.00 & 2,786.00 & 24.46 & 2,206.95 & 15.51 & 39.97 \\
~ & 24.00 & 2,867.38 & 28.79 & 2,612.92 & 19.28 & 48.08 \\
\multirow{2}{*}{\textbf{60000$\times$4000}} & 28.00 & 3,550.32 & 39.58 & 3,761.75 & 20.72 & 60.30 \\
~ & 33.00 & 6,191.21 & 58.53 & 4,416.88 & 10.13 & 68.66 \\
~ & 41.00 & 6,846.66 & 78.69 & 7,352.20 & 12.68 & 91.37 \\
~ & \textbf{53.00} & 3,374.17 & 18.05 & 6,438.79 & 6.65 & \textbf{24.70} \\
~ & 75.00 & 3,915.13 & 19.33 & 11,539.15 & 9.97 & 29.30 \\
\midrule
~ & 20.00 & 3,183.05 & 28.68 & \textbf{2,405.35} & 20.99 & 49.68 \\
~ & 23.00 & 3,407.04 & 33.36 & 2,961.30 & 23.09 & 56.46 \\
~ & \textbf{26.00} & \textbf{2,912.85} & \textbf{12.50} & 2,895.65 & \textbf{9.21} & \textbf{21.70} \\
\multirow{2}{*}{\textbf{80000$\times$5000}} & 30.00 & 5,731.30 & 65.90 & 5,736.33 & 29.65 & 95.54 \\
~ & 35.00 & 4,446.34 & 30.58 & 4,778.37 & 8.47 & 39.05 \\
~ & 44.00 & 6,064.98 & 60.31 & 7,230.57 & 11.78 & 72.09 \\
~ & 57.00 & 4,093.14 & 13.11 & 9,128.95 & 9.40 & 22.51 \\
~ & 80.00 & 5,357.69 & 19.90 & 16,856.49 & 16.06 & 35.96 \\
\bottomrule
\end{tabular}
\caption{Set B}
\end{subtable}
\label{tab:节点数}
\end{table}

\textbf{Sensitivity to agent Count:} Across six problem configurations, we observe a highly consistent trend when the per-agent data ratio $\theta_1$ is uniformly decreased from $0.80$ to $0.20$ in eight steps:
\begin{enumerate}
    \item[(i)] When $\theta_1 \ge 0.50$ (i.e., a small number of agents, each holding sufficient data to span the column space), the average number of iterations and communication rounds increase only marginally. Wall-clock time $T$ remains low, indicating a balanced trade-off between \emph{computational speedup} and \emph{communication load}.
    \item[(ii)] As $\theta_1$ drops to the range of $[0.30, 0.35]$, local projections become insufficient to independently span the column space, resulting in a sharp increase in dependence on inter-agent information. Consequently, both iteration count and communication overhead increase significantly.
    \item[(iii)] When $\theta_1$ approaches $0.20$ (i.e., maximum $\mathrm{N}$), the “information deficiency” effect is further amplified by network queuing and ROS2 topic contention. For larger problem sizes (e.g., $30{,}000 \times 3{,}000$ and $50{,}000 \times 3{,}000$), this leads to a second surge in iterations, forming a characteristic “U-shaped” curve in wall-clock time.
\end{enumerate}

Interestingly, systems with higher row-to-column ratios exhibit greater tolerance to small $\theta_1$, and their optimal $\theta_1$ shifts slightly rightward. This suggests that in “row-rich” settings, individual agents still retain substantial informative content.

\textbf{In summary, maintaining a per-agent data ratio of $\theta_1 \gtrsim 0.30$ effectively suppresses both iteration explosion and communication congestion across all tested scales.} It should be noted that due to unavoidable asynchronous effects such as ROS2 scheduling, network queuing, and NUMA contention, the results reflect \emph{macroscopic trends} rather than precise performance benchmarks. Nonetheless, these findings offer practical guidance for estimating the \emph{minimum feasible node count} and the \emph{upper bound of parallel efficiency} in real-world deployments.

\paragraph{Neighbor Count}

In this section, we evaluate the sensitivity of algorithm performance to the number of neighbors per agent. The experiment setup is based on the same linear systems used in the \textbf{Varying Agent Count} experiments. For each problem size, we fix the number of agents $\mathrm{N}$ to the optimal value $\mathrm{N}_{\min}$ previously identified as minimizing wall-clock time $T$ in~\ref{tab:节点数}. All other parameters are kept consistent, except for the neighbor count $\mathrm{N}_{\text{n}}$, which is selected from the set:
\[
\Big\{\mathrm{N}_{\text{n}} \;\Big|\; \mathrm{N}_{\text{n}} = \big\lceil \mathrm{N} \cdot \theta_2 \big\rceil,\ \theta_2 \in \big\{0.1 \times i \;\big|\; i = 1, \dots, 8\big\} \Big\}.
\]
Results are summarized in Table~\ref{tab:邻居数}.

\begin{table}[H]
\centering
\footnotesize
\caption{Impact of Neighbor Count Across Different Problem Scales}
\begin{subtable}[t]{0.48\textwidth}
\centering
\begin{tabular}{ccccccc}
\toprule
\textbf{(m,n)} & $\mathrm{N}_{\mathrm{n}}$ & $\bar{k}_{\mathrm{iter}}$ & $\bar{t}_{\mathrm{comm}}$ & $\bar{c}$ & $\bar{t}_{\mathrm{comm}}$ & $T$ \\
\midrule
~ & 2 & 2486.31 & 8.99 & \textbf{196.85} & 1.24 & 10.23 \\
~ & 3 & 1939.92 &\textbf{ 5.80} & 223.85 & \textbf{1.18} & \textbf{6.98} \\
~ & 4 & 1809.69 & 8.23 & 286.54 & 2.03 & 10.26 \\
\textbf{30000$\times$3000} & 6 & \textbf{1801.54} & 6.66 & 417.15 & 2.45 & 9.11 \\
$\mathrm{N}=13$ & 7 & 1783.85 & 6.56 & 488.54 & 2.61 & 9.17 \\
~ & 8 & 1863.69 & 8.93 & 565.54 & 3.72 & 12.65 \\
~ & 10 & 1953.92 & 9.15 & 749.08 & 4.62 & 13.77 \\
~ & 11 & 1930.85 & 7.83 & 825.62 & 4.56 & 12.39 \\
\midrule
~ & 2 & 4258.16 & 24.11 & \textbf{319.89} & \textbf{3.15} & 27.26 \\
~ & 4 & 2601.89 & 16.62 & 413.79 & 3.90 & \textbf{20.52} \\
~ & 6 & \textbf{2514.16 }& \textbf{16.06} & 588.11 & 5.56 & 21.62 \\
\textbf{40000$\times$4000} & 8 & 2574.89 & 18.12 & 806.74 & 7.19 & 25.31 \\
$\mathrm{N}=19$ & 10 & 2613.05 & 20.07 & 1025.95 & 9.67 & 29.74 \\
~ & 12 & 2705.26 & 20.55 & 1276.32 & 11.82 & 32.37 \\
~ & 14 & 2822.05 & 22.69 & 1544.58 & 13.67 & 36.36 \\
~ & 16 & 3007.68 & 21.70 & 1830.79 & 13.40 & 35.10 \\
\midrule
~ & 5 & \textbf{2121.20} & 26.10 & \textbf{417.24} & 6.36 & 32.46 \\
~ & 9 & 3536.71 & 33.11 & 643.33 & 3.29 & 36.40 \\
~ & 14 & 4959.89 & 42.80 & 978.36 & \textbf{2.46} & 45.26 \\
\textbf{50000$\times$3000} & 18 & 9301.33 & 109.88 & 2919.16 & 14.75 & 124.63 \\
$\mathrm{N}=45$ & 23 & 2205.56 & \textbf{12.81} & 1865.49 & 3.32 & \textbf{16.13} \\
~ & 27 & 4666.31 & 53.15 & 2074.11 & 7.70 & 60.85 \\
~ & 32 & 7461.38 & 87.68 & 5634.82 & 11.64 & 99.32 \\
~ & 36 & 2684.33 & 16.64 & 3477.44 & 4.02 & 20.66 \\
\bottomrule
\end{tabular}
\caption{Set A}
\end{subtable}
\hfill
\begin{subtable}[t]{0.48\textwidth}
\centering
\begin{tabular}{ccccccc}
\toprule
\textbf{(m,n)} & $\mathrm{N}_{\mathrm{n}}$ & $\bar{k}_{\mathrm{iter}}$ & $\bar{t}_{\mathrm{comm}}$ & $\bar{c}$ & $\bar{t}_{\mathrm{comm}}$ & $T$ \\
\midrule
~ & 2 & 3844.44 & 21.61 & \textbf{306.00} & \textbf{3.05} & 24.66 \\
~ & 4 & 2940.38 & 16.18 & 452.12 & 4.17 & \textbf{20.35} \\
~ & 5 & 2904.75 & \textbf{15.98} & 562.62 & 5.19 & 21.17 \\
\textbf{50000$\times$5000} & 7 & 2971.31 & 16.40 & 826.88 & 7.27 & 23.67 \\
$\mathrm{N}=16$  & 8 & 2990.25 & 17.79 & 918.94 & 7.98 & 25.77 \\
~ & 10 & 3017.88 & 18.69 & 1183.62 & 10.61 & 29.30 \\
~ & 12 & 3181.81 & 20.73 & 1502.12 & 13.14 & 33.87 \\
~ & 13 & 3325.88 & 27.09 & 1685.62 & 16.52 & 43.61 \\
\midrule
~ & 6 & \textbf{2997.96} & 41.74 & \textbf{710.60} & 12.60 & 54.34 \\
~ & 11 & 4707.23 & 66.63 & 1561.25 & 16.40 & 83.03 \\
~ & 16 & 4151.70 & \textbf{37.43} & 2505.45 & \textbf{8.57} & \textbf{46.00} \\
\textbf{60000$\times$4000} & 22 & 9057.58 & 130.67 & 3536.96 & 13.16 & 143.83 \\
$\mathrm{N}=53$ & 27 & 7817.55 & 100.66 & 6699.91 & 18.83 & 119.49 \\
~ & 32 & 8517.45 & 130.14 & 5977.43 & 19.10 & 149.24 \\
~ & 38 & 19740.64 & 305.04 & 6433.70 & 22.64 & 327.68 \\
~ & 43 & 11700.68 & 169.56 & 7009.15 & 15.86 & 185.42 \\
\midrule
~ & 3 & 2947.04 & 26.17 & \textbf{351.00} & \textbf{4.88} & 31.05 \\
~ & 6 & \textbf{2616.38} & 26.61 & 605.42 & 8.66 & 35.27 \\
~ & 8 & 2647.81 & 24.43 & 840.81 & 10.25 & 34.68 \\
\textbf{80000$\times$5000} & 11 & 2825.27 & 27.80 & 1223.85 & 14.03 & 41.83 \\
$\mathrm{N}=26$ & 13 & 2879.31 & 29.83 & 1481.50 & 16.71 & 46.54 \\
~ & 16 & 2621.23 & \textbf{11.21} & 1641.31 & 7.89 & \textbf{19.10} \\
~ & 19 & 3342.65 & 35.44 & 2481.08 & 23.55 & 58.99 \\
~ & 21 & 3260.31 & 30.22 & 2678.23 & 21.19 & 51.41 \\
\bottomrule
\end{tabular}
\caption{Set B}
\end{subtable}
\label{tab:邻居数}
\end{table}

\textbf{Neighbor Count Sensitivity Analysis:} Under fixed agent count $\mathrm{N}_{\min}$, we vary the maximum allowable neighbor count $\mathrm{N}_{\text{n}}$ and observe the following:

\begin{enumerate}
    \item[(i)] \textbf{Convergence Efficiency:} As $\mathrm{N}_{\text{n}}$ increases, the average iteration count $\bar{k}$ exhibits a three-phase trend: initially decreasing due to enhanced information diffusion, then plateauing, and slightly rising afterward due to communication overhead and queuing delays outweighing synchronization benefits.
    \item[(ii)] \textbf{Communication Time:} The total communication count $\bar{c}$ increases nearly linearly with $\mathrm{N}_{\text{n}}$. However, due to ROS2’s internal batching for large messages, the growth in total communication time $\bar{t}_{\mathrm{comm}}$ is often sublinear with respect to $\bar{c}$.
    \item[(iii)] \textbf{Wall-Clock Time:} Consequently, $T$ forms a classic U-shaped curve. Across all problem scales, the minimum $T$ is consistently achieved when $\mathrm{N}_{\text{n}} \approx [0.2,0.35]\mathrm{N}$ (e.g., for $30000 \times 3000$, $T_{\min} = 6.98$s at $\mathrm{N}_{\text{n}} = 3$). Larger problem sizes or higher row-to-column ratios tend to shift the optimal neighbor ratio slightly lower, indicating that communication bottlenecks are more pronounced in larger-scale systems. Extreme points, such as the case $60000 \times 4000$ with $\mathrm{N}_{\text{n}} = 38$, show sharp increases in iteration count due to additional queuing overhead triggered by ROS2 topic buffer saturation.
\end{enumerate}

\textbf{In summary, a moderate neighborhood size of approximately $0.25\mathrm{N}$ strikes a favorable balance between information diffusion and communication cost, consistently achieving near-optimal performance across problem scales.} This insight implies that, particularly in resource-constrained scenarios, tuning neighbor degree offers a practical alternative to scaling agent count or expanding global communication scope, thus effectively minimizing overall wall-clock time without compromising convergence.

\paragraph{Communication Frequency}

In this experiment, we investigate how the frequency of communication events influences algorithmic performance by varying the communication triggering interval $\Delta t$ as follows:
\[\Delta t \in \{1, 3, 5, 7, 9, 15, 30, 50, 70, 100\}.\]
Guided by the principle of parsimony, we refrain from designing overly complex communication triggers. This range of $\Delta t$ values is sufficient to reveal the relationship between communication frequency and convergence behavior. Results are summarized in Table~\ref{tab:intervals}.

\begin{table}[H]
\centering
\footnotesize
\caption{Performance Metrics under Different Communication Intervals across Problem Sizes}
\begin{subtable}[t]{0.48\textwidth}
\centering
\begin{tabular}{ccccccc}
\toprule
\textbf{(m,n)} & $\Delta t$ & $\bar{k}_{\mathrm{iter}}$ & $\bar{t}_{\mathrm{cmp}}$ & $\bar{c}$ & $\bar{t}_{\mathrm{comm}}$ & $T$ \\
\midrule
~ & 1 & 3714.69 & 49.85 & 13513.54 & 51.36 & 101.21 \\
~ & 3 & 2703.77 & 24.78 & 7395.69 & 29.48 & 54.26 \\
~ & 5 & 2167.08 & 19.11 & 4868.85 & 23.30 & 42.41 \\
~ & 7 & 2020.92 & 14.28 & 3200.77 & 14.48 & 28.76 \\
\textbf{30000$\times$3000} & 9 & 1920.00 & 13.10 & 2436.46 & 11.75 & 24.85 \\
$\mathrm{N}=13$ & 15 & 1801.54 & 11.21 & 1415.15 & 8.30 & 19.51 \\
~ & 30 & 1644.77 & 6.32 & 651.31 & 3.51 & 9.83 \\
~ & 50 & 2161.62 & 7.23 & 511.69 & 2.73 & 9.96 \\
~ & 70 & \textbf{1580.85} & \textbf{4.69} & \textbf{264.00} & \textbf{1.64} & \textbf{6.33} \\
~ & 100 & 2522.38 & 5.72 & 294.85 & 1.74 & 7.46 \\
\midrule
~ & 1 & 5506.89 & 116.21 & 20213.37 & 79.80 & 196.01 \\
~ & 3 & 4520.95 & 42.74 & 21555.37 & 37.47 & 80.21 \\
~ & 5 & 3371.58 & 15.79 & 11671.05 & 18.49 & 34.28 \\
~ & 7 & 3284.00 & 21.82 & 8180.26 & 20.71 & 42.53 \\
\textbf{40000$\times$4000} & 9 & 3185.68 & 34.57 & 6061.47 & 33.19 & 67.76 \\
$\mathrm{N}=19$ & 15 & 2741.53 & 24.19 & 3235.47 & 21.28 & 45.47 \\
~ & 30 & 2625.05 & 17.98 & 1549.68 & 11.69 & 29.67 \\
~ & 50 & 3329.42 & 19.07 & 1187.26 & 9.72 & 28.79 \\
~ & 70 & \textbf{2366.53} & \textbf{13.28} & \textbf{597.11} & \textbf{5.40} & \textbf{18.68} \\
~ & 100 & 3822.21 & 19.30 & 676.26 & 6.09 & 25.39 \\
\midrule
~ & 1 & 6062.09 & 48.96 & 116508.33 & 32.94 & 81.90 \\
~ & 3 & 4185.67 & 114.63 & 19897.40 & 14.56 & 129.19 \\
~ & 5 & \textbf{3066.58} & 53.30 & 16251.02 & 10.03 & 63.33 \\
~ & 7 & 7103.36 & 132.90 & 6563.36 & 14.37 & 147.27 \\
\textbf{50000$\times$3000} & 9 & 8793.76 & 152.13 & 6334.31 & 15.29 & 167.42 \\
$\mathrm{N}=45$ & 15 & 3506.82 & \textbf{30.15} & 6282.53 & \textbf{4.98} & \textbf{35.13} \\
~ & 30 & 5336.40 & 52.21 & 6049.38 & 8.04 & 60.25 \\
~ & 50 & 6913.33 & 72.92 & 3958.49 & 7.63 & 80.55 \\
~ & 70 & 8363.20 & 96.22 & 3693.44 & 11.35 & 107.57 \\
~ & 100 & 5381.36 & 58.75 & \textbf{2333.56} & 11.58 & 70.33 \\
\bottomrule
\end{tabular}
\caption{Set A}
\end{subtable}
\hfill
\begin{subtable}[t]{0.48\textwidth}
\centering
\begin{tabular}{ccccccc}
\toprule
\textbf{(m,n)} & $\Delta t$ & $\bar{k}_{\mathrm{iter}}$ & $\bar{t}_{\mathrm{cmp}}$ & $\bar{c}$ & $\bar{t}_{\mathrm{comm}}$ & $T$ \\
\midrule
~ & 1 & 6044.56 & 122.15 & 21962.06 & 103.16 & 225.31 \\
~ & 3 & 4995.69 & 64.16 & 14631.81 & 64.62 & 128.78 \\
~ & 5 & 4215.25 & 45.55 & 10946.44 & 58.25 & 103.80 \\
~ & 7 & 3536.50 & 31.98 & 7264.56 & 42.97 & 74.95 \\
\textbf{50000$\times$5000} & 9 & 3355.88 & 30.51 & 5428.81 & 35.73 & 66.24 \\
$\mathrm{N}=16$ & 15 & 3142.94 & 25.50 & 3121.62 & 25.62 & 51.12 \\
~ & 30 & 2796.06 & 11.26 & 1391.00 & 7.45 & 18.71 \\
~ & 50 & 3442.44 & 17.37 & 1023.56 & 9.49 & 26.86 \\
~ & 70 & \textbf{2661.88} & \textbf{12.70} & \textbf{563.19} & \textbf{5.06} & \textbf{17.76} \\
~ & 100 & 3932.88 & 15.64 & 581.94 & 5.41 & 21.05 \\
\midrule
~ & 1 & 8637.68 & 166.58 & 125995.15 & 52.27 & 218.85 \\
~ & 3 & 3280.25 & 46.89 & 42873.57 & 37.08 & 83.97 \\
~ & 5 & 6211.98 & 87.06 & 37079.83 & 21.88 & 108.94 \\
~ & 7 & 5313.42 & 63.79 & 23975.23 & 15.99 & 79.78 \\
\textbf{60000$\times$4000} & 9 & 3790.47 & 14.48 & 21633.25 & 12.78 & 27.26 \\
$\mathrm{N}=53$ & 15 & 5116.06 & 49.72 & 10735.79 & 10.55 & 60.27 \\
~ & 30 & \textbf{3006.96} & 7.55 & 5180.66 & \textbf{5.24} & \textbf{12.79} \\
~ & 50 & 13646.17 & 172.46 & 7578.92 & 16.51 & 188.97 \\
~ & 70 & 3365.83 & \textbf{7.07} & \textbf{2479.08} & 6.02 & 13.09 \\
~ & 100 & 6497.74 & 61.28 & 3142.98 & 7.54 & 68.82 \\
\midrule
~ & 1 & 6720.46 & 239.01 & 6907.38 & 43.64 & 282.65 \\
~ & 3 & 3167.88 & 51.13 & 12196.04 & 29.84 & 80.97 \\
~ & 5 & 3884.27 & 21.13 & 17713.65 & 19.17 & 40.30 \\
~ & 7 & 5543.04 & 65.57 & 14847.38 & 33.70 & 99.27 \\
\textbf{80000$\times$5000} & 9 & 6585.15 & 91.30 & 14110.08 & 45.20 & 136.50 \\
$\mathrm{N}=26$ & 15 & 2642.23 & 23.19 & 4346.42 & 22.18 & 45.37 \\
~ & 30 & 3092.42 & 33.73 & 2495.65 & 21.83 & 55.56 \\
~ & 50 & 3715.15 & 32.28 & 1842.69 & 16.94 & 49.22 \\
~ & 70 & \textbf{2593.65} & \textbf{19.15} & \textbf{915.96} & \textbf{8.85} & \textbf{28.00} \\
~ & 100 & 4252.77 & 33.87 & 1047.54 & 12.46 & 46.33 \\
\bottomrule
\end{tabular}
\caption{Set B}
\end{subtable}
\label{tab:intervals}
\end{table}

\textbf{Sensitivity to Communication Interval:} Keeping the number of agents and neighbor degree fixed, we vary only the communication interval $\Delta t$. The experiments consistently reveal a clear \textit{iteration–communication trade-off} across all problem scales:

\begin{enumerate}
    \item[(i)] \textbf{High-Frequency Communication} ($\Delta t \le 3$) leads to message flooding, with total communication count and time increasing substantially. ROS2 topic queues become congested, introducing queuing delays and elevating wall-clock time $T$.
    \item[(ii)] \textbf{Moderate Communication Frequency} ($\Delta t \approx 9$–$30$, depending on problem size) maintains low iteration counts while significantly reducing communication overhead, thereby minimizing $T$. For example, $30000 \times 3000$ achieves $T_{\min} = 6.33$s at $\Delta t = 70$; $40000 \times 4000$ reaches $29.67$s at $\Delta t = 30$; and $50000 \times 3000$ attains an optimal $T = 35.13$s at $\Delta t = 15$.
    \item[(iii)] \textbf{Sparse Communication} ($\Delta t \ge 70$) minimizes communication time but slows down information propagation, causing iteration counts to rebound sharply. For instance, in the $30000 \times 3000$ case, $\bar{k}$ first drops from $1644$ to $1580$ as $\Delta t$ increases, but rises again to $2522$ at $\Delta t = 100$, driving $T$ back up. Systems with higher row-to-column ratios and greater relative communication overhead tend to prefer smaller $\Delta t$, aligning with the strategy of “early convergence with less communication” under constrained node degrees.
\end{enumerate}

These findings confirm that \textbf{both overly frequent and overly sparse communication degrade overall efficiency}. Instead, there exists a \textit{problem-size-dependent optimal interval range} that balances fast convergence and minimal communication burden.

However, periodic communication remains fundamentally limited in adapting to real-time system dynamics. When local residuals stagnate or sudden network congestion occurs, periodic triggers cannot respond immediately, leading to inefficiencies. Therefore, \emph{further research on adaptive event-triggered strategies}—based on residual thresholds, bandwidth monitoring, or load feedback—is warranted. Such methods may preserve current optimal $T$ while compressing peak communication cost, enabling scalable deployment under even larger or denser topologies.

\paragraph{Robustness Evaluation}

To validate the robustness of the proposed algorithm, we design experiments simulating agent failures—an inevitable challenge in real-world distributed deployments. One key advantage of asynchronous distributed algorithms lies in their intrinsic tolerance to such partial failures, as agents operate independently without requiring global synchronization.

To emulate failure scenarios, we implement a \emph{single-parameter self-halting mechanism} within each agent. Specifically, given a positive parameter $\xi > 0$ (referred to as the \textbf{failure intensity}), each agent samples a failure arrival interval $K$ from an exponential distribution over its local iteration timeline:
\[K \sim \mathrm{Exp}(\xi), \qquad \mathbb{E}[K] = \frac{1}{\xi}.\]
After completing $K$ local iterations, the agent enters a halt state and suspends all computation for a duration sampled from an independent exponential distribution:
\[T_{\text{halt}} \sim \mathrm{Exp}(1/\xi), \qquad \mathbb{E}[T_{\text{halt}}] = \xi.\]
Once halted, the agent automatically resumes and restarts the cycle by drawing a new failure interval $K$. Thus, the parameter $\xi$ jointly governs the \emph{frequency of interruptions} (mean $1/\xi$ iterations between failures) and the \emph{severity of halts} (mean downtime $\xi$ seconds), offering a unified control of both \textbf{failure probability} and \textbf{disruption magnitude}.

In each experiment, we first fix the total number of agents $N$ and then randomly select a subset of agents to activate the failure mechanism. The number of failed nodes $N_{\text{stop}}$ is drawn from:
\[\Big\{ \mathrm{N}_{\text{stop}} \;\Big|\; \mathrm{N}_{\text{stop}} = \big\lceil \mathrm{N} \cdot \theta_3 \big\rceil,\ \theta_3 \in \big\{0.05, 0.10, \ldots, 0.40\big\} \Big\}.\]
The remaining agents operate normally. By gradually increasing both $\mathrm{N}_{\text{stop}}$ and $\xi$, we systematically observe the algorithm’s convergence characteristics and performance degradation under dual perturbations of \emph{failure ratio} and \emph{failure intensity}.

This experimental setup allows us to demonstrate the asynchronous framework’s resilience: \textbf{local agent failures do not compromise global convergence}. Final results are reported in Tables~\ref{tab:failure_ratio} and~\ref{tab:failure_duration}.

\begin{table}[!t]
\centering
\footnotesize
\caption{Performance Under Different Failure Ratios Across Problem Sizes}
\begin{subtable}[t]{0.48\textwidth}
\centering
\begin{tabular}{cccccccc}
\toprule
\textbf{(m,n)} & $\rho$ & $\bar{k}_{\mathrm{stop}}$ & $\bar{t}_{\mathrm{stop}}$ & $\bar{k}_{\mathrm{iter}}$ & $\bar{t}_{\mathrm{cmp}}$ & $\bar{c}$ & $\bar{t}_{\mathrm{comm}}$ \\
\midrule
~ & 10\% & 34 & 0.79 & 2003 & 9.51 & 915.77 & 5.69 \\
~ & 15\% & 33.5 & 0.75 & 1935.38 & 8.94 & 885.08 & 4.87 \\
~ & 20\% & 33.33 & \textbf{0.55} & 1976.31 & 9.24 & 890.85 & 4.70 \\
\textbf{30000$\times$3000} & 25\% & 28.25 & \textbf{0.55} & 1932.92 & 9.50 & 867.54 & 5.01 \\
$\mathrm{N}=13$ & 30\% & 37.25 & 0.77 & 1973.54 & 9.91 & 865.23 & 5.02 \\
~ & 35\% & 34.6 & 0.60 & 1975.46 & 9.96 & 861.92 & 4.93 \\
~ & 40\% & \textbf{29.5} & 0.60 & \textbf{1752.31} & \textbf{4.69} & \textbf{785.38} & \textbf{2.58} \\
~ & 50\% & 34.29 & 0.65 & 1962.46 & 9.85 & 817.54 & 4.94 \\
\midrule
~ & 10\% & 51 & 1.08 & 3032.68 & 23.80 & 2128.42 & 15.65 \\
~ & 15\% & 50.33 & 1.05 & 2673.79 & \textbf{12.90} & 1870.05 & \textbf{9.06} \\
~ & 20\% & 50.5 & 0.94 & 2993.58 & 23.24 & 2054.53 & 15.55 \\
\textbf{40000$\times$4000} & 25\% & \textbf{41.8} & \textbf{0.74} & \textbf{2472.63} & 14.78 & \textbf{1704.00} & 10.65 \\
$\mathrm{N}=19$ & 30\% & 54.17 & 1.02 & 2917.00 & 20.87 & 1953.58 & 14.27 \\
~ & 35\% & 56.57 & 1.18 & 3001.89 & 22.81 & 1971.68 & 15.49 \\
~ & 40\% & 53.5 & 1.08 & 3001.47 & 23.49 & 1966.00 & 14.33 \\
~ & 50\% & 53.3 & 1.15 & 2927.37 & 22.25 & 1872.42 & 13.96 \\
\midrule
~ & 10\% & 97.4 & 2.03 & 5571.16 & 54.23 & 4268.96 & 5.37 \\
~ & 15\% & 121.86 & 2.55 & 7451.60 & 84.73 & 6773.60 & 10.08 \\
~ & 20\% & 55.78 & 1.12 & 2982.47 & 21.37 & 3836.91 & 3.76 \\
\textbf{50000$\times$3000} & 25\% & 72.5 & 1.43 & 4213.29 & 36.59 & 4134.02 & 4.42 \\
$\mathrm{N}=45$ & 30\% & \textbf{39.86} & \textbf{0.84} & \textbf{2132.80} & \textbf{7.80} & \textbf{3649.84} & \textbf{3.36} \\
~ & 35\% & 106.31 & 2.10 & 5992.38 & 57.03 & 4163.18 & 5.49 \\
~ & 40\% & 103.28 & 2.10 & 5821.87 & 65.21 & 4370.24 & 9.32 \\
~ & 50\% & 151.78 & 3.04 & 8634.80 & 95.91 & 4687.00 & 7.52 \\
\bottomrule
\end{tabular}
\caption{Set A}
\end{subtable}
\hfill
\begin{subtable}[t]{0.48\textwidth}
\centering
\begin{tabular}{cccccccc}
\toprule
\textbf{(m,n)} & $\rho$ & $\bar{k}_{\mathrm{stop}}$ & $\bar{t}_{\mathrm{stop}}$ & $\bar{k}_{\mathrm{iter}}$ & $\bar{t}_{\mathrm{cmp}}$ & $\bar{c}$ & $\bar{t}_{\mathrm{comm}}$ \\
\midrule
~ & 10\% & 61 & \textbf{1.04} & 3431.31 & 23.81 & 2001.38 & 17.22 \\
~ & 15\% & 59.33 & 1.18 & 3400.88 & 24.26 & 1950.06 & 17.15 \\
~ & 20\% & 62 & 1.33 & 3439.62 & 25.51 & 1961.44 & 17.10 \\
\textbf{50000$\times$5000}  & 25\% & 59.5 & 1.14 & \textbf{3071.31} & \textbf{13.31} & 1769.31 & \textbf{10.45} \\
$\mathrm{N}=16$ & 30\% & 60.8 & 1.26 & 3293.56 & 23.30 & 1844.50 & 16.86 \\
~ & 35\% & 60.5 & 1.23 & 3302.56 & 22.87 & 1824.38 & 15.92 \\
~ & 40\% & 65.29 & 1.26 & 3321.44 & 24.36 & 1814.81 & 15.90 \\
~ & 50\% & \textbf{56.5} & 1.15 & 3238.69 & 22.91 & \textbf{1767.94} & 16.03 \\
\midrule
~ & 10\% & 73.5 & 1.41 & 4013.11 & 24.60 & 6487.09 & 6.78 \\
~ & 15\% & 56.5 & \textbf{1.04} & 3241.32 & 13.68 & 6523.62 & 6.27 \\
~ & 20\% & 121.27 & 2.41 & 7009.47 & 79.93 & 7108.40 & 10.55 \\
\textbf{60000$\times$4000} & 25\% & 65.21 & 1.37 & 3373.98 & 15.35 & 6353.72 & 5.95 \\
$\mathrm{N}=53$ & 30\% & 60.81 & 1.21 & 3086.11 & \textbf{10.28} & 6232.08 & \textbf{5.79} \\
~ & 35\% & \textbf{49.16} & 1.05 & \textbf{2627.09} & 17.96 & \textbf{4496.43} & 10.70 \\
~ & 40\% & 75.77 & 1.54 & 3882.11 & 24.03 & 6289.15 & 6.45 \\
~ & 50\% & 182.59 & 3.59 & 10136.17 & 118.97 & 7081.17 & 11.41 \\
\midrule
~ & 10\% & 60.67 & 1.20 & 3421.73 & 29.61 & 3340.69 & 18.58 \\
~ & 15\% & 62.25 & 1.18 & 3576.00 & 39.62 & 3449.12 & 24.68 \\
~ & 20\% & 59.83 & 1.17 & 3242.50 & 20.74 & 3128.42 & 12.61 \\
\textbf{80000$\times$5000} & 25\% & \textbf{50.43} & \textbf{1.01} & 2811.12 & \textbf{8.60} & 2730.38 & 7.30 \\
$\mathrm{N}=26$ & 30\% & 73.75 & 1.44 & 4155.62 & 42.31 & 3280.04 & 21.79 \\
~ & 35\% & 55.60 & 1.08 & 2817.35 & 11.69 & 2704.69 & 8.51 \\
~ & 40\% & 58.18 & 1.17 & 3202.12 & 18.75 & 3006.38 & 11.69 \\
~ & 50\% & 58.77 & 1.16 & \textbf{2800.77} & 9.46 & \textbf{2672.58} & \textbf{7.02} \\
\bottomrule
\end{tabular}
\caption{Set B}
\end{subtable}
\label{tab:failure_ratio}
\end{table}

\begin{table}[H]
\centering
\footnotesize
\caption{Performance Under Different Failure Durations Across Problem Sizes}
\begin{subtable}[t]{0.48\textwidth}
\centering
\begin{tabular}{cccccccc}
\toprule
\textbf{(m,n)} & $\xi$ & $\bar{k}_{\mathrm{stop}}$ & $\bar{t}_{\mathrm{stop}}$ & $\bar{k}_{\mathrm{iter}}$ & $\bar{t}_{\mathrm{cmp}}$ & $\bar{c}$ & $\bar{t}_{\mathrm{comm}}$ \\
\midrule
~ & 0.01\% & \textbf{17.33} & \textbf{0.18} & 2000.69 & 11.63 & 925.38 & 6.01 \\
~ & 0.02\% & 32.67 & 0.64 & 1901.15 & 9.41 & 855.92 & 5.51 \\
~ & 0.04\% & 49.67 & 1.76 & 1946.69 & 10.10 & 846.69 & 5.39 \\
\textbf{30000$\times$3000} & 0.05\% & 61.67 & 3.04 & 1873.54 & 10.61 & 775.92 & 4.76 \\
$\mathrm{N}=13$ & 0.06\% & 66.00 & 3.73 & 1890.85 & \textbf{9.24} & 782.54 & 4.51 \\
~ & 0.07\% & 67.00 & 4.57 & \textbf{1871.85} & 9.89 & 754.62 & 4.67 \\
~ & 0.09\% & 76.67 & 6.03 & 1987.15 & 10.60 & 790.69 & 5.01 \\
~ & 0.10\% & 71.33 & 7.01 & 1899.23 & 9.28 & \textbf{736.38} & \textbf{4.35} \\
\midrule
~ & 0.01\% & \textbf{26.25} & \textbf{0.24} & 3053.32 & 22.24 & 2133.00 & 15.22 \\
~ & 0.02\% & 63.25 & 1.38 & 3185.53 & 26.13 & 2163.89 & 15.88 \\
~ & 0.04\% & 81.00 & 2.77 & 2949.05 & 22.60 & 1964.89 & 14.60 \\
\textbf{40000$\times$4000}
 & 0.05\% & 101.75 & 5.09 & 2986.47 & 21.40 & 1940.26 & 14.22 \\
$\mathrm{N}=19$ & 0.06\% & 111.25 & 6.77 & 2916.68 & \textbf{20.20} & 1851.84 & 12.97 \\
~ & 0.07\% & 119.25 & 9.20 & 2958.53 & 21.97 & 1845.89 & 13.29 \\
~ & 0.09\% & 125.00 & 11.15 & 2874.58 & 20.98 & 1774.84 & 13.12 \\
~ & 0.10\% & 126.00 & 12.46 & \textbf{2845.32} & 19.25 & \textbf{1731.05} & \textbf{12.38} \\
\midrule
~ & 0.01\% & \textbf{40.67} & \textbf{0.42} & 4671.91 & 48.05 & 1963.87 & 4.73 \\
~ & 0.02\% & 44.56 & 1.06 & 2269.62 & 17.94 & 1708.53 & 5.49 \\
~ & 0.04\% & 243.00 & 8.78 & 9933.69 & 112.03 & 2670.87 & 9.98 \\
\textbf{50000$\times$3000} & 0.05\% & 185.56 & 8.91 & 5552.96 & 58.54 & 1660.07 & 5.58 \\
$\mathrm{N}=45$ & 0.06\% & 113.89 & 6.97 & 2682.18 & 23.11 & 1810.36 & 5.73 \\
~ & 0.07\% & 116.33 & 8.17 & \textbf{2202.69} & \textbf{12.83} & \textbf{1469.11} & \textbf{2.21} \\
~ & 0.09\% & 580.78 & 49.67 & 14526.20 & 167.51 & 1803.29 & 7.73 \\
~ & 0.10\% & 669.11 & 67.25 & 17094.64 & 212.40 & 1508.00 & 10.89 \\
\bottomrule
\end{tabular}
\caption{Set A}
\end{subtable}
\hfill
\begin{subtable}[t]{0.48\textwidth}
\centering
\begin{tabular}{cccccccc}
\toprule
\textbf{(m,n)} & $\xi$ & $\bar{k}_{\mathrm{stop}}$ & $\bar{t}_{\mathrm{stop}}$ & $\bar{k}_{\mathrm{iter}}$ & $\bar{t}_{\mathrm{cmp}}$ & $\bar{c}$ & $\bar{t}_{\mathrm{comm}}$ \\
\midrule
~ & 0.01\% & \textbf{31.75} & \textbf{0.31} & 3433.56 & 23.46 & 2003.62 & 16.88 \\
~ & 0.02\% & 66.00 & 1.56 & 3407.44 & 25.15 & 1925.62 & 16.30 \\
~ & 0.04\% & 97.25 & 3.31 & 3284.81 & 23.97 & 1798.81 & 15.42 \\
\textbf{50000$\times$5000} & 0.05\% & 106.00 & 5.69 & 3221.62 & 21.58 & 1715.81 & 15.06 \\
$\mathrm{N}=16$ & 0.06\% & 134.00 & 7.71 & 3340.25 & 24.29 & 1726.56 & 15.43 \\
~ & 0.07\% & 145.25 & 10.93 & 3284.75 & 23.77 & 1683.12 & 15.14 \\
~ & 0.09\% & 143.25 & 12.60 & \textbf{3154.94} & \textbf{20.52} & \textbf{1562.12} & \textbf{13.51} \\
~ & 0.10\% & 142.50 & 15.47 & 3220.06 & 21.68 & 1595.38 & 14.27 \\
\midrule
~ & 0.01\% & 88.18 & \textbf{0.91} & 9586.72 & 133.81 & 4597.49 & 21.88 \\
~ & 0.02\% & 234.27 & 5.41 & 12024.42 & 178.53 & 3762.47 & 20.03 \\
~ & 0.04\% & \textbf{77.36} & 2.74 & \textbf{2570.06} & \textbf{25.88} & \textbf{1907.89} & 9.35 \\
\textbf{60000$\times$4000} & 0.05\% & 380.73 & 18.61 & 11638.08 & 156.61 & 3397.47 & 15.83 \\
$\mathrm{N}=53$ & 0.06\% & 225.36 & 13.79 & 5277.43 & 52.96 & 2138.75 & \textbf{6.80} \\
~ & 0.07\% & 498.27 & 36.57 & 11817.42 & 141.29 & 1910.47 & 6.70 \\
~ & 0.09\% & 419.27 & 36.23 & 8874.11 & 101.80 & 2107.53 & 7.88 \\
~ & 0.10\% & 726.82 & 71.98 & 17059.15 & 227.04 & 2989.38 & 17.78 \\
\midrule
~ & 0.01\% & \textbf{34.50} & \textbf{0.35} & 3500.88 & 34.05 & 2619.46 & 24.30 \\
~ & 0.02\% & 68.50 & 1.41 & 3345.00 & 33.38 & 2486.38 & 19.53 \\
~ & 0.04\% & 84.17 & 3.31 & \textbf{2694.77} & \textbf{13.12} & \textbf{1996.00} & \textbf{9.34} \\
\textbf{80000$\times$5000} & 0.05\% & 109.17 & 5.04 & 3133.19 & 32.06 & 2252.00 & 21.94 \\
$\mathrm{N}=26$ & 0.06\% & 128.33 & 7.94 & 3218.96 & 31.42 & 2220.81 & 21.41 \\
~ & 0.07\% & 147.83 & 10.91 & 3070.00 & 30.47 & 2121.96 & 18.39 \\
~ & 0.09\% & 153.33 & 13.62 & 3082.58 & 35.00 & 2047.15 & 21.79 \\
~ & 0.10\% & 166.83 & 16.95 & 3143.23 & 34.53 & 2082.31 & 19.99 \\
\bottomrule
\end{tabular}
\caption{Set B}
\end{subtable}
\label{tab:failure_duration}
\end{table}

\textbf{Summary of Failure Robustness:} By injecting exponential-distribution-based self-halting mechanisms into a randomly selected subset of $\lceil \mathrm{N}\theta_3\rceil$ agents, we systematically evaluate how varying the \emph{failure ratio} $\theta_3 \in [0.05, 0.40]$ and \emph{failure intensity} $\xi \in [10^{-4}, 10^{-3}]$ affects global system performance. The results reveal the following consistent insights:

First, the \textbf{failure ratio is nearly linearly tolerable for small to medium systems}. For example, in configurations such as $30000 \times 3000$ ($\mathrm{N} = 13$) and $40000 \times 4000$ ($\mathrm{N} = 19$), even with $\theta_3$ increased to $0.40$, both iteration count and wall-clock time fluctuate by no more than $\pm10\%$. This confirms the inherent resilience of asynchronous architectures to localized agent failures.

However, as the problem scale grows and the agent count increases significantly (e.g., $50000 \times 3000$ with $\mathrm{N} = 45$), we observe \textbf{non-monotonic performance degradation}. When $\theta_3 \le 0.30$, the remaining active agents can efficiently compensate for halted ones. But once $\theta_3 \ge 0.35$, the communication graph becomes fragmented, diffusion paths are extended, and both iteration count and wall-clock time increase exponentially.

Second, \textbf{failure intensity exhibits a threshold effect}. When $\xi \le 2 \times 10^{-4}$ (mean halt duration $\sim$0.5s, interval $\sim$5000 iterations), all metrics degrade only mildly. In contrast, increasing $\xi$ to the range $[7 \times 10^{-4}, 10^{-3}]$ introduces frequent and prolonged halts. In larger-scale systems (e.g., $60000 \times 4000$ and $50000 \times 3000$), this causes iteration count and communication overhead to explode, with wall-clock time increasing by over $100\%$. The root cause of this inflection lies in the implicit coupling between \emph{error contraction rate} and \emph{effective communication frequency}: excessive $\xi$ prolongs local inactivity windows and accumulates residuals, undermining the contractive property of asynchronous averaging.

In summary, the algorithm demonstrates \textbf{graceful degradation under moderate failure ratios and low-to-moderate intensities}, preserving convergence without collapse. This validates its design principle of \emph{local self-healing with global feasibility}. Nevertheless, in larger-scale or high-density settings, it is crucial to impose upper bounds on both $\theta_3$ and $\xi$, or adopt finer-grained strategies for \emph{failure detection and dynamic reconnection} to sustain pre-failure convergence rates.

\paragraph{Inconsistent Systems: Sensitivity to $\lambda$ }

In this section, we investigate the influence of the regularization parameter $\lambda$ on the algorithm’s performance when solving the same inconsistent linear system. The value of $\lambda$ is varied over the following set:
\[\lambda \in \{0,\ 0.3,\ 0.7,\ 1,\ 1.3,\ 1.7,\ 2,\ 2.3,\ 2.7,\ 3\}.\]

Table~\ref{tab:不同 lambda 对算法性能的影响} reports the corresponding stopping error and average computation time, while Figure~\ref{fig:2} provides a visual comparison of convergence trajectories under different $\lambda$ values.

\begin{table}[H]
\centering
\caption{Effect of $\lambda$ on Algorithm Performance for Inconsistent Systems}
\begin{tabular}{cccc}
\toprule
\textbf{(m,n)} & $\lambda$ & $\mathrm{e}_{\mathrm{stop}}$ & $\bar{t}_{\mathrm{comm}}$ \\
\midrule
& 0   & 2.40  & 48.27 \\
& 0.3 & 1.99  & 48.95 \\
& 0.7 & 1.95  & 49.15 \\
& 1.0 & 2.93  & 53.98 \\
\textbf{60,000$\times$3,000} & 1.3 & 4.04  & 50.54 \\
$\mathrm{N}=25$ & 1.7 & 7.13  & 50.30 \\
& 2.0 & 9.50  & 49.15 \\
& 2.3 & 10.30 & 49.01 \\
& 2.7 & 14.63 & 26.72 \\
& 3.0 & 16.19 & 47.63 \\
\bottomrule
\end{tabular}
\label{tab:不同 lambda 对算法性能的影响}
\end{table}

\begin{figure}[H]
\centering
\includegraphics[width=\textwidth]{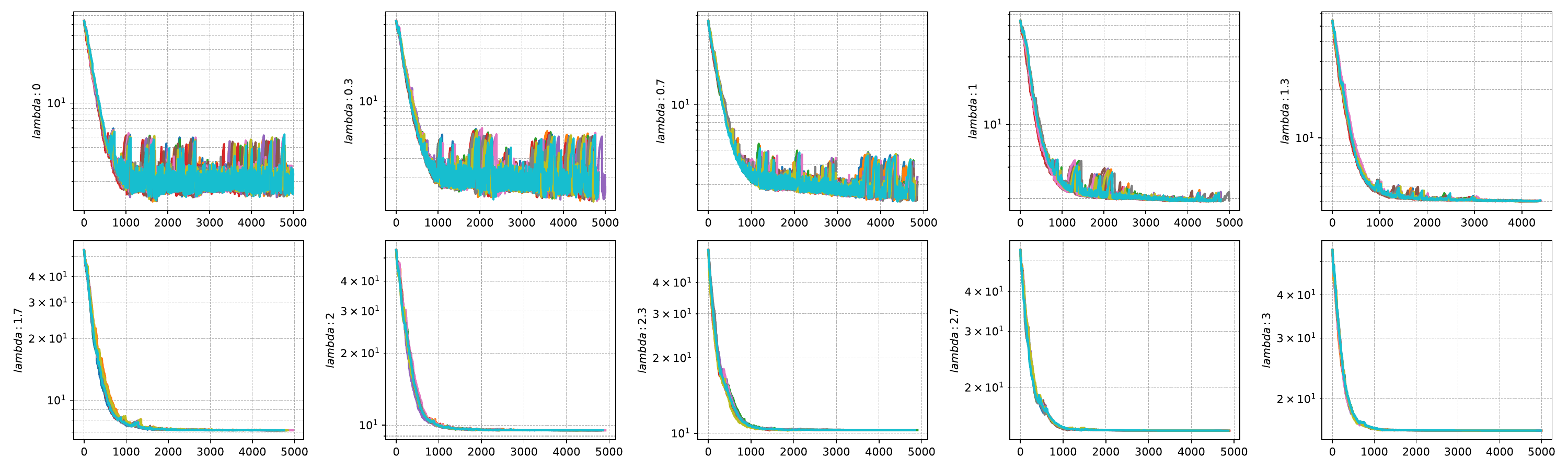}
\caption{Convergence Trajectories for Different $\lambda$ Values}
\label{fig:2}
\end{figure}

\paragraph*{Results Analysis and Insights}

Both Table~\ref{tab:不同 lambda 对算法性能的影响} and Figure~\ref{fig:2} reveal the \emph{dual role} of $\lambda$ in handling inconsistent systems: on one hand, $\lambda$ acts as a regularization term that suppresses residuals more aggressively as it increases, thereby accelerating convergence. On the other hand, a large $\lambda$ forcibly biases the search direction toward a “pseudo-consistent” subspace, leading to growing \emph{bias error}—a trend aligned with our theoretical findings in \textbf{Theorem 3}.

In detail, when $\lambda \le 0.7$, the stopping error remains below $2$, but the convergence curves exhibit mild oscillations and slow descent. Around $\lambda \approx 1.0$–$1.3$, the algorithm achieves a \emph{sweet spot} balancing speed and accuracy—converging within approximately 50 seconds with error still below $4$. As $\lambda$ increases beyond $1.7$, the stopping error escalates rapidly (reaching $14.63$ at $\lambda = 2.7$), while convergence speed plateaus, yielding diminishing returns.

\textbf{Practical Guideline:} For problems of similar scale and noise levels, we recommend a coarse search with $\lambda \in [0.7, 1.3]$, followed by fine-tuning based on specific accuracy needs. For scenarios demanding both rapid early convergence and precise final accuracy, a \emph{time-varying $\lambda$ strategy} may be beneficial—e.g., a piecewise linear decay from $\lambda = 1.3$ to $0.3$, or adaptively adjusting $\lambda$ based on residual magnitude. Such strategies preserve fast initial descent while mitigating late-stage bias, offering a promising direction for future refinement.

\section{Conclusion}

This paper presents and systematically evaluates a novel algorithmic framework for solving large-scale linear systems under resource-constrained and communication-unstable environments. The proposed method integrates event-triggered communication, asynchronous updates, and distributed randomized block Kaczmarz projection, with a focus on engineering-oriented deployment. 

First, by employing the randomized block Kaczmarz algorithm, the per-agent computational complexity per iteration is reduced from $ O(n^2)$ to $O(m_i(t_{i,k})n)$, while the newly introduced event-triggered strategy optimizes communication frequency, achieving coordinated reduction in both computational and communication overhead.

Second, a unified theoretical analysis of convergence is presented  for both consistent and inconsistent systems. For consistent systems, the algorithm achieves almost sure exponential convergence under stable communication conditions. For inconsistent systems, an augmented variable transformation converts them into consistent forms, with derived error bounds quantifying the relationship between precision and regularization parameter $\lambda$.

Furthermore, systematic experiments conducted on the ROS2 platform validate the impacts of key parameters—including node partitioning ratios, neighbor degrees, communication intervals, robustness boundaries, and regularization factor ranges—on algorithmic performance, while providing practical tuning guidelines. Experimental results demonstrate that the proposed fully asynchronous distributed Kaczmarz projection algorithm not only exhibits strong theoretical convergence guarantees but also superior scalability and stability. The implementation details and parameter-tuning strategies offer actionable engineering guidance for real-world applications in edge computing and industrial systems.

Although this work achieves both theoretical and experimental advances, several limitations remain. For instance, in inconsistent system scenarios, the current augmentation strategy only approximates a least-squares solution without exact convergence. Future work may explore residual-based variable step-size mechanisms or hybrid approaches combining augmented Lagrangian/exterior-point projections to enhance precision control. Additionally, experimental scalability was constrained by hardware resources, leaving large-scale system validation open for investigation. Possible extensions include deployment on high-bandwidth interconnected clusters or GPU/FPGA hybrid architectures, where the ROS2 communication framework can be retained while customizing underlying transport and scheduling mechanisms. Other promising directions include developing residual/bandwidth-driven adaptive event-triggering, ultra-large-scale implementations integrating randomized block methods with GPU parallelism, and designing safety-robust mechanisms against node failures or unreliable links.

\section*{Appendix: Proof of Theorem}

The proof of \textbf{Theorem 2} is based on constructing a finite-length sequence of agent states ${x_i(\tau_s), x_i(\tau_{s-1}), \dots, x_i(\tau_{s-d})}$, thereby extending the analysis from individual time points to multiple consecutive time steps. This temporal expansion enables the capture of asynchronous characteristics that influence global system dynamics.

Leveraging the asynchronous modeling framework proposed in~\cite{Liu2018Asynchronous}, the evolution of error sequences across agents can be formalized into a global state transition equation. The transition matrix is block-structured, with each block corresponding to an orthogonal projection operator defined over a subspace.

To accommodate both full-rank and rank-deficient cases, we decompose the global state space structurally. One subset of state variables evolves independently of the projection operators and can be interpreted as a multi-agent consensus process under asynchronous updates. Its stability and convergence have been thoroughly analyzed in~\cite{doi:10.1137/060657005}. The remaining variables are directly influenced by the projection matrices; their convergence behavior forms the crux of the proof and reduces to an asynchronous random projection consensus problem.

The convergence analysis for such systems follows the technical blueprint laid out in~\cite{Liu2018Asynchronous} and~\cite{Yi2020Distributed}. The former addresses projection consensus under asynchronous mechanisms, while the latter analyzes random projection systems in full-rank scenarios. Both approaches rely on establishing a strict error contraction condition—namely, demonstrating that, in expectation, the product of projection matrices shrinks system error. However, existing results do not explicitly address convergence under rank deficiency, necessitating extensions to the theoretical framework.

An additional technical challenge arises from the need to integrate two stochastic components: random block selection and time-varying communication topologies. These must be combined to construct a state transition matrix that guarantees error decay. The strong connectivity of the communication graph is characterized in~\cite{Yi2020Distributed} using the *-mixing property of stochastic graph processes. Under asynchronous conditions, traditional static graphs fail to capture the true dependency structure of the system.

To overcome this, we adopt the delay-graph modeling technique introduced in~\cite{Liu2018Asynchronous}, where the original system is expanded into an augmented graph with $d\mathrm{N}$ nodes. Each agent’s behavior at different delay stages is represented by a separate node. Using advanced graph-theoretic tools—such as strong rootedness and quotient graphs—we can rigorously characterize the strong connectivity of this augmented structure and establish the required joint connectivity events.

\textbf{Summary of Proof Strategy and Contributions:} In summary, the core structure of the proof hinges on four key components:

(1) \textbf{Unified State-Space Modeling:} We first construct a unified state-space representation of the system and perform a structural decomposition, isolating the projection-driven subdynamics as the primary object of analysis.

(2) \textbf{Generalized Contraction Conditions:} Then extend the strict error contraction criterion to accommodate rank-deficient scenarios, thereby establishing convergence under more general conditions.

(3) \textbf{Probabilistic Connectivity Guarantees:} Next, we use the theorem’s assumptions to construct probabilistic events that jointly ensure error reduction, including completeness of random block selection and strong connectivity of the augmented delay graph.

(4) \textbf{Integration with Non-Projection Dynamics:} Finally, we incorporate the non-projection component of the system evolution to complete a full convergence proof in the asynchronous setting.

The main technical contribution of this proof lies in its formulation of a convergence condition applicable to rank-deficient systems, thereby advancing the convergence theory of random projection-based systems under asynchronous updates. Furthermore, we introduce a communication design principle grounded in probabilistic language: the key convergence condition is translated into a practical engineering constraint that long-term communication failure must occur with zero probability.

Detailed technical development is provided in the subsections \textit{State-Space Formulation}, \textit{Strict Contraction Analysis}, and \textit{Graph-Theoretic Foundations}, which together form a complete and rigorous convergence proof.

\subsection{State-Space Formulation}

This section derives the state transition equation for the delayed sequence and presents its decomposition. Since the delay $\delta_{j,i}(t_{i,k})$ is bounded, we denote the global delay upper bound by $\delta$. Let $T = \min\limits_i {T_i}$ be the minimum local iteration interval. Then the maximum delay length is given by $d = \left\lceil \frac{\delta}{T} \right\rceil$.

Let $x^* = A^\dagger b$ denote the minimum-norm solution to the consistent system. The iteration update satisfies:
\[\begin{aligned}
x_i(t_{i,k+1}) & - x^* = \frac{1}{d_i(t_{i,k})} w_i(t_{i,k}) - x^*  - A_i^\dagger(t_{i,k}) A_i(t_{i,k}) \left( \frac{1}{d_i(t_{i,k})} w_i(t_{i,k}) - x^* \right) \\
&\quad + A_i^\dagger(t_{i,k}) b_i(t_{i,k}) - A_i^\dagger(t_{i,k}) A_i(t_{i,k}) x^* = P_i(t_{i,k}) \left( \frac{1}{d_i(t_{i,k})} w_i(t_{i,k}) - x^* \right),
\end{aligned}\]
where $P_i(t_{i,k}) = I - A_i^\dagger(t_{i,k}) A_i(t_{i,k})$ denotes the projection operator.

Define the local error $e_i(\tau_s) = x_i(\tau_s) - x^*$, the global error vector $e(\tau_s) = \big[e_1(\tau_s)^T, \dots, e_N(\tau_s)^T\big]^T \in \mathbb{R}^{Nn}$, and the delay-augmented error vector $\mathbf{e}(\tau_s) = \big[e(\tau_s)^T, \dots, e(\tau_{s-d})^T\big]^T \in \mathbb{R}^{dNn}$.

Define the time-dependent projection
matrix for agent $i$ as:
\[P_i(\tau_k) =
\begin{cases}
P_i(t_{i,k}), & \text{if } t_{i,k} = \tau_k, \\
I_n, & \text{otherwise}.
\end{cases}\]

The communication weight matrix $W_{i,j}(\tau_k)$ is defined as:
\[W_{i,j}(\tau_k) =
\begin{cases}
\frac{1}{m_i(\tau_k)}, & \text{if } \exists p \in \mathcal{N}_i(\tau_k),\ l \in \{1,\dots,d\}, \text{s.t. } j = p + lN, \ i \text{ receives from } p \text{ at } \tau_{k-l}, \\
1, & \text{if } i = p + lN,\ j = p + (l{-}1)N, p \in \{1,\dots,N\},\ l \in \{1,\dots,d\}, \\
0, & \text{otherwise}.
\end{cases}\]

This matrix $W(\tau_k)$ coincides with the delay-graph formulation introduced in~\cite{Liu2018Asynchronous}, where each node-delay pair is represented as a distinct node in an augmented graph with $dN$ total nodes.

Let $P(\tau_k) = \text{diag}(P_1(\tau_k), \dots, P_N(\tau_k))$ and define the block-diagonal projection matrix:
\[\mathbf{P}(\tau_k) = \text{diag}(P(\tau_k), P(\tau_{k-1}), \dots, P(\tau_{k-d})),\]
noting that $\mathbf{P}^2(\tau_k) = \mathbf{P}(\tau_k)$.

The update rule can now be rewritten in compact form as:
\begin{equation}
\mathbf{e}(\tau_{k+1}) = \mathbf{P}(\tau_k) \left(W(\tau_k) \otimes I_n\right) \mathbf{P}(\tau_{k-1}) \mathbf{e}(\tau_k),
\label{eq:未分解的系统}
\end{equation}
which constitutes a linear time-varying autonomous discrete-time system.

\subsection{Subspace Decomposition and Convergence Matrix Construction}

Since the matrix $A$ is general (i.e., not necessarily full rank), the sequential product of projection matrices constructed from its row blocks cannot ensure strict contraction for error vectors across the full space. However, such contraction can still be achieved within specific subspaces. Therefore, we decompose the system~\eqref{eq:未分解的系统} accordingly.

Let $\mathbb{R}^n = \text{Row}(A) \oplus \text{Row}_\perp(A)$ denote the orthogonal decomposition of the ambient space. Consequently, the full delayed error space $\mathbb{R}^{dNn}$ can also be decomposed as $\left[\text{Row}(A)\right]^{dN} \oplus \left[\text{Row}_\perp(A)\right]^{dN}$. This yields a decomposition of the error vector:
\[\mathbf{e}(\tau) = \mathbf{e}_1(\tau) + \mathbf{e}_2(\tau),\]
where $\mathbf{e}_1(\tau) \in \left[\text{Row}(A)\right]^{dN}$ and $\mathbf{e}_2(\tau) \in \left[\text{Row}_\perp(A)\right]^{dN}$.

Under this decomposition, the system~\eqref{eq:未分解的系统} separates into two independent subsystems:
\[\begin{aligned}
\mathbf{e}_1(\tau_{k+1}) &= \mathbf{P}(\tau_k)\left(W(\tau_k) \otimes I_n\right) \mathbf{P}(\tau_{k-1}) \mathbf{e}_1(\tau_k), \\
\mathbf{e}_2(\tau_{k+1}) &= \mathbf{P}(\tau_k)\left(W(\tau_k) \otimes I_n\right) \mathbf{P}(\tau_{k-1}) \mathbf{e}_2(\tau_k).
\end{aligned}\]

Let $\mathcal{I} \subseteq \mathbf{m}$ denote any subset of the row index set of $A$. Define $A_{\mathcal{I}}$ as the submatrix formed by the corresponding rows of $A$. Since $\text{Row}_\perp(A)$ is the null space of $A$, for any $\mathbf{y} \in \text{Row}_\perp(A)$ and any $A_{\mathcal{I}}$, the projection identity holds:
\[\left(I_n - A_{\mathcal{I}}^\dagger A_{\mathcal{I}}\right) \mathbf{y} = \mathbf{y}.\]
Hence, the second subsystem simplifies to:
\begin{equation}
\mathbf{e}_2(\tau_{k+1}) = \left(W(\tau_k) \otimes I_n\right) \mathbf{e}_2(\tau_k),
\label{eq:普通共识系统}
\end{equation}
which describes a classical consensus system over a time-varying delay graph.

Next, let $P_A(\tau_k)$ denote the restriction of $P(\tau_k)$ onto $\text{Row}(A)$, and define the block-diagonal projection:
\[\mathbf{P}_A(\tau_k) = \text{diag}\left(P_A(\tau_k), \dots, P_A(\tau_{k-d})\right),\]
which acts on $\left[\text{Row}(A)\right]^{dN}$. Then the first subsystem becomes:
\begin{equation}
\mathbf{e}_1(\tau_{k+1}) = \mathbf{P}_A(\tau_k)\left(W(\tau_k) \otimes I_n\right)\mathbf{P}_A(\tau_{k-1}) \mathbf{e}_1(\tau_k),
\label{eq:投影共识系统}
\end{equation}
representing a \emph{projected consensus system} on the active subspace.

To analyze convergence, we focus on the spectral properties of the composite transition operator:
\begin{equation}
 M_A = \mathbf{P}_A(\tau_k)\left(W(\tau_k) \otimes I_n\right)\mathbf{P}_A(\tau_{k-1}) \left(W(\tau_{k-1}) \otimes I_n\right) \cdots \mathbf{P}_A(\tau_1)\left(W(\tau_1) \otimes I_n\right)\mathbf{P}_A(\tau_0),
\label{eq:状态转移矩阵}
\end{equation}
which is a block matrix composed of interleaved projection operators restricted to $\text{Row}(A)$. The spectral radius of $M_A$ will serve as the basis for establishing exponential convergence on the subspace $\text{Row}(A)$.

\subsection{Strict Contraction Conditions}

To prove that the spectral radius of $M_A$ is strictly less than 1, a common approach is to identify a product of projection matrices—hereafter referred to as a \textit{projection matrix polynomial}—whose spectral norm is strictly contractive under certain conditions. We then construct such matrix products within $M_A$ by leveraging the connectivity structure of the communication graph.

In our setting, due to the generality of $A$, we restrict analysis to the subspace $\left[\text{Row}(A)\right]^{dN}$, where strict contraction can still be achieved. This requires defining an appropriate norm structure over this subspace.

A hybrid matrix norm $\|\cdot\|$ has been utilized in prior work~\cite{Liu2017Exponential, Liu2018Asynchronous, Yi2020Distributed} to establish convergence of asynchronous systems. For a block matrix $A = [A_{ij}]_{dm \times dm}$ with $A_{ij} \in \mathbb{R}^{n \times n}$,the hybrid norm of $A$ is defined as:
\[\|A\| = \left\| [\|A_{ij}\|_2] \right\|_\infty.\]
We now define a corresponding norm over $\left[\text{Row}(A)\right]^{dN}$, denoted $\|\cdot\|_A$, as follows.

For a linear operator $P$ acting on $\text{Row}(A)$, define the induced norm:
\[\|P\|_2^A = \max \left\{ \|Px\|_2 : x \in \text{Row}(A),\ \|x\|_2 = 1 \right\}.\]
Then for any block matrix $P = [P_{ij}]_{dN \times dN}$ with $P_{ij} \in \mathbb{R}^{n \times n}$, define the hybrid operator norm on $\left[\text{Row}(A)\right]^{dN}$ as:
\[\|P\|_A = \left\| [\|P_{ij}\|_2^A] \right\|_\infty.\]
It can be shown that $\|\cdot\|_A$ defines a valid operator norm on $\left[\text{Row}(A)\right]^{dN}$, and for any $x = [x_1^T, \dots, x_{dN}^T]^T$ with $x_i \in \text{Row}(A)$, we have:
$\|x\|_A = \max\limits_i \|x_i\|_2.$

Let $P_{\mathcal{I}} = I_n - A_{\mathcal{I}}^\dagger A_{\mathcal{I}}$ denote the orthogonal projection onto the null space of $A_{\mathcal{I}}$, where $\mathcal{I} \subseteq \mathbf{m}$ is a row index subset. Let $P_{\mathcal{I}}^A$ denote the restriction of $P_{\mathcal{I}}$ to $\text{Row}(A)$.

A \textbf{projection matrix polynomial on $\text{Row}(A)$} is defined as:
\[\mu = \sum_{q=1}^c \lambda_q P_{\mathcal{I}^{q}_{n_q}}^A \cdots P_{\mathcal{I}^q_1}^A,\]
where $c$ is the number of terms, each weighted by a positive coefficient $\lambda_q > 0$, and each term is a product of $n_q$ projection operators. The sets $\mathcal{I}_i^q,i=1,\cdots, n_q$ specify the index sets of row blocks involved in each projection.

Define the weight of the polynomial as:
$\lceil \mu \rceil = \sum\limits_{q=1}^c \lambda_q.$
It can be verified that this operator satisfies:
\[\lceil \mu_1 \mu_2 \rceil = \lceil \mu_1 \rceil \lceil \mu_2 \rceil, \quad \lceil \mu_1 + \mu_2 \rceil = \lceil \mu_1 \rceil + \lceil \mu_2 \rceil, \quad \|\mu\|_2^A \le \lceil \mu \rceil.\]

Let $\mathcal{L}(A)$ denote the family of all index sets corresponding to maximal linearly independent row subsets of $A$. That is, for any $S \in \mathcal{L}(A)$, the rows indexed by $S$ form a basis for the row space of $A$. To ensure that $\|\mu\|_2^A < \lceil \mu \rceil$, we next extend \textbf{Lemma 2} in~\cite{Liu2018Asynchronous}, generalizing it to arbitrary-rank $A$.

\begin{lemma} Let \( \mathbf{I} \) be a set composed of some subsets of index set \( \mathbf{m} \). If there exists \( S \in \mathcal{L}(A) \) such that \( S \subseteq \bigcup\limits_{\mathcal{I} \in \mathbf{I}} \mathcal{I} \), then:
\[
\left\|\prod\limits_{\mathcal{I} \in \mathbf{I}} P_{\mathcal{I}}^A \right\|_2^A < 1.
\]
\end{lemma}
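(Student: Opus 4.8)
The plan is to recognize $\prod_{\mathcal{I} \in \mathbf{I}} P_{\mathcal{I}}^A$ as a product of orthogonal projections acting on the finite-dimensional inner-product space $\text{Row}(A)$, and to show that the common fixed-point set of these projections is trivial, which forces a strict contraction by a compactness argument.

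First I would verify that $\text{Row}(A)$ is invariant under each $P_{\mathcal{I}} = I_n - A_{\mathcal{I}}^\dagger A_{\mathcal{I}}$, so that the restriction $P_{\mathcal{I}}^A$ is a genuine self-map. Indeed, the range of $A_{\mathcal{I}}^\dagger A_{\mathcal{I}}$ equals $\text{Row}(A_{\mathcal{I}}) \subseteq \text{Row}(A)$; hence for $x \in \text{Row}(A)$ we have $P_{\mathcal{I}} x = x - A_{\mathcal{I}}^\dagger A_{\mathcal{I}} x \in \text{Row}(A)$. Since $P_{\mathcal{I}}$ is symmetric and idempotent on $\mathbb{R}^n$, its restriction $P_{\mathcal{I}}^A$ is an orthogonal projection within $\text{Row}(A)$, under the inherited inner product, onto the subspace $N_{\mathcal{I}} := \text{null}(A_{\mathcal{I}}) \cap \text{Row}(A)$.

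Next I would establish the standard equality-case characterization for products of orthogonal projections. Writing the product as $T = P_{\mathcal{I}_1}^A \cdots P_{\mathcal{I}_r}^A$ for some ordering of $\mathbf{I}$, the Pythagorean identity $\|P_{\mathcal{I}}^A y\|_2^2 + \|y - P_{\mathcal{I}}^A y\|_2^2 = \|y\|_2^2$, valid for $y \in \text{Row}(A)$, gives $\|P_{\mathcal{I}}^A y\|_2 \le \|y\|_2$ with equality only when $y = P_{\mathcal{I}}^A y$, i.e.\ when $y \in N_{\mathcal{I}}$. Telescoping through the factors of $T$ then shows that $\|Tx\|_2 = \|x\|_2$ holds if and only if $x$ is fixed by every factor, that is, if and only if $x \in \bigcap_{\mathcal{I} \in \mathbf{I}} N_{\mathcal{I}}$. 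I would then compute this common set using the hypothesis: a vector $y$ lies in $\text{null}(A_{\mathcal{I}})$ for all $\mathcal{I} \in \mathbf{I}$ exactly when it annihilates every row indexed by $\bigcup_{\mathcal{I}} \mathcal{I}$, so $\bigcap_{\mathcal{I}} \text{null}(A_{\mathcal{I}}) = \text{null}(A_{\bigcup \mathcal{I}})$. Because $S \subseteq \bigcup_{\mathcal{I}} \mathcal{I}$ with $S \in \mathcal{L}(A)$, the rows indexed by $\bigcup_{\mathcal{I}} \mathcal{I}$ already span $\text{Row}(A)$, whence $\text{null}(A_{\bigcup \mathcal{I}}) = \text{Row}_\perp(A)$. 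Intersecting with $\text{Row}(A)$ gives $\bigcap_{\mathcal{I}} N_{\mathcal{I}} = \text{Row}_\perp(A) \cap \text{Row}(A) = \{0\}$. Consequently $\|Tx\|_2 < \|x\|_2$ for every nonzero $x \in \text{Row}(A)$, and since the unit sphere of the finite-dimensional space $\text{Row}(A)$ is compact and $x \mapsto \|Tx\|_2$ is continuous, the maximum $\|T\|_2^A$ is attained and is therefore strictly less than $1$.

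The main obstacle I anticipate is the bookkeeping required to transport the classical Hilbert-space argument for products of projections onto the subspace $\text{Row}(A)$: one must confirm both that the projections leave $\text{Row}(A)$ invariant, so that each $P_{\mathcal{I}}^A$ is a bona fide orthogonal projection there, and that the relevant common null space, once intersected with $\text{Row}(A)$, collapses to $\{0\}$. This restriction step is precisely where the generalization beyond the full-rank setting of \textbf{Lemma 2} in~\cite{Liu2018Asynchronous} resides; the subsequent strict-contraction conclusion via compactness is then routine.
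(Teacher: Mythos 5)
Your proposal is correct and follows essentially the same route as the paper's proof: both exploit non-expansiveness of the projections, show that equality in norm forces the vector to be fixed by (equivalently, orthogonal to the rows selected by) every factor, and then use the spanning hypothesis $S \subseteq \bigcup_{\mathcal{I}\in\mathbf{I}}\mathcal{I}$ to conclude the vector lies in $\text{Row}(A)\cap\text{Row}_\perp(A)=\{0\}$, a contradiction. Your explicit verification that $\text{Row}(A)$ is invariant under each $P_{\mathcal{I}}$ and the closing compactness step are details the paper leaves implicit, but they do not change the argument.
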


\begin{proof}
    For any \( S \in \mathcal{L}(A) \), it holds that \( \text{Row}(A) = \text{span} \left\{ \alpha_h \ \middle| \ h \in S \right\} \).  
    By the non-expansive property of projection operators, for any \( x \in \text{Row}(A) \) with \( \|x\|_2=1 \), we have:
    \[
    \left\|\left( \prod\limits_{\mathcal{I} \in \mathbf{I}} P_{\mathcal{I}}^A \right) x \right\|_2 \leq 1.
    \]
    
    Define \( \prod\limits_{\mathcal{I} \in \mathbf{I}} P_{\mathcal{I}}^A=P_{\mathcal{I}_l}^A\cdots P_{\mathcal{I}_1}^A \).  
    If the equal sign in the above inequality holds, then due to the non-increasing nature of projection operators, for any \( 2 \leq s \leq l \), we must have:
    \[
    \left\|\left(\overset{s}{\prod\limits_{t=1}}P_{\mathcal{I}_t}^A\right)x\right\|_2=\left\|\left(\overset{s-1}{\prod\limits_{t=1}}P_{\mathcal{I}_t}^A\right)x\right\|_2=\|x\|_2=1.
    \]
    We now use induction to show that in this case,with \( x \in \text{Row}(A) \) and \( \|x\|_2=1 \), it must have $x\perp\alpha_h, h\in\overset{l}{\bigcup\limits_{t=1}}\mathcal{I}_t $.

    First, for $s=1$, we can immediately obtain that  $x\perp\alpha_h, \\ h\in\mathcal{I}_1$.  
    Now assume that for some \( s-1 \), we have $x\perp\alpha_h,h\in\overset{s-1}{\bigcup\limits_{t=1}}\mathcal{I}_t.$  
    Then it follows from 
    $\left\|\left(\overset{s}{\prod\limits_{t=1}}P_{\mathcal{I}_t}^A\right)x\right\|_2=\left\|\left(\overset{s-1}{\prod\limits_{t=1}}P_{\mathcal{I}_t}^A\right)x\right\|_2$
     that $\left(\left(\overset{s-1}{\prod\limits_{t=1}}P_{\mathcal{I}_t}^A\right)x\right)\perp\alpha_h, h\in\mathcal{I}_s.$
    Note that  \( \left(\overset{s-1}{\prod\limits_{t=1}}P_{\mathcal{I}_t}^A\right)x = x \), it then follows 
     $x\perp\alpha_h,h\in\mathcal{I}_s.$
    Hence, $x\perp\alpha_h,h\in\overset{s}{\bigcup\limits_{t=1}}\mathcal{I}_t.$
    By induction, the claim follows.  
    
    Since \( S \subseteq \bigcup\limits_{\mathcal{I} \in \mathbf{I}}\mathcal{I} \), it follows that  
    \[
    x \in \text{Row}(A)\cap \text{Row}_\perp(A),
    \]
    which implies \( x=0 \), this is  a contradiction.  
    Thus, for any \( x \in \text{Row}(A) \) with \( \|x\|_2=1 \), we obtain:
    \[
    \left\|\left( \prod\limits_{\mathcal{I} \in \mathbf{I}} P_{\mathcal{I}}^A \right) x \right\|_2 < 1. \text{ Hence, }
    \left\|\prod\limits_{\mathcal{I} \in \mathbf{I}} P_{\mathcal{I}}^A \right\|_2^{A} < 1.
    \]
\end{proof}

\subsection{Sufficient and Necessary Conditions for Strict Contraction}

\textbf{Lemma 1} establishes that a projection matrix polynomial $\mu$ satisfies the strict inequality $|\mu|_2^A < \lceil \mu \rceil$ if and only if there exists at least one term in $\mu$ whose constituent projection matrices collectively cover a maximal linearly independent subset of the rows of $A$. We refer to such polynomials as \textbf{complete}.

Consider a block matrix $U = [\mu_{ij}]_{dm \times dm}$ composed of projection matrix polynomials. Define its coefficient-weighted counterpart as $\lceil U \rceil = [\lceil \mu_{ij} \rceil]_{dm \times dm}$. Due to the properties $\lceil \mu_1 \mu_2 \rceil = \lceil \mu_1 \rceil \lceil \mu_2 \rceil$ and $\lceil \mu_1 + \mu_2 \rceil = \lceil \mu_1 \rceil + \lceil \mu_2 \rceil$, it follows that:
\[\lceil U_1 U_2 \rceil = \lceil U_1 \rceil \lceil U_2 \rceil.\]
Note that the operator $M_A$ defined in~\eqref{eq:状态转移矩阵} is precisely a matrix of this form.

We now provide a criterion for strict contraction under the hybrid norm:

\begin{proposition}
Let $U = [\mu_{ij}]_{dm \times dm}$ be a matrix of projection matrix polynomials on $\text{Row}(A)$. Then:
\[
\begin{aligned}
    \|U\|_A < & \|\lceil U \rceil\|_\infty \iff \text{Each row of } U \\
& \text{ contains at least one complete polynomial}.
\end{aligned}
\]
\end{proposition}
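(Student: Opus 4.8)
The plan is to reduce this operator-norm inequality to a purely row-by-row comparison of two nonnegative scalar matrices. Since $\|\cdot\|_\infty$ of a nonnegative matrix is its maximal row sum, and the hybrid norm is by definition $\|U\|_A = \big\|[\|\mu_{ij}\|_2^A]\big\|_\infty$, I would first record the two explicit expressions $\|U\|_A = \max_i \sum_j \|\mu_{ij}\|_2^A$ and $\|\lceil U\rceil\|_\infty = \max_i \sum_j \lceil\mu_{ij}\rceil$. The whole argument then rests on the per-entry bound $\|\mu_{ij}\|_2^A \le \lceil\mu_{ij}\rceil$, which holds with equality exactly when $\mu_{ij}$ is \emph{not} complete and strictly when $\mu_{ij}$ \emph{is} complete. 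This is precisely the content of \textbf{Lemma 1} together with the definition of completeness, so I would invoke it as a black box.

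For the direction $(\Leftarrow)$, assume every row contains a complete polynomial. For each fixed row index $i$ pick such a $\mu_{i j_i}$; then $\sum_j \|\mu_{ij}\|_2^A < \sum_j \lceil\mu_{ij}\rceil$, because that one summand strictly decreases while all others decrease only weakly. Letting $i_0$ attain $\max_i \sum_j \|\mu_{ij}\|_2^A$ and combining the strict decrease at row $i_0$ with $\sum_j \lceil\mu_{i_0 j}\rceil \le \|\lceil U\rceil\|_\infty$ yields $\|U\|_A = \sum_j \|\mu_{i_0 j}\|_2^A < \|\lceil U\rceil\|_\infty$. This direction is routine and uses nothing beyond the entrywise bound.

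The direction $(\Rightarrow)$ is the delicate one, and it is where I expect the real work. I would argue by contraposition: if some row $i_1$ has no complete polynomial, then $\|\mu_{i_1 j}\|_2^A = \lceil\mu_{i_1 j}\rceil$ for all $j$, so $\sum_j \|\mu_{i_1 j}\|_2^A = \sum_j \lceil\mu_{i_1 j}\rceil$. To conclude $\|U\|_A = \|\lceil U\rceil\|_\infty$ I must guarantee that this common value equals the \emph{maximal} weight row sum, and this is exactly the step that fails for an arbitrary matrix of polynomials. I would close it using the structural fact that the coefficient-weighted matrix of the operator of interest, $U = M_A$, is \textbf{row-stochastic}: by $\lceil U_1 U_2\rceil = \lceil U_1\rceil\lceil U_2\rceil$ the matrix $\lceil M_A\rceil$ factors into weight matrices of $\mathbf{P}_A(\tau_k)$ and $W(\tau_k)\otimes I_n$, each projection block carries weight $1$ and each $W(\tau_k)$ is row-stochastic, so $\lceil M_A\rceil$ is a product of row-stochastic matrices and hence has every row sum equal to $\|\lceil U\rceil\|_\infty$. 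Then $\|U\|_A \ge \sum_j \|\mu_{i_1 j}\|_2^A = \sum_j \lceil\mu_{i_1 j}\rceil = \|\lceil U\rceil\|_\infty$, forcing equality and completing the contrapositive.

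The main obstacle is therefore not the norm bookkeeping but recognizing that the equivalence is clean only because all rows of $\lceil U\rceil$ share a common sum: without this balance, a row carrying no complete polynomial yet a sub-maximal weight sum could still leave the overall maximum strictly reduced, breaking $(\Rightarrow)$. I would accordingly make the equal-row-sum (row-stochasticity) property of $\lceil U\rceil$ explicit as the standing structural input inherited from the construction of $M_A$, verify it via the multiplicativity of $\lceil\cdot\rceil$, and only then deploy the row-by-row comparison above.
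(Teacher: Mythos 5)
Your proof is correct and follows the same basic route as the paper's: reduce the hybrid-norm comparison to the entrywise bound $\|\mu_{ij}\|_2^A \le \lceil\mu_{ij}\rceil$ from \textbf{Lemma 1}, with strictness exactly at the complete entries. The difference is that the paper's proof is a one-line assertion that the row-max comparison is ``equivalent'' to the entrywise condition, whereas you correctly isolate the point where this is not automatic: in the $(\Rightarrow)$ direction, a row with no complete polynomial forces its operator-norm row sum to equal its weight row sum, but this only contradicts $\|U\|_A < \|\lceil U\rceil\|_\infty$ if that weight row sum attains $\|\lceil U\rceil\|_\infty$. For an arbitrary matrix of projection polynomials with unbalanced row weights the stated equivalence can fail in that direction, so your repair --- invoking the multiplicativity $\lceil U_1U_2\rceil=\lceil U_1\rceil\lceil U_2\rceil$ to show that $\lceil M_A\rceil$ is a product of row-stochastic matrices and hence has all row sums equal to $1$ --- is a genuine and necessary strengthening, and it is consistent with the paper's own later observation that $\|\lceil M_A\rceil\|_\infty = 1$. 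It is worth noting that the only direction used downstream (to conclude $\|M_A\|_A<1$ from completeness of every row) is $(\Leftarrow)$, which your argument, like the paper's, establishes without any balance assumption; so the paper's main results are unaffected, but your version of the proposition is the one that is actually true as an ``if and only if'' for the matrices to which it is applied.
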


\begin{proof}
According to \textbf{Lemma 1}, each $\mu_{ij}$ satisfies $\|\mu_{ij}\|_2^A < \lceil \mu_{ij} \rceil$ if and only if it is complete. Therefore, the condition $\|U\|_A < \|\lceil U \rceil\|_\infty$ is equivalent to requiring that every row $i$ of $U$ contains at least one entry $\mu_{ij}$ that is complete.
\end{proof}

Now, observe that $\lceil \mathbf{P}_A(\tau) \rceil = I_{dm \times dm}$, it follows that $\lceil M_A \rceil = \lceil W(\tau_k) \rceil \cdots \lceil W(\tau_1) \rceil$, and because each $W(\tau)$ is a row-stochastic matrix, we have:
\[\|\lceil M_A \rceil\|_\infty = 1.\]
Therefore, by \textbf{Proposition 1}, if $M_A$ satisfies the completeness condition on every row, it follows that:
\[\|M_A\|_A < 1.\]

This guarantees exponential contraction of the projected consensus system~\eqref{eq:投影共识系统} in $\left[\text{Row}(A)\right]^{dN}$ whenever such a condition is repeatedly met during the iterative process.

\subsection{Graph-Theoretic Foundations}

In the previous section, we established sufficient conditions under which $\|M_A\| < 1$. Notably, the composition of each row of the block projection polynomial matrix $M_A$ is closely related to the matrix $W(\tau_k)$, and consequently, to the structure of the communication graph. This section delves into the graph-theoretic properties of the communication network to construct convergence guarantees. The theoretical framework closely follows~\cite{Liu2018Asynchronous}, which we revisit here for completeness.

\paragraph{Delayed Graph}

A \textit{delayed graph} is a directed graph defined at each time step $\tau_k$ as $\mathcal{G}_d(\tau_k) = (\mathcal{N}_d, \mathcal{V}_d(\tau_k))$, where $\mathcal{N}_d$ and $\mathcal{V}_d(\tau_k)$ denote the node and edge sets, respectively. For a system with delay $d$, the node set for agent $i$ is defined as $\mathcal{N}_i = \{i_0, i_1, \ldots, i_d\}$, where each $i_s$ represents the $s$-th delayed state of agent $i$, with $i_0 = i$. The overall node set is $\mathcal{N}_d = \bigcup\limits_{i \in \mathcal{N}} \mathcal{N}_i$.

The edge set $\mathcal{V}_d(\tau_k)$ is defined as follows. Given $\mathcal{A}(\mathcal{G}(\tau_k)) = W(\tau_k)$, it include the edges $(i_s, i_{s+1})$ and $(i, i)$ for all $i$. Additionally, if agent $i$ utilizes the $s$-th delayed state of agent $j$ at time $\tau_k$, then $(j_s, i)$ is also included in $\mathcal{V}_d(\tau_k)$.

\paragraph{Quotient Graph and Agent Subgraph}

Define the \textit{quotient graph} $\mathcal{Q}(\tau)$ of $\mathcal{G}_d(\tau)$ to be a graph over the original node set $\mathcal{N}$. For agents $i$ and $j$, an edge $(i, j)$ exists in $\mathcal{Q}(\tau)$ if there exists $i_s \in \mathcal{N}_i$ and $j_t \in \mathcal{N}_j$ such that $i_s$ is a neighbor of $j_t$ in $\mathcal{G}_d(\tau)$. The \textit{agent subgraph} $\mathcal{D}(\tau)$ is defined as the induced subgraph of $\mathcal{G}_d(\tau)$ on the node subset $\mathcal{N}$.

\paragraph{Root and Hierarchical Graphs}

A node $v$ in a directed graph $\mathcal{G}$ is called a \textit{root} if there exists a path from $v$ to every other node in the graph. A directed graph $\mathcal{G}$ is a \textit{hierarchical graph} if its nodes can be ordered as ${v_1, v_2, \ldots, v_n}$ such that:
(1) $v_1$ is a root, and
(2) for any $i < j$, if there exists a path from $v_i$ to $v_j$, then $j > i$.

Such an ordering is called a \textit{hierarchical structure} of $\mathcal{G}$. A hierarchical graph is said to have a \textit{strong root structure} if the root has direct edges to all other nodes. An extended delayed graph is said to have a \textit{strong root hierarchy} if every induced subgraph on the subset $\mathcal{N}_i$ has a strong root structure.

\paragraph{Paths in a Graph Sequence}

Given a sequence of directed graphs $\mathcal{G}_1, \mathcal{G}_2, \ldots, \mathcal{G}_k$, a node sequence $j = v_{i_0}, v_{i_1}, \ldots, v_{i_k} = i$ is said to form a \textit{path} if $(v_{i_{s-1}}, v_{i_s})$ is an edge in $\mathcal{G}_s$ for each $s$. If such a path exists, then the composed graph $\mathcal{G}_k \circ \cdots \circ \mathcal{G}_2 \circ \mathcal{G}_1$ contains the edge $(i, j)$.

\paragraph{Joint Strong Connectivity}

A sequence of directed graphs $\mathcal{G}_1, \mathcal{G}_2, \ldots, \mathcal{G}_k$ is said to be \textit{$l$-strongly connected} if every contiguous subsequence of length $l$ forms a composed graph $\mathcal{G}_{s+l} \circ \cdots \circ \mathcal{G}_{s+1}$ that is strongly connected.

\paragraph{Key Lemmas}

The following lemmas are stated without proof, as presented in~\cite{Cao2008Reaching, Liu2018Asynchronous, Yi2020Distributed}:

\begin{lemma}
    Any combination of at least $d$ delayed graphs has a strong root hierarchy.
\end{lemma}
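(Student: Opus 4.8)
The plan is to reduce the global assertion of \textbf{Lemma 2} to a purely intra-agent statement. By definition the composed graph $\mathcal{G}_d(\tau_{k}) \circ \cdots \circ \mathcal{G}_d(\tau_{k-m+1})$ has a strong root hierarchy exactly when, for every agent $i$, the subgraph induced on the delay block $\mathcal{N}_i = \{i_0, i_1, \dots, i_d\}$ carries a strong root structure; so I would fix one agent $i$ and one window of length $m \ge d$. The first step is to isolate the edges of $\mathcal{G}_d(\tau)$ that lie inside $\mathcal{N}_i$ and observe that they are \emph{time-invariant}: from the weight entries $W_{i_l,i_{l-1}} = 1$ each delayed graph contains the aging chain $i_0 \to i_1 \to \cdots \to i_d$, together with the self-loop at $i_0$, and these are the only edges among the nodes of $\mathcal{N}_i$. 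Because the aging edges strictly raise the delay index and the only edge returning to $i_0$ is its own self-loop, $i_0$ is the unique candidate root of the block.

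Next I would construct, for each $s \in \{0, 1, \dots, d\}$, an explicit walk of length $m$ from $i_0$ to $i_s$ that never leaves $\mathcal{N}_i$: repeat the self-loop $i_0 \to i_0$ for the first $m - s$ steps, then advance along the aging chain $i_0 \to i_1 \to \cdots \to i_s$ for the remaining $s$ steps. This is admissible precisely because $m \ge d \ge s$, and since every edge it uses is present in \emph{every} factor graph, it is a valid path in the sense of the composition rule; it therefore yields a directed edge from $i_0$ to $i_s$ in the composed graph. As the walk stays inside $\mathcal{N}_i$, this edge is retained in the induced subgraph on $\mathcal{N}_i$. Hence $i_0$ has a direct edge to every node of its own block, which is exactly the defining property of a strong root structure; combined with the fact that $i_0$ is the unique node able to reach all the others, $i_0$ is a strong root. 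Letting $i$ range over all agents then establishes the strong root hierarchy.

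The step I expect to be the main obstacle is the path-length bookkeeping, which is also where the threshold $d$ is forced. The aging edges are the sole mechanism for increasing the delay index, and they advance it by exactly one unit per composition step, while the self-loop---available only at $i_0$---is the only way to idle without moving. Reaching the deepest node $i_d$ therefore demands at least $d$ advancing steps, so with fewer than $d$ factors $i_0$ cannot reach $i_d$ at all and the block has no root; this shows that the bound ``at least $d$'' is both sufficient and tight, and the argument must make this counting explicit rather than merely assert reachability. A secondary point to check carefully is that the witnessing walks remain entirely within $\mathcal{N}_i$, so that the required edges genuinely survive in the \emph{induced} subgraph and are not an artifact of detours through other agents' delay nodes.
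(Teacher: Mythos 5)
The paper does not actually prove this lemma---it is one of the five ``Key Lemmas'' stated without proof and attributed to the cited references (Cao et al., Liu et al., Yi et al.)---so there is no in-paper argument to compare against. Your proof supplies the standard argument and it is sound: the reduction to a single delay block $\mathcal{N}_i$ is exactly the paper's definition of a strong root hierarchy; the intra-block edges (the aging chain $i_0 \to i_1 \to \cdots \to i_d$ from the unit entries $W_{i_l,i_{l-1}}=1$, plus the self-loop at $i_0$, since $\delta_{i,i}=0$ rules out delayed self-edges) are indeed present in \emph{every} factor graph regardless of the communication pattern, so the walk ``idle at $i_0$ for $m-s$ steps, then advance $s$ steps'' is admissible in any window of length $m\ge d$ and produces the required direct edge from $i_0$ to each $i_s$ in the composition. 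Your length-counting remark correctly explains why the threshold is exactly $d$. Two minor points: the claim that these are the \emph{only} intra-block edges is not needed for sufficiency (extra edges entering the block cannot destroy the property that the root has direct edges to all other nodes), and the induced subgraph of the composed graph may contain additional edges witnessed by walks that leave $\mathcal{N}_i$ and return---but by the paper's definition only the root's out-edges matter, so this does not affect the conclusion.
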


\begin{lemma}
    Let $\mathcal{G}_d^p$ and $\mathcal{G}_d^q$ be delayed graphs. If $\mathcal{G}_d^p$ has a strongly connected quotient graph and $\mathcal{G}_d^q$ has a strong root hierarchy, then $\mathcal{G}_d^p \circ \mathcal{G}_d^q$ has a strongly connected agent subgraph.
\end{lemma}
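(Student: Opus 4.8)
The plan is to establish strong connectivity of the agent subgraph of $\mathcal{G}_d^p \circ \mathcal{G}_d^q$ by producing, for every ordered pair of agents $(i,j)$, a directed path between the current-state nodes $i_0$ and $j_0$ that never leaves the induced subgraph on $\mathcal{N} = \{1,\dots,N\}$ (recall $i_0 = i$). I would first pin down the edge conventions from the delayed-graph definition: every inter-agent edge terminates at a current node and has the form $(j_s, i_0)$; the shift edges $(i_s, i_{s+1})$ are the only edges feeding the delayed copies $i_s$ with $s \geq 1$; and each $i_0$ carries a self-loop. Under these conventions the current nodes are exactly the sources of the within-agent shift chains, so the real difficulty is not inter-agent reachability but the re-alignment of delay indices, i.e.\ guaranteeing that a path can both depart from and arrive at current copies rather than getting stranded on some $i_s$ with $s\geq 1$.

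Next I would exploit the hypothesis on $\mathcal{G}_d^p$. Because $\mathcal{Q}(\mathcal{G}_d^p)$ is strongly connected, for any agents $i,j$ there is a quotient path $j = b_0, b_1, \dots, b_m = i$, and by the definition of the quotient each edge $(b_r, b_{r+1})$ unpacks to an actual edge of $\mathcal{G}_d^p$ between some delay copies of $b_r$ and $b_{r+1}$. Since the inter-agent edges of a (possibly composed) delayed graph always land on the receiver's current copy, this edge can be taken in the form $\big((b_r)_s, (b_{r+1})_0\big)$ for some delay index $s$. Thus $\mathcal{G}_d^p$ supplies, for each consecutive pair along the quotient path, an edge that feeds a current copy from a delayed copy of its predecessor.

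I would then use the strong root hierarchy of $\mathcal{G}_d^q$ to furnish the missing within-agent connectors. By hypothesis (and consistently with the preceding lemma, as $\mathcal{G}_d^q$ is a combination of at least $d$ delayed graphs) each induced subgraph on $\mathcal{N}_a$ has a strong root structure; by the shift structure the root is necessarily the current node $a_0$, which therefore has a direct edge in $\mathcal{G}_d^q$ to every delayed copy $a_s$. Prepending such a fan-out edge routes $(b_r)_0$ into whatever copy $(b_r)_s$ the subsequent $\mathcal{G}_d^p$ edge needs as its tail; composing the $\mathcal{G}_d^q$ fan-out $(b_r)_0 \to (b_r)_s$ with the $\mathcal{G}_d^p$ edge $(b_r)_s \to (b_{r+1})_0$ yields, via the path--composition correspondence stated in the excerpt, an edge $\big((b_{r+1})_0, (b_r)_0\big)$ of $\mathcal{G}_d^p \circ \mathcal{G}_d^q$ between current copies. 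Chaining these composition edges along $b_0,\dots,b_m$ gives a directed path from $j_0$ to $i_0$ inside the agent subgraph, and repeating the construction for the reverse pair $(j,i)$ (again using strong connectivity of the quotient in both directions) yields mutual reachability, hence strong connectivity.

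The step I expect to be the main obstacle is the delay-index bookkeeping in the third paragraph: rigorously verifying that the strong root hierarchy of $\mathcal{G}_d^q$ always supplies precisely the fan-out needed to place the tail of each $\mathcal{G}_d^p$ inter-agent edge, and that every fan-out/inter-agent pair genuinely collapses both endpoints onto current copies so the walk never stalls on a delayed node (which has no route back to a current copy). I would make this precise by an induction along the quotient path that tracks the entry delay index at each visited agent and certifies that one $\mathcal{G}_d^q$-then-$\mathcal{G}_d^p$ composition step restores it to zero; the remaining ingredients—closure of the edge structure under composition and the reversal argument—are routine.
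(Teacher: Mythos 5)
The paper does not actually prove this lemma: it is one of the ``Key Lemmas'' stated without proof and attributed to the cited references (Cao--Morse--Anderson and Liu et al.), so there is no in-paper argument to compare against. Your route is the standard one from those sources, and its skeleton is right: use the strong root structure of $\mathcal{G}_d^q$ to fan out from each current node $(b_r)_0$ to whatever delayed copy $(b_r)_s$ is needed, follow it by an inter-agent edge of $\mathcal{G}_d^p$ into $(b_{r+1})_0$, obtain one edge of $\mathcal{G}_d^p\circ\mathcal{G}_d^q$ between current copies per quotient edge, and chain these along a quotient path to get strong connectivity of the agent subgraph.

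The one step that is not correctly justified as written is the premise that ``the inter-agent edges of a (possibly composed) delayed graph always land on the receiver's current copy.'' That is true for a single delayed graph by definition, but false for compositions, and the hypothesis that $\mathcal{G}_d^q$ has a strong root hierarchy forces you (for $d\geq 2$) to read both graphs as compositions: composing $(j_s,i_0)$ with the shift $(i_0,i_1)$ already produces an inter-agent edge terminating at the delayed copy $i_1$. So a quotient edge $(b_r,b_{r+1})$ of $\mathcal{Q}(\mathcal{G}_d^p)$ is a priori only witnessed by some edge $\bigl((b_r)_s,(b_{r+1})_t\bigr)$ with possibly $t\geq 1$, and your path would then strand on a delayed copy with no return to the agent subgraph. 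The conclusion you need is still true, but it requires an extra observation: decompose the composed edge into hops through the constituent single delayed graphs, note that the last hop entering $\mathcal{N}_{b_{r+1}}$ necessarily lands on $(b_{r+1})_0$, and replace the trailing within-agent shift edges by the self-loops at $(b_{r+1})_0$ that every constituent graph carries; this yields the edge $\bigl((b_r)_s,(b_{r+1})_0\bigr)$ in the full composition. With that repair (which is exactly the ``delay-index bookkeeping'' you flagged as the expected obstacle), the argument goes through. A smaller point worth making explicit: the definition of strong root hierarchy does not by itself name $a_0$ as the root, so you should justify that $a_0$ has the required fan-out by appealing to the shift-plus-self-loop chain available in any composition of at least $d$ delayed graphs, as in the preceding lemma.
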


\begin{lemma}
    The combination of more than $\mathrm{N}-1$ strongly connected directed graphs with self-loops at every node forms a fully connected graph.
\end{lemma}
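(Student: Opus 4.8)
The plan is to reduce the statement to a reachability count on a single source node and then show that the reachable set saturates within $\mathrm{N}-1$ compositions. By the path characterization in \emph{Paths in a Graph Sequence}, the composed graph $\mathcal{G}_k \circ \cdots \circ \mathcal{G}_1$ contains the edge $(i,j)$ precisely when there is a node sequence $j = v_0, v_1, \ldots, v_k = i$ with $(v_{s-1}, v_s)$ an edge of $\mathcal{G}_s$ for each $s$. Fixing an arbitrary source $j$, I would define the forward reachable sets $R_0 = \{j\}$ and, for $s \ge 1$, $R_s = \{\, v \in \mathcal{N} : \exists\, u \in R_{s-1} \text{ with } (u,v) \text{ an edge of } \mathcal{G}_s \,\}$. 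With this notation the composite on $k$ factors contains $(i,j)$ if and only if $i \in R_k$, so it suffices to prove $R_k = \mathcal{N}$ for all $k > \mathrm{N}-1$ and every choice of $j$.

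The argument then rests on two facts, one for each hypothesis. First, because every $\mathcal{G}_s$ carries a self-loop at each node, the loop $(v,v)$ keeps any $v \in R_{s-1}$ inside $R_s$, giving monotonicity $R_{s-1} \subseteq R_s$. Second, because every $\mathcal{G}_s$ is strongly connected, any proper nonempty subset must have an edge leaving it: if $R_{s-1} \ne \mathcal{N}$ then there is an edge $(u,w) \in \mathcal{G}_s$ with $u \in R_{s-1}$ and $w \notin R_{s-1}$, for otherwise the nodes in $\mathcal{N} \setminus R_{s-1}$ would be unreachable from those in $R_{s-1}$, contradicting strong connectivity. Hence $|R_s| \ge |R_{s-1}| + 1$ whenever $R_{s-1}$ is not yet all of $\mathcal{N}$. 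Starting from $|R_0| = 1$ and adding at least one new node per step until saturation yields $|R_s| \ge \min(s+1, \mathrm{N})$, so $R_{\mathrm{N}-1} = \mathcal{N}$; by monotonicity $R_k = \mathcal{N}$ for every $k \ge \mathrm{N}-1$. Since $j$ was arbitrary, every ordered pair $(i,j)$ is an edge of the composite whenever more than $\mathrm{N}-1$ graphs are combined, which is exactly full connectivity.

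I do not expect a deep obstacle here; the delicate points are bookkeeping rather than conceptual. The main care is to respect the edge-direction convention fixed in the excerpt so that the reachable sets propagate \emph{forward} along $\mathcal{G}_1, \ldots, \mathcal{G}_k$ in the correct order, and to phrase the strict-growth step as a genuine cut argument — strong connectivity is equivalent to every proper nonempty node set having an outgoing edge, and it is precisely this "no closed proper subset" property (not mere connectivity) that forces the strict increase. It is also worth noting that the count actually gives saturation at $\mathrm{N}-1$ compositions, so the hypothesis "more than $\mathrm{N}-1$" is comfortably sufficient; the self-loops additionally guarantee that once $R_k = \mathcal{N}$ the composite stays complete under any further composition, so no sharper tracking is needed.
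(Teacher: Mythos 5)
Your proof is correct. Note that the paper itself does not prove this lemma --- it is one of the results explicitly ``stated without proof'' with a citation to Cao--Morse--Anderson and the related asynchronous-consensus literature --- so there is no in-paper argument to compare against. Your reachability argument is the standard one from that literature and it is sound: the self-loops give the monotonicity $R_{s-1}\subseteq R_s$, and strong connectivity of each factor gives the cut property that a proper nonempty reachable set must acquire at least one new node per composition, so $|R_s|\ge\min(s+1,\mathrm{N})$ and the composite of $\mathrm{N}-1$ (a fortiori more) factors contains every ordered pair. You also correctly track the paper's convention that a path $j=v_0,\dots,v_k=i$ through $\mathcal{G}_1,\dots,\mathcal{G}_k$ yields the edge $(i,j)$ of $\mathcal{G}_k\circ\cdots\circ\mathcal{G}_1$, and your observation that saturation already occurs at exactly $\mathrm{N}-1$ factors (so the stated ``more than $\mathrm{N}-1$'' hypothesis is not tight) is accurate.
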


\begin{lemma}
    If $n \geq (\mathrm{N}-1)(d+1)$, then any sequence of $n$ delayed graphs with strongly connected quotient graphs yields a fully connected agent subgraph.
\end{lemma}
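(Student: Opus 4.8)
The plan is to establish this final lemma through a blocking-then-saturation argument that chains \textbf{Lemma 2}, \textbf{Lemma 3}, and \textbf{Lemma 4}. Writing the given sequence as $\mathcal{G}_d(\tau_1),\dots,\mathcal{G}_d(\tau_n)$ with $n \ge (\mathrm{N}-1)(d+1)$, I would first carve the leading $(\mathrm{N}-1)(d+1)$ graphs into $\mathrm{N}-1$ consecutive, disjoint blocks of length $d+1$ each; any surplus graphs beyond this prefix are discarded, since appending further strongly connected factors can only enlarge reachability. The aim is to show that every block composes into a single delayed graph with a \emph{strongly connected agent subgraph}, and then to feed the resulting $\mathrm{N}-1$ such agent subgraphs into \textbf{Lemma 4} to force full connectivity of the overall composite.

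Fix one block, comprising the graphs $\mathcal{G}_d(\tau_{r+1}),\dots,\mathcal{G}_d(\tau_{r+d+1})$, and split it into its first $d$ factors and its last factor. By \textbf{Lemma 2}, the composition of the first $d$ delayed graphs possesses a strong root hierarchy; denote this inner composite by $\mathcal{G}_d^{q}$. The last factor $\mathcal{G}_d^{p}=\mathcal{G}_d(\tau_{r+d+1})$ has a strongly connected quotient graph by hypothesis, since every graph in the sequence does. Under the convention $\mathcal{A}(\mathcal{G}_1\circ\mathcal{G}_2)=\mathcal{A}(\mathcal{G}_1)\mathcal{A}(\mathcal{G}_2)$, in which earlier graphs act first, the block composite equals $\mathcal{G}_d^{p}\circ\mathcal{G}_d^{q}$, so \textbf{Lemma 3} applies and yields that the block composite has a strongly connected agent subgraph. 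Because each original node carries a self-loop in every delayed graph, these self-loops survive composition, so each block's agent subgraph is strongly connected and still self-looped at every node.

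With $\mathrm{N}-1$ strongly connected, self-looped agent subgraphs in hand---one per block---I would invoke \textbf{Lemma 4}: composing on the order of $\mathrm{N}-1$ strongly connected graphs with self-loops saturates to the complete graph, the count being governed by the fact that a strongly connected graph on $\mathrm{N}$ nodes has diameter at most $\mathrm{N}-1$, so the reachable set from any fixed node grows by at least one vertex per factor until it exhausts all $\mathrm{N}$ nodes. Since the blocks are consecutive and disjoint, the composition of the $\mathrm{N}-1$ block composites is exactly the composite of the whole graph sequence, and its agent subgraph is therefore fully connected, which is the desired assertion.

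The step I expect to be the main obstacle is the reduction that legitimises treating the problem purely on the $\mathrm{N}$-node agent subgraphs: one must verify that the agent subgraph of a composition of delayed graphs contains the composition of the individual agent subgraphs. \textbf{Lemma 3} only certifies that \emph{each} block admits agent-to-agent paths staying on the original nodes $\{i_0\}$; the delicate point is to confirm that an agent-to-agent path realised in one block can be concatenated with one from the next block into a genuine path of the composed agent subgraph, without being forced to detour through the auxiliary delay nodes $i_1,\dots,i_d$. The self-loops at the original nodes are precisely what permit the length-matching needed for this concatenation. A secondary, purely bookkeeping concern is the boundary case of the counting: one must check that the bound $n \ge (\mathrm{N}-1)(d+1)$ supplies exactly the number of blocks that the saturation in \textbf{Lemma 4} requires, reconciling the ``$d+1$ per block'' and ``$\mathrm{N}-1$ blocks'' arithmetic at the extreme $n=(\mathrm{N}-1)(d+1)$.
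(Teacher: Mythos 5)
The paper does not actually prove this statement: it is Lemma~5 in the appendix, listed among the ``Key Lemmas'' that are ``stated without proof'' with a citation to the prior literature (Cao et al., Liu et al., Yi et al.), so there is no in-paper argument to compare yours against. That said, your reconstruction is the standard one from that literature and is essentially sound: partition the prefix of length $(\mathrm{N}-1)(d+1)$ into $\mathrm{N}-1$ blocks of $d+1$, use Lemma~2 on the first $d$ factors of each block and Lemma~3 with the block's last factor to get a strongly connected, self-looped agent subgraph per block, then saturate. The ``main obstacle'' you flag is in fact immediate from the definitions: with $\mathcal{A}(\mathcal{G}_1\circ\mathcal{G}_2)=\mathcal{A}(\mathcal{G}_1)\mathcal{A}(\mathcal{G}_2)$, an edge of $\mathcal{D}(\mathcal{G}_1)\circ\mathcal{D}(\mathcal{G}_2)$ is witnessed by an intermediate node in $\mathcal{N}$, which a fortiori witnesses the corresponding edge of $\mathcal{D}(\mathcal{G}_1\circ\mathcal{G}_2)$, so the composition of agent subgraphs is always contained in the agent subgraph of the composition; no delicacy about delay-node detours arises. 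The one point you should make explicit rather than leave as a ``bookkeeping concern'' is the off-by-one against Lemma~4 as the paper states it: that lemma asks for \emph{more than} $\mathrm{N}-1$ strongly connected self-looped graphs, while your partition yields exactly $\mathrm{N}-1$ blocks. Your parenthetical reachability argument (the reachable set from any fixed node grows by at least one vertex per strongly connected, self-looped factor, hence saturates after $\mathrm{N}-1$ factors) closes this gap and shows $\mathrm{N}-1$ suffices, but you are then proving a sharpened form of Lemma~4 rather than invoking it as stated; say so explicitly, since otherwise the chain of citations does not literally support the bound $n\geq(\mathrm{N}-1)(d+1)$.
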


\begin{lemma}
    Any combination of more than $(d+1)(\mathrm{N}-1) + d$ delayed graphs with strongly connected quotient graphs ensures a strong root structure.
\end{lemma}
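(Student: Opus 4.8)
The plan is to produce, for the composition of $K > (d+1)(\mathrm{N}-1)+d$ delayed graphs, a single node that reaches every node of the augmented graph by a directed path of length exactly $K$; such a node is then a root joined to all others by a direct edge in the composed graph, which is precisely a strong root structure. The two structural ingredients are the fully connected agent subgraph furnished by \textbf{Lemma 5} and the deterministic shift edges $p_0 \to p_1 \to \cdots \to p_d$ (the weight-one entries of $W(\tau_k)$) that are present in \emph{every} delayed graph. I would take as candidate root any current-state node $r_0 \in \{1_0,\dots,\mathrm{N}_0\}$.

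First I would reach the current-state layer. Because every delayed graph in the combination has a strongly connected quotient, \textbf{Lemma 5} applies to any block of $(\mathrm{N}-1)(d+1)$ consecutive graphs and yields a fully connected agent subgraph; in particular there is a directed path of length $(\mathrm{N}-1)(d+1)$ from $r_0$ to each current-state node $j_0$. I would then reach the delayed layers by appending the shift edges $j_0 \to j_1 \to \cdots \to j_s$, which are available in every subsequent graph, obtaining a directed path from $r_0$ to the $s$-th delayed copy $j_s$ of length $(\mathrm{N}-1)(d+1)+s$ for each $0 \le s \le d$.

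The last step is to equalize all these path lengths to the common value $K$. Since $r_0$ is a current-state node it carries a self-loop, so I would prepend $K-(\mathrm{N}-1)(d+1)-s$ self-loop steps at $r_0$ before executing the path above. This padding is nonnegative for every target simultaneously exactly when $K \ge (\mathrm{N}-1)(d+1)+d = (d+1)(\mathrm{N}-1)+d$; hence once the number of composed graphs exceeds this bound, $r_0$ admits a length-$K$ directed path to every node, i.e.\ $r_0$ is a root with a direct edge to each remaining node. Combined with the hierarchical ordering that \textbf{Lemma 2} guarantees as soon as at least $d$ graphs are composed, this establishes the strong root structure.

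I expect the main obstacle to be the orientation-and-self-loop bookkeeping in the augmented delay graph rather than any deep estimate. The delayed copies $j_s$ with $s\ge 1$ carry only the forward shift edge $j_s \to j_{s+1}$ and no self-loop, so the length normalization cannot be absorbed at the target end and must be performed at the root, where the self-loop resides; it is exactly this asymmetry that produces the additive surplus $+d$ over the agent-diameter term $(d+1)(\mathrm{N}-1)$. A subsidiary point requiring care is that \textbf{Lemma 5} be invoked on a consecutive sub-block placed after the padding and before the shift steps, which is legitimate precisely because the strongly-connected-quotient hypothesis holds for every graph in the sequence.
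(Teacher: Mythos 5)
The paper does not actually prove this lemma: it is one of the ``Key Lemmas'' stated without proof, with Remark~1 noting only that it is adapted from Proposition~4 of Cao et al.\ and that the bound drops from $(d+1)(\mathrm{N}-1)^2+d$ to $(d+1)(\mathrm{N}-1)+d$ once each graph in the sequence is assumed to have a strongly connected quotient. Your proposal therefore supplies an argument the paper omits, and it is a correct one: you exhibit, for each target node $j_s$, a walk of length exactly $K$ from a fixed current-state node $r_0$, built from self-loop padding at $r_0$, a block of $(\mathrm{N}-1)(d+1)$ consecutive graphs that Lemma~5 turns into a fully connected agent subgraph, and $s$ shift edges $j_0\to j_1\to\cdots\to j_s$; since a direct edge in the composition is precisely a walk using one edge from each factor graph, equalizing the lengths is exactly what is required, and your accounting of where the additive $+d$ comes from (the delayed copies carry no self-loops, so all padding must sit at the root end) is the right explanation of the bound. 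Your observation that the Lemma~5 block may be re-positioned per target because the strongly-connected-quotient hypothesis holds for every graph in the sequence closes the one placement issue that could otherwise arise. Compared with the paper's citation-only treatment, your route makes the improvement over Cao et al.\ explicit: the stronger per-graph hypothesis is consumed exactly once, through Lemma~5, to replace the $(\mathrm{N}-1)^2$ diameter-type factor by $(\mathrm{N}-1)$. Two minor caveats: your argument inherits whatever is needed to prove Lemma~5 itself (also unproved in the paper), and the final appeal to Lemma~2 for the ``hierarchical'' part is slightly loose, since Lemma~2 asserts a strong root hierarchy (a per-agent-subgraph property) rather than a hierarchical ordering of the whole composed graph; the substantive content of the statement, namely a root with direct edges to all other nodes, is however fully established by your construction.
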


\begin{lemma}
    If a path $v^1_{d_1}, \ldots, v^k_{d_k}$ exists in the extended delayed graph sequence $\mathcal{G}_d(\tau_1), \ldots, \mathcal{G}_d(\tau_k)$, then the $(d_k\mathrm{N} + v^k, d_1\mathrm{N} + v^1)$ entry of matrix $M_A$ includes the product $\prod\limits_{t=1}^k P^A_{v^t}(\tau_{t-d_t})$.
\end{lemma}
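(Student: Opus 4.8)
The plan is to read the targeted block entry of $M_A$ as a generating object for walks in the delayed graph sequence, using the standard expansion of a product of block matrices into a sum over intermediate indices. I would first write $M_A$ in the telescoped form $\mathbf{P}_A(\tau_k)\,\widehat{W}_k\,\mathbf{P}_A(\tau_{k-1})\,\widehat{W}_{k-1}\cdots\mathbf{P}_A(\tau_1)\,\widehat{W}_1\,\mathbf{P}_A(\tau_0)$ with $\widehat{W}_t = W(\tau_t)\otimes I_n$, and then expand the $(d_kN+v^k,\,d_1N+v^1)$ block using the multilinear rule $(M^{(1)}\cdots M^{(r)})_{ab}=\sum_{c_1,\dots,c_{r-1}} M^{(1)}_{ac_1}M^{(2)}_{c_1c_2}\cdots M^{(r)}_{c_{r-1}b}$, where each $c_s$ ranges over all node-delay indices. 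Every surviving summand corresponds to one choice of intermediate indices, i.e.\ to one walk through the augmented state space, and the goal is to show that the walk dictated by the path $v^1_{d_1},\dots,v^k_{d_k}$ contributes exactly the product $\prod_{t=1}^k P^A_{v^t}(\tau_{t-d_t})$, carried by a positive scalar built from the $1/m_i$ weights of $W$.

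Two structural facts drive the identification. First, since each $\mathbf{P}_A(\tau)$ is block diagonal with $(lN+i)$-th diagonal block equal to $P^A_i$ at the delayed time indexed by $l$, passing through a $\mathbf{P}_A$ factor never changes the index: it only left-multiplies the running operator product by the projection sitting at the current agent-delay slot, and the delay slot $l$ is exactly what turns the stage time into the argument $\tau_{t-d_t}$ appearing in the statement. Second, $\widehat{W}_t$ has $(a,b)$ block $W_{a,b}(\tau_t)I_n$, which is nonzero precisely when $\mathcal{G}_d(\tau_t)$ contains the corresponding edge, whether a communication edge with weight $1/m_i(\tau_t)$ or a delay-shift edge $i_l\!\to\! i_{l-1}$ with weight $1$. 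Hence the admissible index transitions across each $\widehat{W}_t$ are exactly the edges of $\mathcal{G}_d(\tau_t)$, so a sequence of transitions realizable inside the product is the same data as a path in the graph sequence, matching the composition correspondence already recorded in the \emph{Paths in a Graph Sequence} paragraph.

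I would organize the argument as an induction on the number of stages $k$. For the base case I verify directly that a single edge of $\mathcal{G}_d(\tau_1)$ produces, in the corresponding block of $\mathbf{P}_A(\tau_1)\widehat{W}_1\mathbf{P}_A(\tau_0)$, the factor $P^A_{v^1}$ at the correct time with a positive coefficient. For the inductive step I factor $M_A = \mathbf{P}_A(\tau_k)\,\widehat{W}_k\,M_A^{(k-1)}$, apply the block-multiplication rule once, and insert the induction hypothesis for $M_A^{(k-1)}$: extending the path by its final edge $v^{k-1}_{d_{k-1}}\!\to\! v^k_{d_k}$ selects a nonzero entry of $\widehat{W}_k$, while the outer $\mathbf{P}_A(\tau_k)$ appends the factor $P^A_{v^k}(\tau_{k-d_k})$ on the left, reproducing the claimed product. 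Because every contributing coefficient is a product of nonnegative $W$-weights and the projection operators are retained verbatim, the targeted product appears as one summand of the block polynomial, which is exactly the assertion that the entry \emph{includes} it.

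The step I expect to be the main obstacle is the delay-index bookkeeping: reconciling the orientation of the delayed-graph edges with the row/column convention of $W$, and tracking how delay-shift edges advance the slot index $l$ so that the projection collected at stage $t$ carries precisely the time argument $\tau_{t-d_t}$ rather than an off-by-one neighbour. This part is purely combinatorial, with no analysis involved, but it must be handled carefully because each node carries both an agent label and a delay label, and the two kinds of edges in $\mathcal{G}_d(\tau_t)$ act differently on the operator product, delay-shift edges contributing the identity and communication edges contributing genuine projections. Once this correspondence is fixed, the rest is the routine expansion described above.
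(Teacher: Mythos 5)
The paper does not actually prove this lemma: it is listed among the ``Key Lemmas'' that are \emph{stated without proof} and attributed to the cited references (Cao et al., Liu--Mou--Morse, Yi et al.), so there is no in-paper argument to compare against. Your proposal supplies the standard and correct argument: expand the block product $\mathbf{P}_A(\tau_k)\widehat{W}_k\cdots\widehat{W}_1\mathbf{P}_A(\tau_0)$ over intermediate agent--delay indices, observe that the nonzero blocks of each $\widehat{W}_t$ are exactly the edges of $\mathcal{G}_d(\tau_t)$ (with nonnegative weights, so no cancellation among summands), and that the block-diagonal factor $\mathbf{P}_A(\tau_t)$ evaluated at slot $d_tN+v^t$ contributes precisely $P^A_{v^t}(\tau_{t-d_t})$; the induction on the number of stages is the natural way to organize this. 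Two small points of care: the projections are collected at the \emph{visited nodes} (diagonal blocks of $\mathbf{P}_A$), not carried by the edges themselves --- your closing remark slightly conflates the two, though a walk descending the delay chain only repeats an idempotent factor, so nothing breaks --- and the off-by-one between $k$ nodes and $k$ graphs that you flag originates in the paper's own statement of the lemma rather than in your argument.
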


\begin{remark}
\textbf{Lemma 6} is adapted from \textbf{Proposition 4} of~\cite{Cao2008Reaching}. While the original proposition requires $(d+1)(\mathrm{N}-1)^2 + d$ delayed graphs for root structure under strong root assumption, this bound reduces to $(d+1)(\mathrm{N}-1) + d$ under the weaker assumption of strongly connected quotient graphs.
\end{remark}

\subsection{Theorem Proof}

To construct the condition under which $M_A$ is strictly contractive, we first define a toolset $\mathcal{P}^i_n$. Each element of this set is a sequence of projection matrices, restricted to $\text{Row}(A)$, generated from the row blocks of $A_i$, with a sequence length of at least $n$. Any sequence in $\mathcal{P}^i_n$ satisfies the property that for every consecutive subsequence of length greater than $n$, the union of row indices used by the projection matrices in this subsequence covers all row vectors of $A_i$.

Based on the above tools, we present two propositions to prove the convergence of systems \eqref{eq:投影共识系统} and \eqref{eq:普通共识系统}, respectively.

\subsubsection{Convergence of the Projection Consensus System}

\begin{proposition}
    If the matrices $W(\tau_k), \cdots, W(\tau_1)$ involved in $M_A$, the associated delayed graph sequence $\mathcal{G}_d(\tau_k),\cdots,\mathcal{G}_d(\tau_1)$ generated by them and the projection matrix sequences $P_i^A(\tau_k), \cdots, P_i^A(\tau_0)$ for each agent $i$ satisfy the following conditions:

(1) There exists a positive integer $l$ such that the graph sequence $\mathcal{G}_d(\tau_k),\cdots,\mathcal{G}_d(\tau_1)$ is $l$-strongly connected.

(2) For every $i$, there exists a positive integer $\kappa$ such that the projection sequence $P_i^A(\tau_k), \cdots, P_i^A(\tau_0)$ belongs to $\mathcal{P}^i_\kappa$.

(3) Let $\varrho$ be the least common multiple of $\kappa$ and $(d+1)l$, and suppose $k \geq \big(N(N-1)+2\big)\varrho$.

Then $M_A$ is a strictly contractive matrix.
\end{proposition}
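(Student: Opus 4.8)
The plan is to reduce the spectral statement to a combinatorial one via \textbf{Proposition 1}. Since $\lceil M_A\rceil = \lceil W(\tau_k)\rceil\cdots\lceil W(\tau_1)\rceil$ is a product of row-stochastic matrices, $\|\lceil M_A\rceil\|_\infty = 1$, so $\|M_A\|_A < 1$ holds as soon as every row of $M_A$ contains a \emph{complete} projection-matrix polynomial. By \textbf{Lemma 1} a term is complete precisely when the row-index sets of its constituent projectors jointly contain some $S\in\mathcal{L}(A)$, a maximal linearly independent subset of the rows of $A$. Thus the whole proof becomes: for each target node $i$, exhibit one path in the delayed-graph sequence whose accumulated projectors cover such an $S$, and then read off the corresponding term of $M_A$ through \textbf{Lemma 7}.

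First I would partition the time indices $\{0,\dots,k\}$ into consecutive super-blocks of length $\varrho=\mathrm{lcm}\big(\kappa,(d{+}1)l\big)$; the hypothesis $k\ge(N(N{-}1)+2)\varrho$ yields at least $N(N{-}1)+2$ such super-blocks, the two spare ones absorbing the terminal projectors $\mathbf{P}_A(\tau_0)$ and $\mathbf{P}_A(\tau_k)$ of~\eqref{eq:状态转移矩阵}. Because $\varrho$ is a multiple of $(d{+}1)l$, condition~(1) makes each length-$l$ window within a super-block compose to a graph with strongly connected quotient; \textbf{Lemma 2} supplies a strong root hierarchy over any $d$ consecutive delayed graphs, and \textbf{Lemma 3} then promotes each super-block to a strongly connected \emph{agent subgraph}. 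Composing $N$ such super-blocks and invoking \textbf{Lemma 4} (more than $N{-}1$ strongly connected self-looped graphs compose to a fully connected graph) furnishes all-to-all reachability, so a path between any ordered pair of agents can be routed on demand.

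With routing secured, for a fixed row $i$ I would assemble a single admissible path that dwells in turn on agents $q=1,\dots,N$, using the self-loops $(i,i)\in\mathcal{V}_d(\tau_k)$ to linger. Because $\varrho$ is simultaneously a multiple of $\kappa$, holding the path on one agent for a super-block delivers a $\kappa$-window of that agent's local projectors, whereupon condition~(2), namely $P_q^A(\tau_\bullet)\in\mathcal{P}^q_\kappa$, forces those projectors to cover every row of $A_q$. Interleaving the $N$ dwell-windows with the fully connected routing stages between them is exactly what the budget of $N(N{-}1)+2$ super-blocks accommodates. Applying \textbf{Lemma 7} to this path places in the $(i,\cdot)$ entry of $M_A$ a projection product whose indices exhaust $\bigcup_{q=1}^{N}\mathbf{m}_q=\mathbf{m}$ and therefore contain some $S\in\mathcal{L}(A)$; this is a complete term in row $i$, and as $i$ was arbitrary, \textbf{Proposition 1} delivers $\|M_A\|_A<1$.

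The hard part will be enforcing the two requirements—graph reachability and per-agent projector coverage—\emph{along one and the same path}, rather than through two separate constructions. Concretely, I must verify that a single sequence of edges, admissible at every time step under the $\mathcal{V}_d(\tau_k)$ and delay constraints, can be made to linger on each agent for a full $\kappa$-window \emph{while} the projectors $P_q^A(\tau)$ actually interleaved along that window are those mandated by $\mathcal{P}^q_\kappa$, and that the delay bookkeeping in \textbf{Lemma 7} together with the boundary projectors does not break this alignment. The choice $\varrho=\mathrm{lcm}\big(\kappa,(d{+}1)l\big)$ is precisely engineered to synchronize the connectivity period $(d{+}1)l$ with the coverage period $\kappa$, and making this synchronization airtight for every target row is the crux of the argument.
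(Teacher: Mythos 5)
Your proposal follows essentially the same route as the paper's proof: reduce to the row-wise completeness criterion of \textbf{Proposition 1}, partition the horizon into $\varrho$-blocks, use the self-loops to dwell on each agent for a $\kappa$-multiple window (so condition (2) forces coverage of all rows of $A_q$), use the joint-connectivity lemmas to route between agents, and read off the resulting complete term via \textbf{Lemma 7}. The "hard part" you flag — interleaving dwell windows and routing stages along one admissible path, including reaching the delayed copies $g_s$ — is exactly what the paper's explicit construction of the identity walks $\mathcal{O}_{i1}^{g}$, the inter-agent paths $\mathcal{O}_{i2}^{g,h}$, and the terminal path $\mathcal{O}_{N2}^{h,h_s}$ (via the strong root hierarchy of \textbf{Lemma 2}) carries out.
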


\begin{proof}
The proof strategy primarily hinges on \textbf{Lemma 7}, which connects the path structures within the delayed graph to the polynomial components of each block projection in $M_A$. By leveraging the $l$-strong connectivity of the delayed graph sequence, together with \textbf{Lemmas 3, 4,} and \textbf{5}, and the fact that $P_i^A(\tau_k), \cdots, P_i^A(\tau_0) \in \mathcal{P}^i_\kappa$, we identify paths within $\mathcal{G}_d(\tau_k), \cdots, \mathcal{G}_d(\tau_1)$ that ensure each block row of $M_A$ satisfies \textbf{Proposition 1}, thus confirming the strict contraction property of $M_A$.

Let $\omega = N\varrho$, and partition the graph sequence $\mathcal{G}_d(\tau_1), \cdots, \mathcal{G}_d(\tau_k)$ into subgroups $\mathbf{G}_i = \left\{ \mathcal{G}_d(\tau_{(i-1)\omega+1}), \cdots, \mathcal{G}_d(\tau_{i\omega}) \right\}$, for $i \in \{1, 2, \cdots, N-1\}$, and $\mathbf{G}_N = \left\{ \mathcal{G}_d(\tau_{(N-1)\omega+1}), \cdots, \mathcal{G}_d(\tau_s) \right\}$. Each $\mathbf{G}_i$ is subdivided into $\mathbf{G}_i^1$ and $\mathbf{G}_i^2$, likewise for $\mathbf{G}_N$.

In $\mathbf{G}_i^1$, due to the presence of self-loops on each node in the delayed graph’s subset $\mathcal{N}$, every node $g \in \mathcal{N}$ admits an identity walk $\mathcal{O}_{i1}^g$ of length $\varrho+1$ entirely composed of node $g$.

Next, consider $\mathbf{G}_i^2$. Since the original graph sequence is $l-$strongly connected, \textbf{Lemmas 3} and \textbf{4} imply that each composite graph $\mathcal{G}_d(\tau_{(i-1)\omega+(j+1)\varrho}) \circ \cdots \circ \mathcal{G}_d(\tau_{(i-1)\omega+j\varrho+1})$ is strongly connected. By \textbf{Lemma 5}, the larger composition $\mathcal{G}_d(\tau_{i\omega}) \circ \cdots \circ \mathcal{G}_d(\tau_{(i-1)\omega+\varrho+1})$ forms a complete subgraph. Hence, for any pair $g, h \in \mathcal{N}$, there exists a path $\mathcal{O}_{i2}^{g,h}$ on $\mathbf{G}_i^2$ linking $g$ to $h$.

A similar reasoning applies to $\mathbf{G}_N$. Each node $h \in \mathcal{N}$ supports an identity walk $\mathcal{O}_{N1}^h$ on $\mathbf{G}_N^1$. Given that the length of $\mathbf{G}_N^2$ exceeds $d$, \textbf{Lemma 2} ensures the existence of a strongly rooted hierarchical structure. For any $h \in \mathcal{N}$ and its descendant $h_s \in \mathcal{N}_h$, a corresponding path $\mathcal{O}_{N2}^{h,h_s}$ exists on $\mathbf{G}_N^2$.

Now, for any node $g \in \mathcal{N}$ and index $s \in \{1,2,\cdots,d\}$, construct a node visit sequence $h = i_1, i_2, \cdots, i_N = g$, where $h \ne g$, and all other elements form a permutation of $\mathcal{N}$. For each $k \in \{1,\cdots,N-1\}$, identify the corresponding identity walk $\mathcal{O}_{i1}^{i_k}$ and inter-node path $\mathcal{O}_{i2}^{i_k,i_{k+1}}$. Also include the final identity walk $\mathcal{O}_{N1}^{i_N}$ and terminal path $\mathcal{O}_{N2}^{i_N,{i_N}_s}$. Concatenating these paths yields the overall composite path.

By \textbf{Lemma 7}, the projection polynomial in the block row $sN+g$, block column $h$ of $M_A$ contains the product $\overset{\varrho}{\prod\limits_{s=1}} P^A_{i_1}(\tau_s)$ $\cdots$ $\overset{\varrho}{\prod\limits_{s=(j-1)\omega+1}} P^A_{i_j}(\tau_s)$ $\cdots$ $\overset{\varrho}{\prod\limits_{s=(N-2)\omega+1}} P^A_{i_{N-1}}(\tau_s) $ $\overset{\varrho}{\prod\limits_{s=(N-1)\omega+1}} P^A_{i_N}(\tau_s)$ $\cdots$. Since all $P_i^A(\tau_j) \in \mathcal{P}_\kappa^i$, and $g, s$ are arbitrary, \textbf{Proposition 1} implies that $\|M_A\|_A < 1$, ensuring that the spectral radius of $M_A$ is strictly less than one. Given the finiteness of all involved graph and matrix sets, a uniform constant $\gamma$ exists:
\[
\gamma =  \sup\limits_{\mathbf{G}_1^1} \cdots \sup\limits_{\mathbf{G}_{N-1}^1}\sup\limits_{\mathbf{G}_1^2} \cdots \sup\limits_{\mathbf{G}_{N-1}^2}\sup\limits_{\mathbf{G}_1^0}\sup\limits_{\mathbf{G}_2^0} \Big\|\mathbf{P}_A(\tau_s) \left(W(\tau_s) \otimes I_n\right) \mathbf{P}_A(\tau_{s-1})(W(\tau_{s-1})  \otimes I_n) \cdots \mathbf{P}_A(\tau_{1}) \left(W(\tau_{1}) \otimes I_n\right) \mathbf{P}_A(\tau_0) \Big\|_A < 1.
\]
\end{proof}

\subsubsection{Convergence of the Standard Consensus System}
We next analyze the convergence of system \eqref{eq:普通共识系统}, which models a delayed consensus problem over $\mathrm{N}$ agents in $\mathbb{R}^n$. Such problems have been extensively studied in the literature, including~\cite{doi:10.1137/060657005} and~\cite{Cao2008Reaching}. In the proposed formulation, the system \eqref{eq:普通共识系统} can be viewed as a time-inhomogeneous Markov chain, with the state transition matrix given by $W(\tau_k) \otimes I_n$. Therefore, the convergence analysis is equivalent to studying the matrix product $\prod\limits_{k=1}^{s}(W(\tau_k) \otimes I_n)$, whose behavior has been characterized in~\cite{doi:10.1137/060657005}. In particular, the convergence is closely related to the joint strong rootedness of the graph sequence $\mathcal{G}_d(\tau_k)$, as formalized below.

\begin{proposition}
    Let $S(t)$ be a row-stochastic matrix such that the graph induced by each $S(t)$ possesses a strongly rooted structure. Then, there exists a non-negative vector $\mathbf{c}$, with $\mathbf{1}^T \mathbf{c} = 1$, determined by the matrix sequence $\cdots S(t) \cdots S(2)S(1)$, such that:
$\lim\limits_{t\to \infty} (S(t)\otimes I) \cdots (S(1) \otimes I) = \mathbf{1}\mathbf{c}^T \otimes I$,
and the convergence is exponential.
\end{proposition}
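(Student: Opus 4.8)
The plan is to recast the graph-theoretic hypothesis as a \emph{scrambling} condition on the row-stochastic matrices $S(t)$ and then invoke the contraction of Dobrushin's coefficient of ergodicity. By the definition of a strong root structure recalled above, each $S(t)$ has a root node with a direct edge to every other node; under the edge-to-entry convention used for $W(\tau_k)$, this forces the corresponding column of $S(t)$ to be entirely positive. Any stochastic matrix possessing a fully positive column is scrambling, so the coefficient $\tau(S) = 1 - \min_{i,j}\sum_k \min(S_{ik}, S_{jk})$ satisfies $\tau(S(t)) \le 1 - \delta$, where $\delta>0$ lower-bounds the entries of that positive column. Because the agent set is finite and the admissible weight matrices take finitely many values, such a uniform $\delta$ exists.

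First I would record the submultiplicativity $\tau(S_1 S_2) \le \tau(S_1)\tau(S_2)$, which yields the geometric bound $\tau(B_t) \le (1-\delta)^t$ for the backward product $B_t = S(t)\cdots S(1)$. Next, since $\tau(B_t)$ controls the $\ell_1$-diameter $d_t$ of the set of rows of $B_t$ via $d_t \le 2\tau(B_t)$, the rows of $B_t$ pairwise approach one another at the exponential rate $(1-\delta)^t$. To upgrade this row-coalescence into genuine convergence of the product, I would observe that each row of $B_{t+1} = S(t+1)B_t$ is a convex combination of the rows of $B_t$; hence the convex hulls $H_t$ of the row sets are nested, $H_{t+1} \subseteq H_t$, with diameters shrinking to zero. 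The intersection $\bigcap_t H_t$ is therefore a single point $\mathbf{c}^T$, every row of $B_t$ converges to it exponentially, and closedness of the stochastic simplex guarantees $\mathbf{c}\ge 0$ with $\mathbf{1}^T\mathbf{c}=1$. This establishes $B_t \to \mathbf{1}\mathbf{c}^T$.

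Finally, the Kronecker structure is handled by the identity $(S(t)\otimes I)\cdots(S(1)\otimes I) = B_t \otimes I$, so the block product converges to $(\mathbf{1}\mathbf{c}^T)\otimes I$ at the same exponential rate, completing the claim. The main obstacle is precisely the passage from coalescence of rows to convergence of the matrix itself: knowing $\tau(B_t)\to 0$ shows the rows approach each other but not that any fixed row converges, and the nested-convex-hull argument (equivalently, a Cauchy estimate $\|B_{t+1}-B_t\| \lesssim (1-\delta)^t$) is what pins down the limiting vector $\mathbf{c}$. A secondary point requiring care is justifying the uniform bound $\delta$; absent finiteness one would instead assume a uniform positive lower bound on the nonzero weights, which is consistent with the bounded-delay, finite-agent setting of the paper.
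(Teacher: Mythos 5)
Your proof is essentially correct, but it takes a genuinely different route from the paper: the paper does not prove this proposition at all, instead presenting it as a direct consequence of the results in~\cite{doi:10.1137/060657005} (the strongly-rooted-graph convergence theory of Cao, Morse and Anderson, which proceeds by graph compositions and the hierarchical/rooted-structure machinery already set up in the appendix). You instead give a self-contained analytic argument via Dobrushin's coefficient of ergodicity: strong rootedness $\Rightarrow$ a positive column $\Rightarrow$ scrambling with $\tau(S(t))\le 1-\delta$, submultiplicativity gives geometric decay of $\tau(B_t)$, and the nested-convex-hull (equivalently Cauchy) argument correctly upgrades row coalescence to convergence of the backward product to $\mathbf{1}\mathbf{c}^T$, with the Kronecker identity $(S(t)\otimes I)\cdots(S(1)\otimes I)=B_t\otimes I$ finishing the claim. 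This buys a short, quantitative, citation-free proof with an explicit rate $(1-\delta)^t$; the paper's route buys consistency with the graph-theoretic lemmas it already needs elsewhere (Lemmas 2--7) and avoids re-deriving ergodic-coefficient facts.

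One point needs to be made explicit rather than glossed over. The definition of a strong root structure in the paper only requires the root $r$ to have direct edges to all \emph{other} nodes, which under the edge-to-entry convention gives $S_{ir}>0$ for $i\neq r$ but says nothing about $S_{rr}$. Your scrambling bound uses $\sum_k\min(S_{ik},S_{jk})\ge\min(S_{ir},S_{jr})$ for \emph{all} pairs, including pairs containing row $r$ itself, so you need $S_{rr}>0$ as well; without it the claim fails outright (the permutation matrix $\left(\begin{smallmatrix}0&1\\1&0\end{smallmatrix}\right)$ has a strongly rooted graph in this weaker sense, yet its powers do not converge). The gap is closed in this paper's setting because every communication graph is declared to contain a self-loop at each node, so the root's column is indeed entirely positive; you should state that you are invoking this self-loop convention. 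Your remark on the uniform lower bound $\delta$ is handled correctly: the weights are reciprocals of neighborhood sizes over a finite agent set, so $\delta$ exists.
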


This proposition is a direct consequence of the results in~\cite{doi:10.1137/060657005}, and it guarantees the exponential convergence of the system in \eqref{eq:普通共识系统}. It further indicates that the final consensus state is influenced by the initial projection error $\mathbf{e}_2(\tau_0)$, and the evolution of each agent’s state lies within $\text{Row}_\perp(A)$. If the initial state $x_i(t_{i,0})$ is selected such that $e_i(t_{i,0}) = 0$, the effect of \eqref{eq:普通共识系统} can be neglected. In this case, convergence of the projected consensus system \eqref{eq:投影共识系统} ensures that the overall system converges to the solution $x^*$. A natural and simple choice is to set $x_i(t_{i,0}) \in \text{Row}(A)$, for instance, $x_i(t_{i,0}) = 0$.

\subsubsection{Convergence of the Algorithm}

Thus far, we have established two major propositions regarding system convergence. To rigorously derive the convergence of the proposed algorithm in a probabilistic framework, a natural and widely accepted strategy is to construct specific convergence events that fulfill the conditions of these propositions, and then invoke probabilistic arguments to show that such events occur infinitely often with probability one. Toward this end, we introduce a foundational result from probability theory that underpins the subsequent analysis:

\begin{lemma}[Borel–Cantelli]
(i) If a sequence of events $\{A_n\}$ satisfies $\sum\limits_{n=1}^\infty P(A_n) < \infty$, then almost surely only finitely many of them occur, that is:
\[P\left( \limsup_{n \to \infty} A_n \right) = 0.\]

(ii) If the events $\{A_n\}$ are mutually independent and $\sum\limits_{n=1}^\infty P(A_n) = \infty$, then almost surely infinitely many of them occur, that is:
\[P\left( \limsup_{n \to \infty} A_n \right) = 1.\]
\end{lemma}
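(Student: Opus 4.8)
The plan is to prove both parts by first rewriting the tail event $\limsup_{n\to\infty} A_n$ in its canonical set-theoretic form and then applying elementary properties of the probability measure $P$. Recall that
\[
\limsup_{n\to\infty} A_n = \bigcap_{n=1}^\infty \bigcup_{k=n}^\infty A_k,
\]
which is precisely the event that infinitely many of the $A_n$ occur. The two parts then diverge in technique: part (i) rests only on countable subadditivity, whereas part (ii) genuinely requires the independence hypothesis.

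For part (i), I would fix an arbitrary $n$ and use the inclusion $\limsup_{m} A_m \subseteq \bigcup_{k=n}^\infty A_k$, which follows since the outer intersection is contained in each of its terms. Monotonicity together with countable subadditivity of $P$ then gives
\[
P\!\left(\limsup_{m\to\infty} A_m\right) \le P\!\left(\bigcup_{k=n}^\infty A_k\right) \le \sum_{k=n}^\infty P(A_k).
\]
Because $\sum_{k=1}^\infty P(A_k) < \infty$, the tail on the right tends to $0$ as $n \to \infty$; since the left-hand side is independent of $n$, it must vanish. This step uses no independence and amounts to the union bound combined with convergence of the series tail.

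For part (ii), I would argue on the complement. By De Morgan,
\[
\left(\limsup_{n\to\infty} A_n\right)^c = \bigcup_{n=1}^\infty \bigcap_{k=n}^\infty A_k^c,
\]
so by subadditivity it suffices to show $P\!\left(\bigcap_{k=n}^\infty A_k^c\right) = 0$ for every fixed $n$. Here independence enters: for any finite $m \ge n$, independence of the $A_k$ (hence of the $A_k^c$) yields $P\!\left(\bigcap_{k=n}^m A_k^c\right) = \prod_{k=n}^m \bigl(1 - P(A_k)\bigr)$. Applying the elementary inequality $1 - x \le e^{-x}$ termwise gives
\[
\prod_{k=n}^m \bigl(1 - P(A_k)\bigr) \le \exp\!\left(-\sum_{k=n}^m P(A_k)\right).
\]
Since $\sum_k P(A_k) = \infty$, the exponent diverges to $-\infty$ as $m \to \infty$, so the finite products tend to $0$. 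Continuity of measure from above, applied to the decreasing sequence $\bigcap_{k=n}^m A_k^c \downarrow \bigcap_{k=n}^\infty A_k^c$, then gives $P\!\left(\bigcap_{k=n}^\infty A_k^c\right) = 0$, and summing over $n$ shows the complement has probability $0$.

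The main obstacle, and the only place beyond routine bookkeeping, is the passage in part (ii) from finite to infinite intersections: independence is directly usable only on finitely many events, so one must first bound the probability of a finite intersection and then invoke continuity from above to send $m \to \infty$. The role of the inequality $1 - x \le e^{-x}$ is exactly to convert the \emph{divergence} of $\sum_k P(A_k)$ into the \emph{vanishing} of the product; absent independence this conversion is unavailable, which is why part (ii), unlike part (i), cannot dispense with the independence assumption.
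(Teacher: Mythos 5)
Your proof is correct: part (i) is the standard union-bound/tail-of-a-convergent-series argument, and part (ii) correctly handles the only delicate point, namely applying independence to finite intersections of complements via $1-x\le e^{-x}$ and then passing to the infinite intersection by continuity from above. Note that the paper states this lemma as a classical result from probability theory and offers no proof of its own (it only \emph{uses} part (ii) later), so there is nothing to compare against; your argument is the standard textbook proof and would serve as a complete justification.
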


In the context of this analysis, we apply only part (ii) of the lemma. That is, if the event sequence $\{A_n\}$ is independent and the probability sum diverges, then with probability one, the events occur infinitely often.

\begin{proof}[The Proof of Theorem 2]
The central idea of the proof is to construct error-reducing events and show that these events occur infinitely often with probability one. Two critical components underlie this approach. First, using the assumption $P\big(c(\infty)\big) = 0$, we show that there exists a finite integer $l$ such that the graph sequence $\mathcal{G}_d(\tau_k), \cdots, \mathcal{G}_d(\tau_1)$ is $l-$strongly connected, thereby allowing the formulation of connectivity events. Second, we identify a constant $\kappa$ and construct events in which the projection matrices satisfy $P_i^A(\tau_k), \cdots, P_i^A(\tau_0) \in \mathcal{P}^i_\kappa$. The intersection of these two types of events forms a composite event, whose infinite occurrence can be guaranteed using \textbf{Lemma 8}.

To begin, we provide an explicit result regarding the convergence of system \eqref{eq:投影共识系统}. Since the event $\mathcal{C}(\infty)$ occurs with zero probability, we have:
\[
\bigcup_{l=1}^{\infty} \mathcal{C}(l) = \left( \bigcap_{l=1}^{\infty} \mathcal{C}(l)^c \right)^c = \mathcal{C}(\infty)^c, \text{ which implies } P\left( \bigcup_{l=1}^{\infty} \mathcal{C}(l) \right) = 1.
\]

Thus, with probability one, there exists a finite integer $l$ such that the event $\mathcal{C}(l)$ occurs. As a result, the $l-$strong connectivity of the graph sequence $\mathcal{G}_d(\tau_k), \cdots, \mathcal{G}_d(\tau_1)$ is almost surely guaranteed, completing the first part of the argument.

Let $\eta = \max\{m_1, m_2, \cdots, m_N\}$, and define $\mathcal{T}_i(\tau_s) = \{ t_{i,k} : \tau_s < t_{i,k} \leq \tau_s + \eta \, \overline{T} \}$. According to \eqref{eq:communication-assumption}, $\mathcal{T}_i(\tau_s)$ is a finite set, and agent $i$ will select each block of $A_i$ exactly $\eta$ times over this interval. Define $\mathcal{T}(\tau_k) = \bigcup\limits_{i=1}^N \mathcal{T}_i(\tau_k)$; for any $\tau_k$, we have $|\mathcal{T}(\tau_k)| < \infty$, implying the existence of a positive integer $\kappa$ such that $|\mathcal{T}(\tau_k)| \leq \kappa$. Let $\mathcal{P}_i(\tau_s)$ denote the sequence of random orthogonal projection matrices chosen by agent $i$ over $\mathcal{T}(\tau_s)$, and define the event $\mathcal{J}(\tau_s) = \{\forall i, \mathcal{P}_i(\tau_s) \in \mathcal{P}_\kappa \}$. Since block selections are made independently across agents and time, we have:
\[\mathbb{P}\left( \mathcal{J}(\tau_s) \right) = \prod\limits_{i=1}^N \frac{\eta!}{m_i!(\eta - m_i)!}.\]
This expression provides a lower bound on the probability of $\mathcal{J}(\tau_s)$, denoted as $p$. This concludes the second step.

Let $\varrho$ be the least common multiple of $\kappa$ and $(d+1)l$, $\omega = N\varrho$, and $\vartheta = (N(N-1) + 2)\varrho$. According to \textbf{Proposition 2}, the event
\[\mathcal{H}(n) = \left\{ \|\mathbf{e}_1(\tau_{n\vartheta})\|_2^2 \leq \lambda \|\mathbf{e}_1(\tau_{(n-1)\vartheta+1})\|_2^2 \right\}\]
occurs whenever
\[\mathcal{B}(n) = \left\{ \mathcal{J}(\tau_{(n-1)\vartheta}), \mathcal{J}(\tau_{(n-1)\vartheta+\omega}), \cdots, \mathcal{J}(\tau_{(n-1)\vartheta+(N-1)\omega}) \right\}\]
holds. Since the projection selection process is independent of communication, we have:
\[
\begin{aligned}
    \mathbb{P}(\mathcal{H}(n) \mid \mathcal{C}(l)) &= \mathbb{P}(\mathcal{B}(n) \mid \mathcal{C}(l)) =\\
    & \prod_{i=0}^{N-1} \mathbb{P}(\mathcal{J}(\tau_{(n-1)\vartheta+i\omega})) \geq p^N.
\end{aligned}
\]
Therefore, $\mathbb{P}(\mathcal{H}(n) \mid \mathcal{C}(l)) \geq p^N > 0$, and the sum $\sum\limits_{n=0}^\infty \mathbb{P}(\mathcal{H}(n) \mid \mathcal{C}(l)) = \infty$. By \textbf{Lemma 8}, it follows that:
\[\mathbb{P}\left( \limsup_{n \to \infty} \mathcal{H}(n) \mid \mathcal{C}(l) \right) = 1.\]
This implies that conditioned on $\mathcal{C}(l)$, the event $\mathcal{H}(n)$ occurs infinitely often with probability one. Let $\mathcal{A} = \left\{ \lim\limits_{k \to \infty} \|\mathbf{e}_1(\tau_k)\|_2^2 = 0 \right\}$. Then:
\[\mathbb{P}(\mathcal{A} \mid \mathcal{C}(l)) = 1 \quad \Rightarrow \quad \mathbb{P}(\mathcal{A}^c \cap \mathcal{C}(l)) = 0.\]
Thus,
\[\mathbb{P}(\mathcal{A}) \geq \mathbb{P}(\mathcal{A} \cap \bigcup\limits_{l=1}^\infty \mathcal{C}(l)) = \mathbb{P}\left( \bigcup\limits_{l=1}^\infty \mathcal{C}(l) \right) = 1,\]
which implies $\mathbb{P}(\mathcal{A}) = 1$. Hence, system \eqref{eq:投影共识系统} converges almost surely.

To further establish the convergence of system \eqref{eq:普通共识系统}, define
\[
\sigma = [(d+1)l](N-1) + dl,\quad S_n = W(\tau_{n\sigma}) \cdots W(\tau_{(n-1)\sigma+1}).
\]
From \textbf{Lemma 6}, the graph induced by \( S_n \) possesses a strongly rooted structure. Consequently, there exists a vector \( \mathbf{c} \in \mathbb{R}^n \) such that
\[
\lim_{n \to \infty} (S_n \otimes I) \cdots (S_1 \otimes I) = \mathbf{1} \mathbf{c}^T \otimes I.
\]
Letting \( n \) be the integer quotient of \( k \) divided by \( \sigma \), we obtain
\[
\mathbf{e}_2(\tau_{k+1}) = \left[ (W(\tau_k) \cdots W(\tau_{n\sigma+1}) S_n \cdots S_1) \otimes I_n \right] \mathbf{e}_2(\tau_0).
\]
As \( k \to \infty \), it follows that \( n \to \infty \), and since
\[
W(\tau_k) \cdots W(\tau_{n\sigma+1}) \mathbf{1} \mathbf{c}^T = \mathbf{1} \mathbf{c}^T,
\]
we conclude that there exists \( x_0 \in \text{Row}_\perp(A) \) such that
\[
\lim_{k \to \infty} \mathbf{e}_2(\tau_k) = x_0 \otimes \mathbf{1},
\]
with exponential convergence rate.

Next, we derive the exponential convergence rate of the full algorithm. Note that
\[
\|\mathbf{P}(\tau_s)(W(\tau_s) \otimes I_n)\mathbf{P}(\tau_{s-1})\|_A \leq 1,
\]
implying
\[
\|\mathbf{e}_1(\tau_k)\|_2^2 \leq \|\mathbf{e}_1(\tau_s)\|_2^2, \quad \forall k \geq s.
\]
Since the event \( \bigcup_{l=1}^\infty \mathcal{C}(l) \) occurs almost surely, we can fix \( \vartheta \) such that
\[
\begin{aligned}
& \mathbb{E}\left[ \|\mathbf{e}_1(\tau_{n\vartheta})\|_2^2 \right] 
= \mathbb{E}\left[ \|\mathbf{e}_1(\tau_{n\vartheta})\|_2^2 \mid \mathcal{B}(n) \right] \mathbb{P}(\mathcal{B}(n)) + \mathbb{E}\left[ \|\mathbf{e}_1(\tau_{n\vartheta})\|_2^2 \mid \mathcal{B}(n)^c \right] (1 - \mathbb{P}(\mathcal{B}(n))) \\
&\leq \left(1 - (1 - \gamma^2) \mathbb{P}(\mathcal{B}(n))\right) \mathbb{E}\left[ \|\mathbf{e}_1(\tau_{(n-1)\vartheta})\|_2^2 \right]\leq \left(1 - (1 - \gamma^2)p^N \right)^n \mathbb{E}\left[ \|\mathbf{e}_1(\tau_0)\|_2^2 \right].
\end{aligned}
\]
Letting \( \mu = 1 - (1 - \gamma^2)p^N < 1 \), and for any \( k \geq \vartheta \), it follows that
\[
\mathbb{E}\left[ \|\mathbf{e}_1(\tau_k)\|_2^2 \right] \leq \mu^{\left(\frac{k}{\vartheta} - 1\right)} \mathbb{E}\left[ \|\mathbf{e}_1(\tau_0)\|_2^2 \right].
\]
Let
\[
\tilde{\mu} = \mu^{\frac{1}{(N(N-1)+1)\varrho}},\quad c = \frac{1}{\mu},
\]
then
\[
\mathbb{E}\left[ \|\mathbf{e}_1(\tau_k)\|_2^2 \right] \leq c \tilde{\mu}^k \mathbb{E}\left[ \|\mathbf{e}_1(\tau_0)\|_2^2 \right].
\]

In conclusion, both the standard consensus system \eqref{eq:普通共识系统} and the projected consensus system \eqref{eq:投影共识系统} converge exponentially. Specifically,
\[
\mathbf{e}_2(\tau_k) \to x_0 \otimes \mathbf{1}, \quad \text{for some } x_0 \in [\text{Row}_\perp(A)]^{dN}.
\]
Let each agent’s state decompose as \( x_i = x_{i1} + x_{i2} \), and the solution \( x^* = x_1^* + x_2^* \), where
\[
x_{i1}, x_1^* \in \text{Row}(A),\quad x_{i2}, x_2^* \in \text{Row}_\perp(A).
\]
Since \( x^* \) is the minimum-norm solution, we must have \( x_1^* = x^*, x_2^* = 0 \). Thus,
\[
\lim_{k \to \infty} x_{i2}(\tau_k) = x_0,\quad \lim_{k \to \infty} x_{i1}(\tau_k) = x^*,
\]
and
\[
\lim_{k \to \infty} x_i(\tau_k) = x^* + x_0,
\]
with exponential rate. If the initial states satisfy \( x_i(t_{i,0}) \in \text{Row}(A) \), then \( x_0 = 0 \), and the convergence is to \( x^* \).
\end{proof}

\subsection{Proof of Theorem 3}

Since the augmented system \eqref{eq:augmented-system} is always consistent, the convergence results can be directly established by invoking the proof of \textbf{Theorem 2}. Accordingly, we can redefine the error vectors and block projection matrices, and express the system in a form analogous to \eqref{eq:未分解的系统}.

Let \( z^* = (A')^\dagger b \). For any \( \tau_s \in \mathcal{T} \), define the local error \( \tilde{e}_i(\tau_s) = z_i(\tau_s) - z^* \), and stack them into a global error vector:
\[
\tilde{e}(\tau_s) = \left[ \tilde{e}_1(\tau_s)^T, \cdots, \tilde{e}_N(\tau_s)^T \right]^T \in \mathbb{R}^{N(n+m)},
\]
and the delayed error sequence:
\[
\tilde{\mathbf{e}}(\tau_s) = \left[ \tilde{e}^T(\tau_s), \cdots, \tilde{e}^T(\tau_{s-d}) \right]^T \in \mathbb{R}^{dN(n+m)}.
\]
Let \( \widetilde{P}_i(t_{i,k}) = I_{n+m} \) be the identity projection for the augmented variable. Then the update rule \eqref{eq:inconsistent-update-explicit} can be reformulated as:
\begin{equation}
    \tilde{\mathbf{e}}(\tau_{k+1}) = \widetilde{\mathbf{P}}(\tau_k) \left(W(\tau_k) \otimes I_n\right) \widetilde{\mathbf{P}}(\tau_{k-1}) \tilde{\mathbf{e}}(\tau_k).
    \label{eq:系统-不相容情形}
\end{equation}

By directly applying \textbf{Theorem 2} to \eqref{eq:系统-不相容情形}, the convergence of \eqref{eq:inconsistent-update-explicit} is immediately established. If the initial values are set as \( z_i(t_{i,0}) = 0 \), then each agent’s state converges to the minimum-norm solution \( \tilde{x}^* \).

Let the singular value decomposition of matrix \( A \) be given by
\[
A = U \Sigma V^T,
\]
where $U\in R^{m\times m}, V\in R^{n\times n}$ are orthogonal matrices, 
$\Sigma= \begin{pmatrix} 
\Sigma_r & \ 0 \\ \\ 
         0 & \ 0
 \end{pmatrix}$ with $\Sigma_r=\mathrm{diag}(\sigma_1, \cdots, \sigma_r)$ and \( r = \mathrm{rank}(A) \), the Moore–Penrose pseudoinverse of the augmented system matrix \( (A')^\dagger \) can be expressed as:
\[
(A')^\dagger =
\begin{pmatrix}
    V &  \\
     & U
\end{pmatrix}
\begin{pmatrix}
    \lambda^{-2}(I_n + \lambda^{-2} \Sigma^T \Sigma)^{-1} \Sigma^T  \\
    I_m - \lambda^{-1} \Sigma (I_n + \lambda^{-2} \Sigma^T \Sigma)^{-1} \Sigma^T 
\end{pmatrix}
U^T.
\]
Therefore, the regularized solution \( \tilde{x}^* \) is given by:
\[
\tilde{x}^* = V (I_n + \Sigma^T \Lambda^{-2} \Sigma)^{-1} \Sigma^T \Lambda^{-2} U^T b.
\]

Let \( \widetilde{\Sigma}^\dagger = \lambda^{-2} (I_n + \lambda^{-2}  \Sigma^T \Sigma)^{-1} \Sigma^T \), then
$ \widetilde{\Sigma}^\dagger =
\begin{pmatrix} 
\widetilde{\Sigma}_r & \ 0 \\ \\ 0 & \ 0 
\end{pmatrix} 
$
with $\widetilde{\Sigma}_r=\mathrm{diag}\left(\frac{\sigma_1}{\sigma_1^2 + \lambda^2}, \cdots, \frac{\sigma_r}{\sigma_r^2 + \lambda^2}\right).$

Now, define  matrix
$
\Sigma_0 = \begin{pmatrix} 
\widetilde{\Sigma}_0  & \ 0 \\ \\ 0 & \ 0 
\end{pmatrix} 
$  with $\widetilde{\Sigma}_0=\mathrm{diag}\left(\frac{\lambda^2}{\sigma_1^2 + \lambda^2}, \cdots, \frac{\lambda^2}{\sigma_r^2 + \lambda^2}\right),
$  then the absolute error can be estimated as:
\[
\begin{aligned}
\|x^* - \tilde{x}^*\| &= \left\| V \left( \Sigma^\dagger - \widetilde{\Sigma}^\dagger \right) U^T b \right\| 
= \left\| \Sigma_0 A^\dagger b \right\| \\
&\leq \frac{1}{\left( \frac{\sigma_{\min}}{\lambda} \right)^2 + 1} \|x^*\|_2,
\end{aligned}
\]
where \( \sigma_{\min} \) is the smallest non-zero singular value of \( A \). This yields the relative error bound:
\[
\frac{\|x^* - \tilde{x}^*\|}{\|x^*\|} \leq \frac{1}{\left( \frac{\sigma_{\min}}{\lambda} \right)^2 + 1}.
\]

\bibliographystyle{elsarticle-num} 
\bibliography{ref} 

\end{document}